\newtheorem{theorem}{Theorem}[section]
\newtheorem*{theorem*}{Theorem}
\newtheorem*{corollary*}{Corollary}
\newtheorem*{lemma*}{Lemma} 
\newtheorem{proposition}[theorem]{Proposition} 
\newtheorem*{proposition*}{Proposition} 
\newtheorem*{assumption*}{Assumption} 
\theoremstyle{remark} 
\newtheorem{remark}[theorem]{Remark} 
\theoremstyle{definition} 
\theoremstyle{definition} 
\newtheorem*{definition*}{Definition}
\title{A dual consistent  finite difference  method with \\
narrow stencil second derivative operators}
\begin{document}
\thispagestyle{plain}

\date{}
\author{Sofia Eriksson\\
\\
{\it\small Department of Mathematics, TU Darmstadt, Germany}}

\maketitle

%%%%%%%%%%%%%%%%%%%%%%%%%%%%%%%%%%%%%%%%%%%%%%%%%%%%%%%%%%%%%%%%%%%%%%%%%%%%%%%%%%%%%%%%%%%%%%%%%%%%%%%%%%%%%%%%%%%%%%%%%%%%%%%%%%%%%%%%%%%%%%%%%%%%%%%%%%%%%%%%%%%%%%%%%%%%%%%%%%%%%%%%

% Kommentar
\definecolor{kommentar}{rgb}{.8, 0, 0.4}%
\newcommand{\kom}{\color{kommentar}\bf}

\newcommand{\PH}{H}%{P}%

 \newcommand{\JutL}{\beta}
 \newcommand{\JutR}{\alpha}
 \newcommand{\gldual}{\dual{\hspace{1pt}\gl}}
\newcommand{\grdual}{\dual{\gr}}
% \newcommand{\gldual}{-\JLdual\Rot_-^{-1}\JutL^T}
%\newcommand{\grdual}{\dual{\JR}^{}\Rot^{-1}_+\JutR^T}

%%%%%%%%%%%%%%%%%%%%%%%%%%%%%%%%%%%%%%%%%%%%%%%%%%%%%%%%%%%%%%%%%%%%%%%%%%%%%%%%%%%%%%%%%%%%%%%%%%%%%%%%%%%%%%%%

%\subsection{Notation}
% Definition widebar:

\newcount\tmpnum \newdimen\tmpdim
{\lccode`\?=`\p \lccode`\!=`\t  \lowercase{\gdef\ignorept#1?!{#1}}}
\edef\widecharS{\expandafter\ignorept\the\fontdimen1\textfont1}

\def\widebar#1{\futurelet\next\widebarA#1\widebarA}
\def\widebarA#1\widebarA{%
   \def\tmp{0}\ifcat\noexpand\next A\def\tmp{1}\fi
   \widebarE
   \ifdim\tmp pt=0pt \overline{#1}%
   \else {\mathpalette\widebarB{#1}}\fi
}

\def\widebarB#1#2{%
   \setbox0=\hbox{$#1\overline{#2}$}%
   \tmpdim=\tmp\ht0 \advance\tmpdim by-.4pt
   \tmpdim=\widecharS\tmpdim
   \kern\tmpdim\overline{\kern-\tmpdim#2}%
}
\def\widebarC#1#2 {\ifx#1\end \else 
   \ifx#1\next\def\tmp{#2}\widebarD 
   \else\expandafter\expandafter\expandafter\widebarC
   \fi\fi
}
\def\widebarD#1\end. {\fi\fi}
\def\widebarE{\widebarC A1.4 J1.2 L.6 O.8 T.5 U.7 V.3 W.1 Y.2 
   a.5 b.2 d1.1 h.5 i.5 k.5 l.3 m.4 n.4 o.6 p.4 r.5 t.4 v.7 w.7 x.8 y.8
   \alpha1 \beta1 \gamma.6 \delta.8 \epsilon.8 \varepsilon.8 \zeta.6 \eta.4
   \theta.8 \vartheta.8 \iota.5 \kappa.8 \lambda.5 \mu1 \nu.5 \xi.7 \pi.6
   \varpi.9 \rho1 \varrho1 \sigma.7 \varsigma.7 \tau.6 \upsilon.7 \phi1
   \varphi.6 \chi.7 \psi1 \omega.5 \cal1 \end. }

\def\test#1{$\let\.=#1 \.M, \.A, \.g, \.\beta, \.{\cal A}^q, \.{AB}^\sigma, 
  \.H^C, \.{\sin z}, \.W_{\!n}$}

%\test\widebar
%\test\overline

% colors %%%%%%%%%%%%%%%%%%%%%%%%%%%%%%%%%%%%%%%%%%%%%%%%%%

% independent variables
\newcommand{\dx}{\,\mathrm {d} x}
\newcommand{\ddt}{\frac{\mathrm {d} }{\mathrm {d} t}}
\newcommand{\ddtau}{\frac{\mathrm {d} }{\mathrm {d} \tau}}
\newcommand{\xl}{x_{\scalebox{.6}{$L$\hspace{1pt}}} }
\newcommand{\xr}{x_{\scalebox{.6}{$R$\hspace{1pt}}} }
%\newcommand{\xlr}{ x_{L,R}}

% beginning
\newcommand{\func}{\mathcal{J}}
\newcommand{\linop}{\mathcal{L}}
\newcommand{\inprod}[1]{\langle{#1}\rangle}%{{\color{red}\langle{#1}\rangle_\Omega}}
\newcommand{\discmark}{{\hspace{-1pt}{}_\PH}}%{h}

\newcommand{\sol}{\hspace{1pt}\mathcal{U}}%{U}%{\hspace{1pt}\mathcal{U}}
\newcommand{\force}{\mathcal{F}}%{F}%
\newcommand{\weight}{\mathcal{G}}%{G}
\newcommand{\dualvar}{\mathcal{V}}%{V}

\newcommand{\nsol}{U}%{V}
\newcommand{\fh}{F}%{F_h}
\newcommand{\gh}{G}%{G_h}
\newcommand{\dualdisc}{V}%{V}
\newcommand{\funcdisc}{J}%{V}
\newcommand{\lindisc}{L}%{V}

% hyp syst
\newcommand{\hypR}{\mathcal{R}\hspace{0.5pt}} %{{\color{red}R}}%% reaction
\newcommand{\hypA}{\mathcal{A}\hspace{0.5pt}} %{{\color{red}A}}% advection
\newcommand{\hypBL}{\mathcal{B}_{\scalebox{.7}{$L$\hspace{1pt}}} }%{{\color{red}B_{\scalebox{.7}{$L$\hspace{1pt}}} }}%
\newcommand{\hypBR}{\mathcal{B}_{\scalebox{.7}{$R$\hspace{1pt}}} }%{{\color{red}B_{\scalebox{.7}{$R$\hspace{1pt}}} }}%
\newcommand{\gl}{g_{\scalebox{.6}{$L$\hspace{1pt}}} }
\newcommand{\gr}{g_{\scalebox{.6}{$R$\hspace{1pt}}} }

% char bc (also hyp)
\newcommand{\scalingJ}{P}%{S}
\newcommand{\ratioR}{R}

\newcommand{\Rot}{\Delta}%{{\color{blue}\Lambda}}
\newcommand{\X}{Z}%{{\color{blue}X}}
\newcommand{\RL}{\ratioR_L}%{\mathcal{Q}_L}%
\newcommand{\RR}{\ratioR_R}%{\mathcal{Q}_R}%
\newcommand{\gsnokL}{\widetilde{g}_L}%{{\color{cont}\gamma_{{}_L}}}%
\newcommand{\gsnokR}{\widetilde{g}_R}%{{\color{cont}\gamma_{{}_R}}}%

\newcommand{\JL}{\scalingJ_L}%{\mathcal{P}_L}%{J_L}
\newcommand{\JR}{\scalingJ_R}%{\mathcal{P}_L}%{J_R}
\newcommand{\CL}{\mathcal{C}_L}%{{\color{klar}\mathcal{C}_L}}
\newcommand{\CR}{\mathcal{C}_R}%{{\color{jobb}\mathcal{C}_R}}%HAR BYTT TECKEN JAMFORT MED EXACT NRBC

% dual
\newcommand{\dual}[1]{\widetilde{#1}}
\newcommand{\RLdual}{\dual{\RL}}

\newcommand{\hypBLdual}{\dual{\hypBL}}
\newcommand{\hypBRdual}{\dual{\hypBR}}
\newcommand{\JLdual}{\dual{\JL}}
\newcommand{\dualCL  }{\dual{\CL }}
\newcommand{\dualCR  }{\dual{\CR }}

% discrete
\newcommand{\SigL}{\Sigma_0}
\newcommand{\SigR}{\Sigma_N}
\newcommand{\Pbar}{\hspace{2pt}\widebar{\hspace{-2pt}\PH}}%{{\color{red}\mathbb{\PH}}}%
\newcommand{\Dbar}{\widebar{D}}%{{\color{red}\mathbb{D}}}%
\newcommand{\Sbar}{\widebar{S}}%{{\color{red}\mathbb{S}}}%
\newcommand{\compA}{A_{\scalebox{.7}{$S$\hspace{1pt}}} }%{A_S}%
\newcommand{\compAsnok}{\widetilde{A}_{\scalebox{.7}{$S$\hspace{1pt}}} }
\newcommand{\Msnok}{\widetilde{M}}
\newcommand{\pert}{\delta}%{{\color{compact} p}}%
\newcommand{\RHS}{\text{RHS}}
\newcommand{\BTmat}{C}%{{\color{jobb}\mathcal{C}}}%{{\color{magenta}\Upsilon}}

% penalty
\newcommand{\cpL}{\Pi_0}%{{\color{blue}b_0}}
\newcommand{\cmL}{\Gamma_0}%{\Phi_0}%{a_0}
\newcommand{\cpR}{\Gamma_N}%{{\Phi_N}%{a_N}
\newcommand{\cmR}{\Pi_N}%{{\color{blue}b_N}}

% paraboliskt
\newcommand{\GG}{\mathcal{G}}%{{\color{jobb}\mathcal{G}_R}}
\newcommand{\factor}{\hspace{1.5pt}\mathcal{K}}%{\mathcal{S}}%{\mathcal{F}_L}

\newcommand{\paraA}{\mathcal{A}}%{{\color{blue}A}}%
\newcommand{\paraE}{\mathcal{E}}
\newcommand{\HL}{\mathcal{H}_L}%{{\color{jobb}\mathcal{H}_L}}%{{\color{blue}H_L}}%
\newcommand{\GL}{\GG_L}%{{\color{jobb}\mathcal{G}_L}}%{{\color{blue}\GG_L}}%
\newcommand{\HR}{\mathcal{H}_R}%{{\color{jobb}\mathcal{H}_R}}%{{\color{blue}H_R}}%
\newcommand{\GR}{\GG_R}%{{\color{jobb}\mathcal{G}_R}}%{{\color{blue}\GG_R}}%
\newcommand{\factorL}{\factor_L}%{\mathcal{F}_L}
\newcommand{\factorR}{\factor_R}%{\mathcal{F}_R}

\newcommand{\sizebig}{m}%{{\color{stor}n}}
\newcommand{\sizepara}{n}%{{\color{stor}m}}

% BIG widebar
\newcommand{\bigI}{\widebar{\mathcal{I}} }
\newcommand{\bigR}{\hspace{1pt}\widebar{\hspace{-1pt}\hypR}} % reaction
\newcommand{\bigA}{\hspace{3pt}\widebar{\hspace{-3pt}\hypA}} %{{\color{jobb}\hspace{2pt}\widebar{\mathcal{\hspace{-2pt}A}}}} % advection
\newcommand{\bigU}{\hspace{2pt}\widebar{\hspace{-2pt}\sol }}%{{\color{stor}\widebar{U} }}
\newcommand{\bigF}{\hspace{2.5pt}\widebar{\hspace{-2.5pt}\force\hspace{0.5pt}}\hspace{-0.5pt}} %{{\color{stor}\widebar{F} }}
\newcommand{\bigBL}{\hspace{1pt}\widebar{\hspace{-1pt}\mathcal{B}\hspace{1pt}}\hspace{-2pt}_L}
\newcommand{\bigBR}{\hspace{1pt}\widebar{\hspace{-1pt}\mathcal{B}\hspace{1pt}}\hspace{-2pt}_R}

\newcommand{\bigX}{\hspace{2pt}\widebar{\hspace{-2pt}\X}}
\newcommand{\bigRot}{\hspace{1pt}\widebar{\hspace{-1pt}\Rot\hspace{-1pt}}\hspace{1pt}}
\newcommand{\bigJL}{\hspace{2pt}\widebar{\hspace{-2pt}\scalingJ\hspace{1pt}}_{\hspace{-2pt}L}}
\newcommand{\bigJR}{\hspace{2pt}\widebar{\hspace{-2pt}\scalingJ\hspace{1pt}}_{\hspace{-2pt}R}}
\newcommand{\bigRL}{\hspace{2pt}\widebar{\hspace{-2pt}\ratioR}_L}
\newcommand{\bigRR}{\hspace{2pt}\widebar{\hspace{-2pt}\ratioR}_R}
\newcommand{\bigNL}{0_{\sizebig_+,\sizebig_0}}
\newcommand{\bigNR}{0_{\sizebig_-,\sizebig_0}}

\newcommand{\bigV}{\hspace{1pt}\widebar{\hspace{-1pt}\nsol }}%{{\color{stor}\widebar{V} }}
\newcommand{\bigFdisc}{\hspace{2pt}\widebar{\hspace{-2pt}\fh}} %
\newcommand{\bigSigL}{\widebar{\Sigma}_\noll}%{\mathbb{\SigL}}
\newcommand{\bigSigR}{\widebar{\Sigma}_N}%{\mathbb{\SigR}}

% Wide?
\newcommand{\wide}[1]{\widehat{#1}}
\newcommand{\sigltop}{\sigma_0}%{{\color{blue}\mu_1}}
\newcommand{\siglbot}{\tau_0}%{{\color{blue}\mu_2}}
\newcommand{\sigrtop}{\sigma_N}%{{\color{blue}\nu_1}}
\newcommand{\sigrbot}{\tau_N}%{{\color{blue}\nu_2}}
\newcommand{\p}{\wide{\q}}%{{\color{blue}p}}

\newcommand{\penI}{\mu}
\newcommand{\penS}{\nu}

% compact, more or less
\newcommand{\taul}{\penI_0}%{{\color{jobb}\tau_0}}
\newcommand{\sigmal}{\hspace{0pt}\penS_{\hspace{0.5pt}0\hspace{0.5pt}}}%{{\sigma_0}}
\newcommand{\taur}{\penI_N}%{{\color{jobb}\tau_N}}
\newcommand{\sigmar}{\penS_N}%{{\color{jobb}\sigma_N}}

% compact?
\newcommand{\compact}[1]{{\color{compact}\widehat{#1}}}
\newcommand{\q}{q}%{{\color{jobb}\gamma}}%
\newcommand{\wpen}{\kappa}%{\color{red}\widetilde{\nu}}%
\newcommand{\Wtilde}{\widetilde{W}}%{{\color{cyan}\mathbb{W}}}
\newcommand{\VCOMP}{\hspace{-1pt}\widetilde{\hspace{1pt}U\hspace{1pt}}\hspace{-1pt}}%{\hspace{-1pt}\widetilde{\hspace{1pt}\boldsymbol{V}\hspace{1pt}}\hspace{-1pt}}%{{\color{jobb}\hspace{-1pt}\widetilde{\hspace{1pt}\boldsymbol{V}\hspace{1pt}}\hspace{-1pt}}}%{{\color{jobb}\widetilde{\hspace{1pt}\mathbf{V}\hspace{1pt}}}}%{{\color{cyan}\widebar{\mathbb{V}}}}

% scalar
\newcommand{\al}{\alpha_{{}_L}}
\newcommand{\ar}{\alpha_{{}_R}}
\newcommand{\bl}{\beta_{{}_L}}
\newcommand{\br}{\beta_{{}_R}}
\newcommand{\rot}{\omega}%{{\color{red}\rho}}%
%\newcommand{\const}{c}

% BLANDAT
\newcommand{\noll}{{\hspace{1pt}0\hspace{0.5pt}}}

  \newcommand{\maxeig}{\rho}%{{\color{jobb}\lambda_{max}}}%{{\color{blue}\sigma}}
  \newcommand{\mineig}{\eta}%{{\color{jobb}\lambda_{min}}}%{{\color{blue}\delta}}%{{\color{blue}\nu}}

\newcommand{\BTsnok}{\widetilde{\hspace{0.5pt}\text{BT}\hspace{0.5pt}}{}^{Disc.\hspace{-1pt}}}

\newcommand{\Jfel}{\mathtt{E}}
\newcommand{\fel}{\mathtt{e}}

\newcommand{\ett}{\mathbf{1}} % 
\newcommand{\stympadetta}{\vec{1}}% en kortarte an ovanstaende

\newcommand{\storidentitet}{I}
\newcommand{\litenidentitet}{\bar{I}}

\newcommand{\Atop}{a}
\newcommand{\Avec}{\vec{a}}%{\vec{a}}
\newcommand{\Adel}{\bar{A}}
\newcommand{\Kdel}{\Adel^{-1}}

\newcommand{\stympadnolla}{\vec{0}}
\newcommand{\stornollmatris}{0}

%\newcommand{\NOLL}{\mathbf{0}}

%%%%%%

\newcommand{\nondual}[1]%{{\color{red}\hspace{-1pt}\widehat{\hspace{1pt}#1\hspace{1pt}}\hspace{-1pt}}}
{\tilde{#1}}
\newcommand{\kappasnok}{\tilde{\kappa}}

\newcommand{\scalaru}{\sol}
\newcommand{\scalarv}{\nsol}
\newcommand{\abar}{a}
\newcommand{\bbar}{b}
\newcommand{\HRdual}{\dual{\HR\hspace{-1pt}}\hspace{1pt}}
\newcommand{\ansatz}{k}

%%%%%%%%%%%%%%%%%%%%%%%%%%%%%%%%%%%%%%%%%%%%%%%%%%%%%%%%%%%%%%%%%%%%%%%%%%%%%%%%%%%%%%%%%%%%%%%%%%%%%%%%%%%%%%%%%%%%%%%%%%%%%%

\begin{abstract}
%Insert your abstract here. 

We study the numerical solutions of 
 time-dependent systems of partial differential equations, focusing on the implementation of boundary conditions. 
 The numerical method considered is a
 finite  difference scheme constructed by high order  summation by parts operators, combined with a boundary procedure using penalties (SBP-SAT). 

Recently 
it was shown that %this method can 
SBP-SAT finite difference methods can
yield superconvergent  functional output  if the boundary conditions are imposed such that the discretization is dual consistent.
We generalize these 
 results so that they include a 
 broader
  range of boundary conditions and penalty parameters.
The results are also
generalized to hold for %compact second derivative operators.
narrow-stencil
second derivative operators.
The derivations are supported by numerical experiments. 

\end{abstract}

{\bf Keywords}: Finite differences, 
 summation by parts,  simultaneous approximation term, 
 dual consistency, 
superconvergence, functionals, narrow stencil

\section{Introduction}
\label{intro}

In this paper we consider a summation by parts (SBP)
 finite difference method,  which is
combined with a penalty  technique denoted simultaneous approximation term (SAT) for the %implementation of the 
boundary conditions. 
The main advantages of the SBP-SAT finite difference methods are  high accuracy, computational efficiency and provable stability. 
 For a %review of 
 background on 
the history and %the currents state  
the newer developments 
of SBP-SAT,
see \cite{Magnus201417,DelReyFernandez2014171}.

A discrete differential operator $D_1$ is said to be 
%on SBP-form 
a SBP-operator if it can be factorized by the inverse of a positive definite matrix $\PH $ and a 
difference operator $Q$, as specified later in equation \eqref{SBPprop1}.
When $\PH $ is diagonal, $D_1$ consists of a $2p$-order accurate central difference approximation in the interior, %while 
but
at the boundaries, the accuracy is limited to $p$th order. The global accuracy of the numerical solution can then be shown to be $p+1$, see \cite{Magnus201417,ref:SVAD06}.

In many applications functionals are of interest, sometimes they are even more important than the primary solution itself (one example is  lift or drag coefficients in computational fluid dynamics).
It could be expected that functionals computed from the numerical solution would have the same order of accuracy as the solution itself.
However, recently  Hicken and Zingg \cite{HickenNo17} showed that 
when 
computing the numerical solution
in a dual consistent way, the order of accuracy of the output functional is higher than the FD solution itself, in fact, the full $2p$ accuracy can be recovered. 
Related papers are \cite{Hicken2013111,HickenNo18} which includes interesting work on SBP operators as quadrature rules and  error estimators for functional errors.
 Note 
 that this kind of superconvergent behavior
was  already known for example for finite element  and discontinuous Galerkin methods, but it had not been  proven for finite difference schemes before, see \cite{HickenNo17}.
Later  
Berg and Nordstr\"{o}m \cite{Berg20126846,Berg201341,Berg2014135}
showed that the results hold
 also for time-dependent problems.

 In \cite{HickenNo17,HickenNo18} and \cite{Berg20126846} boundary conditions of Dirichlet type are considered (in \cite{HickenNo17} Neumann boundary conditions are included but are rewritten on first order form), and  in \cite{Berg201341,Berg2014135}  boundary conditions of far-field type are derived.
In this paper, we generalize these results  by deriving penalty 
parameters
that yield dual consistency for all energy stable boundary conditions of Robin type (including the special cases Dirichlet and Neumann). 
In contrast to \cite{Berg201341,Berg2014135}, where the boundary conditions were adapted to get the penalty in a certain form, we  adapt the penalty after the boundary conditions
instead. 
Furthermore, we extend
the results  such that they hold also for narrow-stencil second derivative operators (sometimes also denoted compact second derivative operators), 
where the
term narrow  is used to define explicit finite difference schemes
with a minimal stencil width.
In fact, the results even carry over to narrow-stencil  second derivatives operators for variable coefficients (of the type considered for example in \cite{Mattsson2012}).
%To our knowledge, this has not  been done previously. 

%% \cite{KamakotiCompact,Mattsson2012} have not been considered). 

To keep things simple we consider  linear 
 problems %of reaction-advection-diffusion type 
 in one spatial dimension, however, note that this is not due to a limitation of the method.
%Note though that i
In %note that the method is not limited to this. In
  \cite{HickenNo17,HickenNo18} the extension to higher dimensions, curvilinear  grids and non-linear problems are discussed and implemented for stationary problems %, in \cite{Berg20126846} the viscous Burgers' equation is considered 
  and in \cite{Berg2014135} the %linear 
  theory is %successfully 
applied to the %fully non-linear 
time-dependent
Navier--Stokes and Euler equations in two dimensions.

The paper is organized as follows: 
In Section~\ref{SectHyp} we consider  hyperbolic systems of  partial differential equations
and derive a family of SAT parameters which guarantees
a stable and dual consistent discretization.
Since
higher order
differential equations can  always  be rewritten
as first order systems, 
this result directly leads to penalty parameters for parabolic problems, when using wide-stencil second derivative operators. Next, 
these parameters are generalized 
such that they hold also for narrow-stencil second derivative operators. This is all done in Section~\ref{SectPara}.
In Section~\ref{AppRinv} a special aspect of the stability for the narrow operators is discussed. The derivations are then followed by examples and numerical simulations in Section~\ref{ExNum} and a summary is given  in Section~\ref{conclusions}.

\subsection{Preliminaries}

We consider time-dependent partial differential equations (PDE) as
\begin{align}
\label{primalprototype}
\begin{split}
\sol _t+\linop(\sol )&=\force ,\hspace{20pt} t\in[0,T],
\hspace{20pt}x\in\Omega,
\end{split}
\end{align}
where  $\linop$ represents a linear, spatial differential operator and $\force (x,t)$ is a forcing function. For simplicity, we will assume that the sought solution $\sol (x,t)$ satisfies homogeneous initial and boundary conditions.
To derive the dual equations  we follow \cite{HickenNo17,Berg20126846,Berg201341}
 and  pose  the  problem in a variational framework:
Given a functional  $\func(\sol )=\inprod{ \weight ,\sol }$, where  $\weight (x,t)$ is  smooth weight function and where $\inprod{ \weight ,\sol }=\int_\Omega  \weight^T\sol  \dx$
refers to the standard $L^2$ inner product, we seek a function $\dualvar(x,t)$ such that $\func(\sol )=\func^*(\dualvar)=\inprod{ \dualvar,\force }$.
This defines the dual problem as
\begin{align}
\label{dualprototype}\begin{split}
\dualvar_\tau+\linop^*(\dualvar)&=\weight ,\hspace{20pt} \tau\in[0, T],
\hspace{20pt}x\in\Omega,
\end{split}
\end{align}
where $\linop^*$ is the 
adjoint  operator, 
given by $\inprod{\dualvar,\linop \sol }=\inprod{ \linop ^*\dualvar,\sol }$, 
and where
 $\dualvar$ also satisfies homogeneous initial and boundary
conditions.
Note that the dual problem actually goes "backward" in time;  the expression in \eqref{dualprototype} is obtained using the transformation $\tau=T-t$.

Let $\nsol $ and $\dualdisc$ be discrete vectors approximating $\sol $ and $\dualvar$, respectively, and let  $\fh $ and $\gh $ be projections  of $\force$ and $\weight$ onto a spatial grid. We discretize  \eqref{primalprototype}
using a stable and consistent SBP-SAT scheme, leading to
\begin{align}\label{discprototype}\begin{split}
\nsol _t+\lindisc \nsol &=\fh ,\hspace{20pt} t\in[0,T].
\end{split}
\end{align}
The SBP scheme has an associated  matrix $\PH $ which defines a discrete 
inner product, as $\inprod{ \gh,\nsol}_\discmark= \gh^T\PH \nsol $ (when $\sol $ is vector-valued,  $\PH $ must be replaced by $\Pbar$, which is defined later in the paper).
%(The relations including $\PH $ holds only if $\sol $ is scalar. When $\sol $ is vector-valued  $\PH $ must be replaced by $\Pbar$, which is defined later in the paper.)
Now 
the discrete adjoint operator 
is given by $\lindisc ^*= \PH ^{-1}\lindisc ^T\PH $, since  this leads to
$\inprod{\dualdisc,\lindisc  \nsol }_\discmark=\inprod{ \lindisc ^*\dualdisc,\nsol }_\discmark$ which mimics the continuous relation above. 

If
 $\lindisc ^*$ happens to be a consistent approximation of $\linop ^*$, 
then the discretization \eqref{discprototype}  is said to be {\it dual consistent} (if considering the stationary case) or {\it spatially dual consistent}, see \cite{HickenNo17,Berg20126846} respectively. 
When \eqref{discprototype} is a stable and dual consistent discretization of \eqref{primalprototype}, then the linear functional $\funcdisc(\nsol )=\inprod{ \gh ,\nsol }_\discmark$ is a $2p$-order accurate approximation of $\func(\sol )$, that is $\funcdisc(\nsol )=\func(\sol )+\mathcal{O}(h^{2p})$, and we thus have superconvergent functional output. 
To obtain such high accuracy it is necessary with compatible and sufficiently smooth data, 
see \cite{HickenNo17} for more details.

\section{Hyperbolic systems}

\label{SectHyp}

We start by
considering a hyperbolic system of PDEs %partial differential equations %(PDE)
of reaction-advection type, namely
\begin{align}
\label{HypSyst}
\begin{array}{rll}\sol_t+\hypR \sol + \hypA \sol _x=&\hspace{-7pt}\force ,\hspace{20pt}&x\in[\xl,\xr],\hspace{20pt}\vspace{4pt}\\\hypBL \sol =&\hspace{-7pt}\gl,&x=\xl,\vspace{4pt}\\\hypBR \sol =&\hspace{-7pt}\gr,&x=\xr,\end{array}
\end{align}
valid for $t\geq0$ and augmented with initial data 
$\sol (x,0)=\sol _0(x)$.
We let $\hypR$ and $\hypA$ be real-valued, symmetric $n\times n $ matrices.
Further,  $\hypR$ is positive semi-definite, that is $\hypR\geq0$.
The operators $\hypBL$ and $\hypBR$  
define the form of
the boundary conditions and %are %arbitrary up to the point that they are 
%assumed to give a well-posed problem. %Their properties will be specified further later.
their properties are specified in \eqref{RLrelation} 
below.
The %data in the 
forcing function $\force (x,t)$, the initial data %$f(x)$
$\sol _0(x)$ and the boundary data $\gl\hspace{-1pt}(t)$ and $\gr\hspace{-1pt}(t)$ are assumed to be
compatible and sufficiently smooth %  with the problem
 such that  the solution $\sol (x,t)$ exists.
We will refer to \eqref{HypSyst} as our primal problem.

\subsection{Well-posedness using the energy method}

We call  \eqref{HypSyst} well-posed if it has a unique solution and is stable. 
Existence is guaranteed by using the right number of boundary conditions, and uniqueness then follows from the stability, \cite{ref:NORD052,GKO}. 
Next we  show stability, using the energy method.

The PDE in the first row of \eqref{HypSyst} is multiplied by $\sol ^T$ from the left and integrated over the domain $\Omega=[\xl,\xr]$.  
Using integration by parts we obtain
\begin{align}\label{GrowthRate}
\ddt\|\sol \|^2+2\inprod{ \sol ,\hypR \sol }&=2\inprod{ \sol ,\force }+
\text{\text{BT}}_L+\text{\text{BT}}_R
\end{align}
where $\|\sol \|^2=\inprod{ \sol , \sol }=\int_{\xl}^{\xr}\sol  ^T \sol \dx$ and where
\begin{align}
\text{\text{BT}}_L=\left.\sol  ^T\hypA \sol  ^{}\right|_{\xl},&&
\text{\text{BT}}_R=-\left.\sol  ^T\hypA \sol  ^{}\right|_{\xr}.\notag
\end{align}%
To bound the growth of the solution, we must ensure that the boundary conditions make $\text{\text{BT}}_L$ and $\text{\text{BT}}_R$  non-positive for zero data. 
We consider the matrix $\hypA$ above and assume that we have found a factorization such that 
\begin{align}
\label{ALambdaX}
\hypA=\X\Rot \X^T, &&\Rot=\left[\begin{array}{ccc}\Rot^{ }_+\\&\Rot^{ }_0\\&&\Rot^{ }_-\end{array}\right],&&\X=\left[\X_+,\X_0,\X_-\right],
\end{align}
where 
$\X$ is non-singular. The parts of $\Rot$ are arranged such that $\Rot_+>0$, $\Rot_0=0$ and $\Rot_-<0$. %, but within each part, the elements do not need to be sorted.
According to Sylvester's law of inertia, the matrices $\hypA$ and $\Rot$ %will %always 
have the same number of positive ($n_+$), negative ($n_-$) and zero ($n_0$) eigenvalues (for a non-singular $\X$), where
$n=n_++n_0+n_-$.
To bound
the terms $\text{\text{BT}}_L$ and $\text{\text{BT}}_R$, we have to give $n_+$ boundary conditions at $x=\xl$ and $n_-$ boundary conditions at $x=\xr$. 
We note that %now write
\begin{align}   
\label{rot}
\hypA=\X^{ }_+\Rot^{ }_+\X_+^T+\X_-^{ }\Rot^{ }_-\X_-^T,
\end{align}
which gives
\begin{align}
\text{\text{BT}}_L
=\left.\sol ^T\hspace{-2pt}\left(\X^{ }_+\Rot^{ }_+\X_+^T+\X_-^{ }\Rot^{ }_-\X_-^T\right)\sol \right|_{\xl},&&
\text{BT}_R
=-\left.\sol ^T\left(\X^{ }_+\Rot^{ }_+\X_+^T+\X_-^{ }\Rot^{ }_-\X_-^T\right)\sol \right|_{\xr}
\notag
\end{align}
where
$\X_+^T\sol $ represents the right-going variables (ingoing at the left boundary), and $\X_-^T\sol $ represents the left-going variables (ingoing at the right boundary).
The ingoing variables are given data in terms of known functions and outgoing variables, 
as 
\begin{align}
\label{charbc}
\left.\X_+^T\sol\right|_{\xl} =\left.\gsnokL-\RL\X_-^T\sol\right|_{\xl},&&\left.\X_-^T\sol\right|_{\xr} =\left.\gsnokR-\RR\X_+^T\sol\right|_{\xr},
\end{align}
where $\gsnokL$, $\gsnokR$ are the known data and
where the matrices $\RL$ and $\RR$
must be sufficiently small. %, see below.
Using the boundary conditions %
in \eqref{charbc},
the boundary terms $\text{BT}_L$ and $\text{BT}_R$ become
\begin{align}
\label{ContBT}
\begin{split}
\text{BT}_L
&=\hspace{10pt}\left.\sol ^T\X_-^{ }\left(\Rot^{ }_-+\RL^T\Rot^{ }_+\RL\right)\X_-^T\sol \right|_{\xl}
\hspace{2pt}-\left.2\gsnokL^T\Rot^{ }_+\RL\X_-^T\sol \right|_{\xl}
\hspace{2pt}+\gsnokL^T\Rot^{ }_+\gsnokL
\\
\text{BT}_R&=-\left.\sol ^T\X_+^{ }\left(\Rot^{ }_++\RR^T\Rot^{ }_-\RR\right)\X_+^T\sol \right|_{\xr}
+\left.2
\gsnokR^T\Rot^{ }_-\RR\X_+^T\sol \right|_{\xr}
-\gsnokR^T\Rot^{ }_-\gsnokR.\end{split}
\end{align}
We define
\begin{align*}
\CL =\Rot^{ }_-+\RL^T\Rot^{ }_+\RL,&&\CR =-\Rot^{ }_+-\RR^T\Rot^{ }_-\RR
\end{align*}
and note that
% see that 
if  $\CL$,   $\CR\leq0$,  the boundary terms in \eqref{ContBT} will be non-positive for zero data. 
By integrating \eqref{GrowthRate} in time 
we can now obtain %an energy estimate.
a bound on $\|\sol \|^2$.
With boundary conditions on the form \eqref{charbc}, we also know that  the correct number of boundary conditions are specified at each boundary, which yields existence.
Our problem is thus well-posed.

To relate the original
 boundary conditions %
 in \eqref{HypSyst}
 to  the ones in \eqref{charbc},
 we let
\begin{align}
\label{RLrelation}
\hypBL=\JL(\X_+^T+\RL\X_-^T),\hspace{30pt}\hypBR=\JR(\X_-^T+\RR\X_+^T),
\end{align}
where $\JL$ and $\JR$ are invertible %non-singular 
scaling and/or permutation matrices.
The data in \eqref{charbc} is identified as $\gsnokL=\JL^{-1}\gl$ and $\gsnokR=\JR^{-1}\gr$.
We assume that the boundary conditions in \eqref{HypSyst} are properly chosen such that
$\RL$ and $\RR$ are sufficiently small and hence $\CL$,
$\CR \leq0$.

\begin{remark}
Note that the energy method is a sufficient but not necessary condition for stability
and that it is rather restrictive with respect to the admissible boundary conditions.
By rescaling
 the problem we could allow $\RL$ and $\RR$ to be larger,
see \cite{KL,GKO}.
We will not consider this complication but simply require that $\CL\leq0$, $\CR\leq0$. 
\end{remark}

\begin{remark}
\label{remarkBound}

In the homogeneous case, %(i.e. if $\gl$, $\gr$ and $\force $ are zero) 
with boundary conditions such that $\CL$, $\CR \leq0$,
the growth rate in
\eqref{GrowthRate} 
becomes $\ddt\|\sol \|^2\leq0$.
Integrating this in time we obtain the energy estimate $\|\sol \|^2\leq\|\sol _0\|^2$
%\begin{align*}\ddt\|\sol \|^2&\leq0&&\Longrightarrow&&\|\sol \|^2\leq\|\sol _0\|^2\end{align*}
and %the problem 
\eqref{HypSyst} is %thus 
well-posed.
Since \eqref{HypSyst} is an one-dimensional hyperbolic problem it is also possible to show strong well-posedness, i.e. that $\|\sol \|$ is bounded by the data $\gl$, $\gr$, $\force $ and $\sol _0$. See \cite{KL,GKO} for different definitions of well-posedness.
\end{remark}

\subsection{The semi-discrete problem}

We discretize  in space 
  using $N+1$ equidistant grid points 
$x_i=\xl+hi$, where $h=(\xr-\xl)/N$ and $i=0,1,\hdots,N$. 
The semi-discrete scheme approximating \eqref{HypSyst} 
 is written
\begin{align}
\label{HypSystDisc}
\begin{split}\nsol _t+(I_N\otimes \hypR)\nsol +(D_1\otimes \hypA)\nsol =\fh &+(\PH ^{-1}e_\noll \otimes\SigL )(\hypBL{\nsol }_\noll -\gl)\\
&+(\PH ^{-1}e_N \otimes\SigR )(\hypBR{\nsol }_N -\gr),
\end{split}
\end{align}
where
$\nsol =[\nsol _0^T,\nsol _1^T,\hdots,\nsol _N^T]^T$ is a
 vector  of length $n(N+1)$, such that   $\nsol _i(t)\approx \sol (x_i,t)$,
 and
where 
$\fh_i(t)=\force (x_i,t)$. 
The symbol $\otimes$ refers to the Kronecker product. 
The finite difference operator $D_1$ approximates 
$\partial/\partial x$ and
satisfies the SBP-properties
\begin{align}\label{SBPprop1}
\begin{array}{lclcl}\vspace{4pt}
D_1=\PH ^{-1}Q,&&\hspace{5.5pt}\PH =\PH ^T>0,&&Q+Q^T=E_N-E_0\\
\end{array}
\end{align}
where  $E_0=e_0e_0^T$, $E_N=e_Ne_N^T$ and 
$e_0=[1, 0, \hdots, 0]^T$ and $e_N=[ 0, \hdots, 0, 1]^T$.
Note that
$\nsol _0=(e_0^T\otimes I_n)\nsol $ and $\nsol _N=(e_N^T\otimes I_n)\nsol $.
By
$I_N$ and $I_n$
we refer to
 %are
  identity matrices of size $N+1$ and $n$, respectively.
The boundary conditions %in \eqref{HypSyst} 
are imposed %weakly %in \eqref{HypSystDisc} 
using the %Simultaneous Approximation Term technique (SAT)
SAT technique
which is a penalty method.
The penalty parameters $\SigL$ and $\SigR$ in \eqref{HypSystDisc} 
%are matrices of size $n\times n_+$ and $n\times n_-$, respectively,
are at this point unknown, but  are derived in the next subsections and presented in 
Theorem~\ref{HypSystPen}.

In this paper, 
we require %only consider diagonal $\PH $ matrices, 
that $\PH $ is diagonal,
and in this case $D_1$ consists of a $2p$-order accurate central difference approximation in the interior %while at the boundaries the difference approximations are one-sided, with $p$th order of accuracy. 
and one-sided, $p$-order 
accurate approximations at the boundaries.
Examples  of SBP
 operators 
can  be found  in \cite{ref:STRA94, Mattsson2004503}. 
For % a read-up on
more details about
SBP-SAT, see \cite{ref:SVAD06} and references therein.

\subsection{Numerical stability using the energy method}

Just as in the continuous case we use the energy method to show stability.
We multiply \eqref{HypSystDisc} by %$V^T\Pbar_n$
$\nsol ^T\Pbar$
 from the left, where $\Pbar=\PH \otimes I_n$, and then add the  transpose of the result. % to itself. %As in the continuous case we  omit the forcing function $\fh $ for simplicity.
Thereafter using the
SBP-properties in \eqref{SBPprop1} 
we obtain
\begin{align*}
\ddt\| \nsol \|^2_\discmark+2{\nsol }^T(\PH \otimes \hypR){\nsol }&=2\inprod{\nsol , \fh }_\discmark+\text{BT}_L^{Disc.}+\text{BT}_R^{Disc.},
\end{align*}
 where
 $\|\nsol \|^2_\discmark=\inprod{ \nsol ,\nsol }_\discmark=\nsol ^T\Pbar \nsol $
is the discrete $L^2$-norm and where
 \begin{align}\label{HypBTInnan}
\begin{split}
\text{BT}_L^{Disc.}&=\nsol _\noll^T\left(\hspace{9pt}\hypA+\SigL \hspace{2pt} \hypBL\hspace{1pt}+\hspace{1pt}\hypBL^T\SigL^T \hspace{3pt}\right){\nsol }_\noll\hspace{2pt} -\nsol _\noll^T \SigL   \hspace{2pt}\gl  \hspace{1pt}-   \gl^T\SigL^T \hspace{2pt}\nsol _\noll,\\
\text{BT}_R^{Disc.}&=\nsol _N^T\left(- \hypA+\SigR \hypBR+\hypBR^T\SigR^T\right) \nsol _N-\nsol _N^T\SigR \gr-   \gr^T\SigR^T\nsol _N.
\end{split}
\end{align}
We define
$\BTmat_0= \hypA+\SigL  \hypBL+\hypBL^T\SigL^T$ and $ \BTmat_N= - \hypA+\SigR \hypBR+\hypBR^T\SigR^T$.
For stability %\eqref{HypSystDisc} to be stable
 $\text{BT}_L^{Disc.}$ and $\text{BT}_R^{Disc.}$ must be non-positive for zero boundary
  data, i.e. $\BTmat_0\leq0$ and $ \BTmat_N\leq0$. 
 We make the following ans\"{a}tze for the penalty parameters:
\begin{align}\label{HypPrimalPenAnsatz}
\SigL&=(\X_+\cpL+\X_-\cmL)\JL^{-1},
&
\SigR&=(\X_+\cpR+\X_-\cmR)\JR^{-1}.
\end{align}
Taking the left boundary as example 
and
using \eqref{rot}, \eqref{RLrelation} and \eqref{HypPrimalPenAnsatz} 
we obtain %
  \begin{align}\label{stabXi0}
  \BTmat_0
  &=\left[\begin{array}{c}\X_+^T\\\X_-^T\end{array}\right]^T\left[\begin{array}{cc}\Rot_++\cpL+\cpL^T&\cpL\RL+\cmL^T\\
\cmL+  \RL^T\cpL^T&\Rot_-+\cmL\RL+\RL^T\cmL^T\end{array}\right]\left[\begin{array}{c}\X_+^T\\\X_-^T\end{array}\right].
\end{align}

 \subsection{The dual problem}

Given the functional  %
$\func(\sol )=\inprod{ \weight ,\sol }  $, 
the
dual problem of \eqref{HypSyst} is
\begin{align}
\label{HypSystDual}
\begin{array}{rll}\dualvar_\tau+\hypR \dualvar- \hypA \dualvar_x=&\hspace{-7pt}\weight ,\hspace{20pt}&x\in[\xl,\xr],\hspace{20pt}\vspace{4pt}\\
\dual{\hypBL} \dualvar=&\hspace{-7pt}\gldual,&x= \xl,\vspace{4pt}\\
\dual{\hypBR} \dualvar=&\hspace{-7pt}\grdual,&x= \xr,
\end{array}
\end{align}
which holds for $\tau\geq0$ and is complemented  with the initial condition $\dualvar(x,0)=\dualvar_0(x)$.
The boundary 
operators in \eqref{HypSystDual} %can be written 
have the form
 \begin{align} \label{RLrelationDual} \hypBLdual=\JLdual(\X_-^T+\RLdual\X_+^T),&& \hypBRdual=\dual{\JR}(\X_+^T+\dual{\RR}\X_-^T), \end{align}
where $\JLdual$ and $\dual{\JR}$ are arbitrary invertible matrices and
 $\RLdual$ and $\dual{\RR}$ depend on the primal boundary conditions as
 \begin{align}\label{PrimalDualBCrel}
 \RLdual=-\Rot_-^{-1}\RL^T\Rot_+,&&
 \dual{\RR}=-\Rot_+^{-1}\RR^T\Rot_-.\end{align}
The claim that \eqref{HypSystDual}, \eqref{RLrelationDual} and \eqref{PrimalDualBCrel} describes the dual problem %to \eqref{HypSyst} -- with respect to $\func(\sol )=\inprod{ \weight ,\sol }$ -- 
is motivated below:
Using the notation in \eqref{primalprototype} and \eqref{dualprototype} we identify the spatial operators of \eqref{HypSyst} and \eqref{HypSystDual} as
\begin{align}
\label{LbothHyp}
%\label{LprimalHyp}
\linop=\hypR+\hypA\frac{\partial}{\partial x},&&
%\label{LdualHyp}
\linop^*=\hypR-\hypA\frac{\partial}{\partial x},
\end{align}
respectively.
For \eqref{HypSystDual} to be the dual problem of \eqref{HypSyst}, 
$\linop $ and $\linop ^*$ must fulfill the relation $\inprod{\dualvar,\linop \sol }=\inprod{ \linop ^*\dualvar,\sol }$.
Using integration by parts we obtain
\begin{align}
\inprod{\dualvar,\linop \sol }
&=\inprod{ \linop ^*\dualvar,\sol }+[\dualvar^T\hypA \sol ]_{\xl}^{\xr}\notag%\\
\end{align}
and %
we see that
$\dualvar ^T\hypA \sol  ^{}$
must be zero at both boundaries.
(The boundary conditions for the dual problem
are defined as the minimal set of homogeneous conditions such that all boundary terms vanish after  that that the  homogeneous boundary conditions for the primal problem have been applied, see \cite{Berg20126846}.)
Using the boundary conditions of the primal problem, \eqref{charbc}, %
followed by the dual boundary conditions, \eqref{HypSystDual}, \eqref{RLrelationDual}, yields  (for zero data)
\begin{align}%\widehat{\text{BT}}_L
\left.\dualvar ^T\hypA \sol  ^{}\right|_{\xl}&=\left.-\dualvar^T\X^{ }_+\left(\Rot^{ }_+\RL+\RLdual^T\Rot^{ }_-\right)\X_-^T\sol \right|_{\xl}\notag\\%\widehat{\text{BT}}_R&=\left.\dualvar^T\X^{ }_-\left(\dual{\RR}^T\Rot^{ }_++\Rot^{ }_-\RR\right)\X_+^T\sol \right|_{\xr}\notag\\
\left.\dualvar ^T\hypA \sol  ^{}\right|_{\xr}&=-\left.\dualvar^T\X^{ }_-\left(\dual{\RR}^T\Rot^{ }_++\Rot^{ }_-\RR\right)\X_+^T\sol \right|_{\xr}\notag\end{align}and % we see that
 if 
\eqref{PrimalDualBCrel} holds, %is necessary to make
then  %\eqref{HypSystDual}, \eqref{RLrelationDual} and \eqref{PrimalDualBCrel} is indeed the dual problem to \eqref{HypSyst}.
$\dualvar ^T\hypA \sol  ^{}=0$
 at both boundaries and the above claim is confirmed.

 \begin{remark}
A functional can also include outgoing solution terms from the boundary, as $\func(\sol )=\inprod{ \weight ,\sol } +\JutR\Rot_+^{ }\X_+^T\sol |_{\xr}+\JutL\Rot_-^{ }\X_-^T\sol |_{\xl} $.  This would specify the  boundary data in \eqref{HypSystDual} to
 $\gldual=-\JLdual\JutL^T$ and $\grdual=\dual{\JR}^{}\JutR^T$, compare with \cite{HickenNo17}. For simplicity we  consider %the homogeneous case
 $\func(\sol )=\inprod{ \weight ,\sol }$ %such
 which means that
actually, the boundary data in \eqref{HypSystDual}  is zero.
 \end{remark}

\subsubsection{Well-posedness of the dual problem}

The growth rate for the dual problem is given by
\begin{align*}
\frac{d}{d\tau}\|\dualvar \|^2+2\inprod{\dualvar,\hypR \dualvar }=\text{BT}_L^{dual}+\text{BT}_R^{dual}
\end{align*}
where
 the boundary terms are 
 (the homogeneous boundary conditions  have been applied)
\begin{align*}\text{BT}_L^{dual}=\dualvar^T\X^{ }_+\dualCL  \left.\X_+^T \dualvar\right|_{\xl},&&\text{BT}_R^{dual}=\dualvar^T\X_-^{ }\dualCR \left.\X_-^T \dualvar\right|_{\xr} \end{align*}
and
where
$\dualCL  =-\Rot^{ }_+-\Rot_+\RL\Rot_-^{-1}\RL^T\Rot_+$ and $\dualCR =\Rot^{ }_- +\Rot_-\RR\Rot_+^{-1}\RR^T\Rot_-$.
For well-posedness of the dual problem $\dualCL  \leq0$ and $\dualCR \leq0$ are necessary.

Recall that the primal problem is well-posed if $\CL $, $\CR \leq0$. %It can be shown that 
The dual demand
$\dualCL  \leq0$ is directly fulfilled if $\CL \leq0$ and %, correspondingly for the right boundary, %that
 $\dualCR \leq0$ follows from $\CR \leq0$.
(In the special case when $\RL,\RR$ are square, invertible matrices, this is trivial. For general $\RL,\RR$ it can be shown
with the help of Sylvester's determinant theorem.)
We conclude that the dual problem \eqref{HypSystDual} with \eqref{RLrelationDual}, \eqref{PrimalDualBCrel} is well-posed if the primal problem \eqref{HypSyst} with \eqref{RLrelation} is well-posed.

\begin{remark}

In \cite{Berg201341,Berg2014135}
the  dual consistent schemes are constructed by first designing the boundary conditions (for incompletely parabolic problems) such that both the primal and the dual problem are well-posed. 
Their different approach can partly be explained by 
their wish to have the boundary conditions in the special form $H_{L,R}U \mp BU_x=G_{L,R}$.
Looking e.g. at Eq. (30) in \cite{Berg201341}, we note that after applying the boundary conditions, $U ^TM_LU \geq0$ is needed for stability. However, if $B$ is singular, %special form of applying the boundary conditions (
replacing  $BU _x$ by $\pm H_{L,R}U $ %)
 does not guarantee that all conditions have been completely used, and $u$ and $p$ in $U =[p,u]^T$ in $U ^TM_LU$ can be linearly dependent. Therefore the demand $M_L\geq0$ in (31) is  unnecessarily strong and gives some extra restrictions on the boundary conditions.
\end{remark}

\subsubsection{Discretization of the dual problem}

The semi-discrete scheme approximating the dual problem \eqref{HypSystDual} 
 is written 
\begin{align}
\label{HypSystDiscDual}
\begin{split}\dualdisc_\tau+(I_N\otimes \hypR)\dualdisc-(D_1\otimes \hypA)\dualdisc=\gh &+(\PH ^{-1}e_0 \otimes\dual{\SigL} )(\dual{\hypBL}{\dualdisc}_0 -\gldual)\\
&+(\PH ^{-1}e_N \otimes\dual{\SigR} )(\dual{\hypBR}\dualdisc_N -\grdual),%\\\dualdisc(0)&=\dual{f_h}.
\end{split}
\end{align}
where   %the discrete 
 $\dualdisc_i(\tau)$ represents $\dualvar(x_i,\tau)$.
The SAT parameters $\dual{\SigL}$ and $\dual{\SigR}$ 
are yet unknown.

\subsection{Dual consistency}
\label{SecDualConsHyp}

The semi-discrete scheme \eqref{HypSystDisc} is rewritten as $\nsol _t+\lindisc \nsol =\RHS$,
where  
\begin{align}
\lindisc 
&=(I_N\otimes \hypR)+(D_1\otimes \hypA)-(\PH ^{-1}E_0 \otimes\SigL  \hypBL )-(\PH ^{-1}E_N \otimes\SigR \hypBR)\notag\end{align}
and where % $\RHS =\fh -(\PH ^{-1}e_0 \otimes\SigL ) \gl-(\PH ^{-1}e_N \otimes\SigR )\gr$
$\RHS$ only depends on known data.
In contrast to the  continuous counterpart $\linop$, $\lindisc $ %,  
 includes the boundary conditions explicitly. 
 According to \cite{Berg201341},
the discrete adjoint operator is %(see \cite{Berg201341}) 
given by $\lindisc ^*= \Pbar^{-1}\lindisc ^T\Pbar$, 
which, using \eqref{SBPprop1}, leads to
\begin{align}
\label{Lh*comp}\begin{split}
\lindisc ^*
&\hspace{-1pt}=\hspace{-1pt}(I_N\otimes \hypR)\hspace{-1pt}-\hspace{-1pt}(D_1\otimes \hypA)\hspace{-1pt}-\hspace{-1pt}(\PH ^{-1}E_0 \otimes \hypBL^T\SigL ^T+\hypA )\hspace{-1pt}-\hspace{-1pt}(\hspace{-1pt}\PH ^{-1}E_N \otimes \hypBR^T\SigR^T\hspace{-1pt}-\hypA)\end{split}
\end{align}
If $\lindisc ^*$  is a consistent approximation of $\linop ^*$ in \eqref{LbothHyp}, then the scheme \eqref{HypSystDisc} is dual consistent.
Looking at \eqref{HypSystDiscDual}, we see that $\lindisc ^*$ must have the form
\begin{align}
\label{Lh*goal}
(\lindisc ^*)^{goal}
&=(I_N\otimes \hypR)-(D_1\otimes \hypA)-(\PH ^{-1}E_0 \otimes\dual{\SigL} \dual{ \hypBL} )-(\PH ^{-1}E_N \otimes\dual{\SigR}\dual{ \hypBR}).\end{align}
Thus we   have dual consistency if the expressions in \eqref{Lh*comp} and \eqref{Lh*goal} are equal.
 This gives us the following requirements:
\begin{align}
 \hypBL^T\SigL ^T+\hypA-\dual{\SigL} \dual{ \hypBL}=0&
&
  \hypBR^T\SigR^T-\hypA-\dual{\SigR}\dual{ \hypBR}=0.\notag
\end{align}
Similarly to
the penalty parameters \eqref{HypPrimalPenAnsatz} for the primal problem,
we make the ans\"{a}tze
\begin{align}\label{HypDualPenAnsatz}
\dual{\SigL}&=(\X_+\dual{\cmL}+\X_-\dual{\cpL})\dual{\JL}^{-1}&\dual{\SigR}&=(\X_+\dual{\cmR}+\X_-\dual{\cpR})\dual{\JR}^{-1}
\end{align}
for the penalty parameters of the dual problem.
We consider the left boundary
and
use
\eqref{HypPrimalPenAnsatz} and \eqref{HypDualPenAnsatz},
% \eqref{HypPenAnsatz},
together with
\eqref{rot},
 \eqref{RLrelation} and
  \eqref{RLrelationDual},
%\eqref{PrimalDualBCrel}
to write
\begin{align*}
\hypBL^T\SigL ^T+\hypA-\dual{\SigL} \dual{ \hypBL}
&=\hspace{-1pt}\left[\hspace{-2pt}\begin{array}{c}\X_+^T\\\X_-^T\end{array}\hspace{-2pt}\right]^T\hspace{-2pt}\left[\hspace{-1pt}\begin{array}{cc}\Rot^{ }_++\cpL^T-\dual{\cmL}\RLdual&\cmL^T-\dual{\cmL}\\\RL^T\cpL^T-\dual{\cpL}\RLdual&\Rot^{ }_-+\RL^T\cmL^T-\dual{\cpL}\end{array}\hspace{-1pt}\right]\hspace{-2pt}\left[\hspace{-2pt}\begin{array}{c}\X_+^T\\\X_-^T\end{array}\hspace{-2pt}\right]
\end{align*}
which is zero if 
and only if
the four entries of the matrix 
%above
 are zero.
These four demands are rearranged 
to the more convenient form
\begin{subequations} 
\label{kravL}
\begin{align}
\label{kravLnyprim}\cpL&=-\Rot^{ }_+-\Rot^{ }_+\RL\Rot^{-1 }_-\cmL\\
\label{kravLnycont}\RLdual&=-\Rot^{ -1}_-\RL^T\Rot^{ }_+\\
\label{kravLnyrel}\dual{\cmL}&=\cmL^T\\
\label{kravLnydual}\dual{\cpL}&=\Rot^{ }_--\Rot^{ }_-\RLdual\Rot^{-1 }_+\dual{\cmL}.
\end{align}
\end{subequations}
Note that \eqref{kravLnyprim} only depends on parameters from  the primal problem, 
while \eqref{kravLnydual} only depends on parameters from  the dual problem.
Interestingly enough, \eqref{kravLnycont} is %in fact 
nothing %less than
but
the duality demand  \eqref{PrimalDualBCrel} for the continuous problem.
The demand \eqref{kravLnyrel}  relates the penalty of the dual problem to the primal penalty.

Unless we actually want to solve the dual problem, it is enough to consider the first demand,
\eqref{kravLnyprim}. 
We repeat the above derivation also for the right boundary and get the following result: 
The penalty parameters %
$\SigL$ and $\SigR$
% in \eqref{HypPenAnsatz} 
in  \eqref{HypPrimalPenAnsatz}  with
\begin{align}\label{DualOnlyPrimal}
\cpL=-\Rot^{ }_+-\Rot^{ }_+\RL\Rot^{-1 }_-\cmL,&&\cmR=\Rot^{ }_--\Rot_-\RR\Rot_+^{-1}\cpR,
\end{align}
makes the discretization \eqref{HypSystDisc} dual consistent.

\begin{remark}

If the discrete primal problem \eqref{HypSystDisc} is dual consistent there is no need to check if the discrete dual problem \eqref{HypSystDiscDual} is stable -- in \cite{HickenNo18} it is stated that
stability of the primal
problem implies stability of the dual problem, because the system matrix for the dual problem is the transpose of the system
matrix for the primal problem -- 
that is
 the primal and dual discrete problems have exactly the same growth rates for zero data.
\end{remark}

\subsection{Penalty parameters for the hyperbolic problem}

Consider the %
penalty parameter ansatz for the left boundary,
 $\SigL=(\X_+\cpL+\X_-\cmL)\JL^{-1}$, which is given in
\eqref{HypPrimalPenAnsatz}.
 From a stability point of view, we must choose $\cpL$ and $\cmL$  such that
$\BTmat_0$  in \eqref{stabXi0} 
becomes non-positive. %negative semi-definite.
In addition, %the duality 
for dual consistency the
constraint in \eqref{DualOnlyPrimal} must be %taken into consideration.
fulfilled. 
By inserting the duality constraint $\cpL=-\Rot^{ }_+-\Rot^{ }_+\RL\Rot^{-1 }_-\cmL$ from \eqref{DualOnlyPrimal} into $\BTmat_0$ %\eqref{stabXi0} %
we obtain, after some rearrangements, the expression
  \begin{align*} \BTmat_0  
  &\hspace{-1pt}=\hspace{-1pt}
  \left[\hspace{-4pt}\begin{array}{c}\JL^{-1}\hypBL\\\X_-^T\end{array}\hspace{-4pt}\right]^T\hspace{-1pt}
 \left[\hspace{-2pt}\begin{array}{cc}-\Rot_+-\Rot^{ }_+\RL\Rot^{-1 }_-\cmL-(\Rot^{ }_+\RL\Rot^{-1 }_-\cmL)^T&\cmL^T\Rot^{-1 }_-\CL \\\CL \Rot^{-1 }_-\cmL&\CL 
\end{array}\hspace{-2pt}\right]\hspace{-1pt}
 \left[\hspace{-4pt}\begin{array}{c}\JL^{-1}\hypBL\\\X_-^T\end{array}\hspace{-4pt}\right].\end{align*}
The most obvious strategy to make $\BTmat_0\leq0$ is to cancel the off-diagonal entries by putting
$\cmL=0$,
but note that other choices exist. 
To single out the %optimal 
  optimal  (in a certain sense)
 candidate, we use another approach.
With \eqref{rot}, \eqref{RLrelation} and $\gsnokL=\JL^{-1}\gl$,
the left boundary term in \eqref{HypBTInnan} can be rearranged as
\begin{align}\label{BTrearranged}
\begin{split}
\text{BT}_L^{Disc.}
&=\nsol _0^T\X_-^{ }\left(\Rot^{ }_-+\RL^T\Rot^{ }_+\RL\right)\X_-^T{\nsol }_0-2\gsnokL^T\Rot^{ }_+\RL\X_-^T\nsol _0+\gsnokL^T\Rot^{ }_+\gsnokL\\&-\left(\hypBL \nsol _0-\gl\right)^T\JL^{-T}\Rot^{ }_+\JL^{-1}\left(\hypBL \nsol _0-\gl\right)\\&+2\left(\hypBL \nsol _0-   \gl\right)^T\left(\SigL+\X_+\Rot^{ }_+\JL^{-1}\right)^T\nsol _0
\end{split}
\end{align}
where we see that the first row corresponds exactly to the continuous boundary term  $\text{BT}_L$ in \eqref{ContBT}.
The second row is a damping term that is quadratically proportional 
%the square of 
to the %error 
solution's deviation from data
%in the boundary point. 
at the boundary, $\hypBL \nsol _0-\gl$.
The term in the last row is only linearly proportional to this deviation, so we would prefer  it to be zero. This is possible if the penalty parameter is chosen exactly as $\SigL=-\X_+\Rot^{ }_+\JL^{-1}$. %, that is $\cpL=-\Rot_+$ and $\cmL=0$. 
Luckily  this choice  fulfills
both the stability requirement and
 the duality constraint. 
We repeat the above derivation also for the right boundary and 
summarize our findings in Theorem~\ref{HypSystPen}.
\begin{theorem}
\label{HypSystPen}
Consider the problem \eqref{HypSyst} with an associated factorization  \eqref{ALambdaX} where $\X$ is non-singular. %Given that \eqref{HypSyst} is well-posed, t
With the particular choice of 
penalty parameters
\begin{align}\label{OptPenHyp}
\SigL&=-\X_+\Rot^{ }_+\JL^{-1}
,&
\SigR&=\X_-\Rot^{ }_-\JR^{-1},
\end{align}
%will make 
the scheme \eqref{HypSystDisc} 
is 
a stable and dual consistent discretization of  \eqref{HypSyst}.
The matrices $\JL$ and $\JR$ are specified through \eqref{RLrelation}.
%\end{proposition}
\end{theorem}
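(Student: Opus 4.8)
The plan is to verify the two defining properties—stability and dual consistency—separately, reducing each to a substitution into results already obtained above. The first step is to write the penalties \eqref{OptPenHyp} in the form of the ansatz \eqref{HypPrimalPenAnsatz}. Since $\X=[\X_+,\X_0,\X_-]$ is non-singular, the decomposition of $\SigL\JL$ and $\SigR\JR$ along the columns of $\X$ is unique, so matching $\SigL=-\X_+\Rot_+\JL^{-1}$ and $\SigR=\X_-\Rot_-\JR^{-1}$ against \eqref{HypPrimalPenAnsatz} forces
\[
\cpL=-\Rot_+,\qquad \cmL=0,\qquad \cpR=0,\qquad \cmR=\Rot_-.
\]

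Dual consistency is then immediate: substituting $\cmL=0$ and $\cpR=0$ into the two identities of \eqref{DualOnlyPrimal} annihilates the terms $\Rot_+\RL\Rot_-^{-1}\cmL$ and $\Rot_-\RR\Rot_+^{-1}\cpR$, so \eqref{DualOnlyPrimal} collapses to $\cpL=-\Rot_+$ and $\cmR=\Rot_-$, which hold by the choice above. Hence \eqref{HypSystDisc} with \eqref{OptPenHyp} is dual consistent by the derivation of Section~\ref{SecDualConsHyp}.

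For stability I would take the left boundary as the model case and use the general rearrangement \eqref{BTrearranged}, which holds for any $\SigL$. With $\SigL=-\X_+\Rot_+\JL^{-1}$ the factor $\SigL+\X_+\Rot_+\JL^{-1}$ multiplying the linear-in-deviation term in the last row of \eqref{BTrearranged} vanishes identically, leaving the continuous boundary term $\text{BT}_L$ together with a manifestly nonpositive damping term. For zero data this gives $\text{BT}_L^{Disc.}=\nsol_0^T\big(\X_-\CL\X_-^T-\hypBL^T\JL^{-T}\Rot_+\JL^{-1}\hypBL\big)\nsol_0\le0$, using the primal well-posedness assumption $\CL\le0$ and $\Rot_+>0$; equivalently, inserting $\cpL=-\Rot_+,\ \cmL=0$ into \eqref{stabXi0} and completing the square in $\X_+^T\nsol_0+\RL\X_-^T\nsol_0$ yields the same bound $\BTmat_0\le0$. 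The right boundary is the mirror computation, and this is the only step requiring care: in $\BTmat_N$ the matrix $\hypA$ enters with the opposite sign and $\Rot_-<0$, so one must check that it is $\SigR=\X_-\Rot_-\JR^{-1}$—and not its negative—that both cancels the linear term in the right-boundary analogue of \eqref{BTrearranged} and leaves the nonpositive damping term $-\hypBR^T\JR^{-T}(-\Rot_-)\JR^{-1}\hypBR$; combined with $\text{BT}_R=\nsol_N^T\X_+\CR\X_+^T\nsol_N\le0$ (from $\CR\le0$) this gives $\BTmat_N\le0$. Together with the discrete energy identity preceding \eqref{HypBTInnan}, integrated in time, stability follows and the theorem is proved. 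I expect the sign bookkeeping at the right boundary to be the main—though modest—obstacle; everything else is a direct substitution into the preceding development.
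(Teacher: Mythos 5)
Your proof is correct and follows essentially the same route as the paper: identify $\cpL=-\Rot_+$, $\cmL=0$ (and the mirror choice on the right), verify the duality constraint \eqref{DualOnlyPrimal} by direct substitution, and establish stability from $\BTmat_0=\X_-\CL\X_-^T-\hypBL^T\JL^{-T}\Rot_+\JL^{-1}\hypBL\leq0$. Your additional remarks on the vanishing of the linear deviation term in \eqref{BTrearranged} and the sign bookkeeping at the right boundary are consistent with the paper's preparatory discussion and add nothing erroneous.
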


\begin{proof}
Comparing  with \eqref{HypPrimalPenAnsatz}, we note that 
%the penalty parameter 
$\SigL$
 in \eqref{OptPenHyp} % in this case 
is obtained using $\cpL=-\Rot_+$ and $\cmL=0$. % for the left boundary, and $\cmR=\Rot_-$,  $\cpR=0$ for the right boundary. They thus
These values 
fulfill the 
left
duality constraint %$\cpL=-\Rot^{ }_+-\Rot^{ }_+\RL\Rot^{-1 }_-\cmL$ 
in \eqref{DualOnlyPrimal}.
Inserting $\cmL=0$ into %\eqref{stabXi0} and using the relation \eqref{RLrelation}, 
 $ \BTmat_0$ above,
we obtain
 $ \BTmat_0=\X_-\CL\X_-^T-\hypBL^T\JL^{-T}\Rot_+\JL^{-1}\hypBL$, which is negative semi-definite if the continuous problem is well-posed (in the %restrictive
  $\CL\leq0$ sense).
Thus the stability demand   $\BTmat_0\leq0$ is fulfilled. 
The same  is done for the right boundary, completing the proof.
\end{proof}

\begin{remark} \label{remarkNotDep}The seemingly very specific choice of penalty parameters in Theorem~\ref{HypSystPen}
 is, in fact, a family of penalty parameters, depending on the factorization used.
Note that it is not  
 necessary to use the same factorization for the left and the right boundary.
\end{remark}

\begin{remark}
If characteristic boundary conditions (in the sense $\RL, \RR=0$) 
are used, %such that $\hypBL=\JL\X_+^T$ in \eqref{RLrelation}
the  scheme  \eqref{HypSystDisc} together 
with the SATs from
Theorem~\ref{HypSystPen} simplifies to
\begin{align*}
\begin{split}\nsol _t+(I_N\otimes \hypR)\nsol +(D_1\otimes \hypA)\nsol &=\fh +(\PH ^{-1}E_0 \otimes-\hypA_+ )\nsol +(\PH ^{-1}E_N \otimes\hypA_- )\nsol 
\end{split}
\end{align*}
%for zero boundary data,
in the homogeneous case,
where
$\hypA_+=\X^{ }_+\Rot^{ }_+\X_+^T$ %=\frac{1}{2}(\hypA+|\hypA|)$ 
and $\hypA_-=\X_-^{ }\Rot^{ }_-\X_-^T$. 
When the factorization refers to the eigendecomposition, this corresponds to the SAT used for the characteristic boundary conditions of the nonlinear Euler equations in \cite{HickenNo17}.% , see \cite{krylov} for details of their implementation.
\end{remark}

\section{Parabolic systems}
\label{SectPara}

Consider the parabolic (or %possibly
 incompletely parabolic) system of partial differential equations 
\begin{align}
\label{ParaSyst}
\begin{array}{rll}\sol _t+ \paraA \sol _x-\paraE \sol _{xx}=&\hspace{-7pt}\force ,\hspace{20pt}&x\in[\xl,\xr],\hspace{20pt}\vspace{4pt}\\\HL \sol +\GL \sol _x=&\hspace{-7pt}\gl,&x= \xl,\vspace{4pt}\\\HR \sol +\GR \sol _x=&\hspace{-7pt}\gr,&x= \xr,\end{array}
\end{align}
for $t\geq0$, augmented with the initial condition $\sol (x,0)=\sol _0(x)$.
The matrices %$\hypR\geq0$, 
$\paraA$ and $\paraE\geq0$ are symmetric $\sizepara\times\sizepara$ matrices, and we assume that %it is possible to factorize 
$\GL$ and $\GR$ scales %with $\paraE$ 
as $\GL=\factorL\paraE$ and $\GR=\factorR\paraE$, respectively.
Treating $\sol _x$ as a separate variable,
we can
rewrite \eqref{ParaSyst} as a  first order system (as was also done in  \cite{HickenNo17,Berg20126846}), arriving at
\begin{align}
\label{ParaSystAsHypSyst}
\begin{array}{rlll}\bigI\bigU_t+\bigR\bigU+ \bigA \bigU_x=&\hspace{-7pt}\bigF,\hspace{20pt}&x\in[\xl,\xr],\hspace{20pt}\vspace{4pt}\\\bigBL \bigU=&\hspace{-7pt}\gl,&x= \xl,\vspace{4pt}\\\bigBR \bigU=&\hspace{-7pt}\gr,&x= \xr,\end{array}
\end{align}
where %
\begin{align}
\bigI=\left[\begin{array}{cc}I_\sizepara&0\\0&0\end{array}\right],&&
\bigR=\left[\begin{array}{cc}0&0\\0&\paraE
\end{array}\right],&&
\bigU=\left[\begin{array}{c}\sol \\\sol _x\end{array}\right],&&
\bigF=\left[\begin{array}{c}\force \\0\end{array}\right]%
\notag\end{align}
and
\begin{align}
\label{LiteViktigare}
\bigA=\left[\begin{array}{cc}\paraA&-\paraE\\-\paraE&0
\end{array}\right], &&\bigBL=\left[\begin{array}{cc}\HL&\GL
\end{array}\right],&&\bigBR=\left[\begin{array}{cc}\HR&\GR
\end{array}\right].
\end{align}
The system \eqref{ParaSystAsHypSyst} has almost 
the same form as \eqref{HypSyst} since $\bigR\geq0$ and $\bigA$ are symmetric $ \sizebig\times \sizebig$  matrices, where  $\sizebig= 2\sizepara$.
Thus we can use the results from the hyperbolic case.

\begin{remark}\label{RankDim}
In  \cite{Berg201341,Berg2014135} the  operators corresponding to $\HL$, $\GL$, $\HR$ and $\GR$ are square $\sizepara\times \sizepara$ 
matrices and their ranks are changed to suit the number of boundary conditions. We 
adapt the matrix  dimensions instead. Both approaches have their respective advantages.
\end{remark}

\subsection{Discretization using wide-stencil second derivative operators}

To discretize the parabolic problem, we first  consider the reformulated problem \eqref{ParaSystAsHypSyst}, and use the
results from the hyperbolic section. Then we  rearrange the terms such that we get an equivalent scheme but in %the original, second order form.
a form corresponding to \eqref{ParaSyst}. 
These steps, which are done in
Appendix~\ref{ReformWide},  lead to
\begin{align}
\label{FinalWide}
\begin{split}\nsol _t
+(D_1\otimes \paraA)\nsol -(D_1
^2\otimes \paraE)\nsol =\fh &+\Pbar^{-1}(e_0 \otimes\wide\penI_0+D_1^Te_0 \otimes\wide\penS_0)\wide{\xi}_0\\
&+\Pbar^{-1}(e_N \otimes\wide\penI_N+D_1^Te_N \otimes\wide\penS_N)
\wide{\xi}_N%\\\nsol (0)&=f_h
\end{split}
\end{align}
where 
\begin{align}
\label{xi}
\wide{\xi}_\noll&=\HL{\nsol }_\noll +\GL (\Dbar \nsol )_\noll -\gl,&\wide{\xi}_N&= \HR{\nsol }_N+\GR (\Dbar \nsol )_N -\gr,
\end{align}
and $\Pbar=(\PH \otimes I_\sizepara)$ and $\Dbar=(D_1\otimes I_\sizepara)$.
The penalty parameters in \eqref{FinalWide} are
\begin{align}\label{WIDEtauRel}
\begin{split}\wide\penI_\noll&=(-\bigX_1+\p\bigX_2)\bigRot^{ }_+(\bigJL+\p \factorL\bigX_2\bigRot^{ }_+ )^{-1},\hspace{10pt}\wide\penS_\noll=\hspace{9pt}\bigX_2\bigRot^{ }_+(\bigJL+\p \factorL\bigX_2\bigRot^{ }_+)^{-1}\\
\wide\penI_N&=\hspace{8pt}(\bigX_3+\p\bigX_4)\bigRot^{ }_-(\bigJR-\p\factorR \bigX_4\bigRot^{ }_-)^{-1},\hspace{7pt}\wide\penS_N=-\bigX_4\bigRot^{ }_-(\bigJR-\p\factorR \bigX_4\bigRot^{ }_-)^{-1}\end{split}%\\
\end{align}
where the matrices  $\bigX_{1,2,3,4}$ are defined through 
\begin{align}
\label{X1234}
\bigX_+=\left[\begin{array}{c}\bigX_1\\\bigX_2\end{array}\right],&&\bigX_-=\left[\begin{array}{c}\bigX_3\\\bigX_4\end{array}\right].\end{align}
As before, $\bigRot^{ }_\pm$, $\bigX_\pm$ and $\bigJL$, $\bigJR$ are described in \eqref{ALambdaX} and \eqref{RLrelation}, respectively, but are now obtained using  $\bigA$ and $\bigBL$, $\bigBR$ from \eqref{LiteViktigare}. 
Finally, the quantity 
 $\p$ in \eqref{WIDEtauRel} is given by
\begin{align}
\label{defp}
\p=e_0^T\PH ^{-1}e_0=e_N^T\PH ^{-1}e_N.
\end{align}
The matrix $\PH $ is positive definite and proportional to the grid size $h$, and thus
 $\p$ is a positive scalar proportional to $1/h$.

\subsection{Discretization using narrow-stencil second derivative operators}

In \cite{Berg20126846}, it was %then 
suggested that dual consistency might require wide-stencil second derivative operators, 
but next
 we will show  that 
this is not necessary.
The  semi-discrete scheme approximating \eqref{ParaSyst}
is now  written,
analogously to \eqref{FinalWide}, as
\begin{align}
\label{PrimalCOMPACT}
\begin{split}\nsol _t+(D_1\otimes \paraA)\nsol -(D_2\otimes \paraE)\nsol =\fh &+\Pbar^{-1}(e_\noll \otimes\taul +S^Te_\noll \otimes\sigmal )\xi_\noll\\
&+\Pbar^{-1}(e_N \otimes\taur+S^Te_N \otimes\sigmar)
\xi_N.
\end{split}
\end{align}
The operator $D_2$, which approximates the  second derivative operator, is 
no longer % allowed to be narrow, i.e. it is not 
limited to the previous form $D_1^2$,  where the   first derivative is used twice. However,  $D_2$ must still fulfill the 
SBP  relations
\begin{align}\label{SBPprop2}
D_2=\PH ^{-1}(-\compA+(E_N-E_0)S),&&\hspace{23pt}\compA=\compA^T=S^TMS\geq0.
\end{align}
%, where $B=E_N-E_0$ and $A^S=(A+A^T)/2$.
The first and last row
of the matrix
$S$  are consistent difference stencils, see e.g.  \cite{Mattsson2004503}. For %stability it  is not necessary, but  we will see later that for 
dual consistency, $\compA$ must be symmetric.
Further, we have
\begin{align}
\label{xiCOMPACT}
\xi_\noll&=\HL{\nsol }_0 +\GL (\Sbar \nsol )_0 -\gl,&\xi_N&= \HR{\nsol }_N+\GR (\Sbar \nsol )_N -\gr,
\end{align}
where
\begin{align}
\Sbar=S\otimes I_\sizepara,&&
(\Sbar  \nsol )_0=(e_0^TS\otimes I_\sizepara)\nsol ,&&(\Sbar  \nsol )_N=(e_N^TS\otimes I_\sizepara)\nsol .
\notag
\end{align}
We also define
\begin{align}
\label{qdef}
\q\equiv\q_0+|\q_c|=\q_N+|\q_c|
\end{align}where\begin{align}
\label{qdefparts}
\q_0=e_0^TM^{-1}e_0,&&\q_N=e_N^TM^{-1}e_N,&& \q_c=e_0^TM^{-1}e_N=e_N^TM^{-1}e_0,
\end{align}
and where $M$ is a part of $D_2$ as stated in \eqref{SBPprop2}.
In Section~\ref{AppRinv} we provide
 $\q$  for various $D_2$ matrices.
%We are now ready to present the penalty parameters 
The penalty parameters $\taul$, $\sigmal$, $\taur$ and $\sigmar$ in \eqref{PrimalCOMPACT}
are  now given by: %given  in \eqref{TauUtanMu}.
 
\begin{theorem}
\label{ParaSystPen}
Consider the problem \eqref{ParaSyst}
 with $\GL=\factorL\paraE$ and $\GR=\factorR\paraE$.
Further, let $\bigA$, which is specified in \eqref{LiteViktigare}, be factorized as $\bigA=\bigX\bigRot \bigX^T$ as described in \eqref{ALambdaX}.
Then %, given that \eqref{ParaSyst} is well-posed, 
the particular choice of 
penalty parameters
\begin{align}
\label{TauUtanMu}
\begin{split}
\taul&=(-\bigX_1+\q\bigX_2)\bigRot^{ }_+
(\bigJL+\q \factorL\bigX_2\bigRot^{ }_+ )^{-1},\hspace{9pt}\sigmal=\hspace{11pt}\bigX_2\bigRot^{ }_+(\bigJL+\q \factorL\bigX_2\bigRot^{ }_+)^{-1}
\\
\taur&=\hspace{8pt}(\bigX_3+\q\bigX_4)\bigRot^{ }_-
(\bigJR-\q\factorR \bigX_4\bigRot^{ }_-)^{-1},\hspace{7pt}
\sigmar=-\bigX_4\bigRot^{ }_-(\bigJR-\q\factorR \bigX_4\bigRot^{ }_-)^{-1}
\end{split}
\end{align}
makes the scheme in \eqref{PrimalCOMPACT} stable and dual consistent.
The matrices $\bigX_{1,2,3,4}$ are given in
\eqref{X1234},
$\bigJL$, $\bigJR$ are obtained from \eqref{RLrelation} (using $\bigBL$, $\bigBR$ in \eqref{LiteViktigare}) and
$\q$ is defined in \eqref{qdef}. %
\end{theorem}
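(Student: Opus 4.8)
The strategy is to reduce the narrow-stencil case to the wide-stencil case that was already treated in \eqref{FinalWide}--\eqref{WIDEtauRel}. The key observation is that the narrow operator $D_2$, via its SBP decomposition \eqref{SBPprop2}, differs from $D_1^2$ only in the boundary behaviour, and this boundary difference is governed entirely by the scalar $\q$ defined in \eqref{qdef}--\eqref{qdefparts}, which plays for $D_2$ exactly the role that $\p=e_0^T\PH^{-1}e_0$ plays for $D_1^2$. First I would form the discrete operator $\lindisc$ associated with \eqref{PrimalCOMPACT} and compute its adjoint $\lindisc^*=\Pbar^{-1}\lindisc^T\Pbar$ using the SBP relation \eqref{SBPprop2}, in complete analogy with \eqref{Lh*comp}. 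Then I would write down the target form $(\lindisc^*)^{goal}$ dictated by the discretized dual problem (the narrow-stencil analogue of \eqref{Lh*goal}), and equate the two. As in Section~\ref{SecDualConsHyp}, this produces a small set of algebraic constraints on the penalty parameters $\taul,\sigmal,\taur,\sigmar$ and on the (as yet unspecified) dual penalties; the constraint involving only primal data is the one that must be solved.

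Next I would carry out the energy estimate for \eqref{PrimalCOMPACT}: multiply by $\nsol^T\Pbar$, add the transpose, and use \eqref{SBPprop2} together with $\compA=\compA^T\geq0$. The positive-semidefinite volume term $\nsol^T(\Sbar^T(M\otimes\paraE)\Sbar)\nsol$ is harmless, and what remains is a boundary quadratic form in $\nsol_0$, $(\Sbar\nsol)_0$, $\nsol_N$, $(\Sbar\nsol)_N$. Here I would invoke the reduction to the first-order system \eqref{ParaSystAsHypSyst}: identifying $\sol_x$ at the boundary with $(\Sbar\nsol)_{0,N}$ and comparing the boundary structure with the hyperbolic case, the stability requirement becomes precisely the condition that the matrices $\bigC_0$, $\bigC_N$ built from $\bigA,\bigBL,\bigBR$ in \eqref{LiteViktigare} are negative semidefinite -- i.e. well-posedness of the reformulated problem, which is inherited from well-posedness of \eqref{ParaSyst}. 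The explicit penalties \eqref{TauUtanMu} are then obtained by taking the "optimal" choice analogous to $\SigL=-\X_+\Rot_+\JL^{-1}$ in Theorem~\ref{HypSystPen}, but now with $\p$ replaced throughout by $\q$, because in the narrow-stencil energy identity and adjoint computation every occurrence of $e_0^T\PH^{-1}e_0$ from the $D_1^2$ term is replaced by the corresponding entry of $M^{-1}$, and the off-diagonal coupling $\q_c=e_0^TM^{-1}e_N$ is absorbed into the definition \eqref{qdef} precisely so that $\bigJL+\q\factorL\bigX_2\bigRot_+$ (and its right-boundary analogue) stays invertible. Finally I would verify directly that the parameters \eqref{TauUtanMu} satisfy both the dual-consistency constraint from the first step and the semidefiniteness condition from the second step, and repeat the argument verbatim at $x=\xr$.

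The main obstacle I anticipate is the bookkeeping around $\q$ and the invertibility of the scaling matrices $\bigJL+\q\factorL\bigX_2\bigRot_+$ and $\bigJR-\q\factorR\bigX_4\bigRot_-$. In the wide-stencil case the analogous factor $\p$ is a clean single number $e_0^T\PH^{-1}e_0$, decoupled between the two boundaries; in the narrow case the natural quantities $\q_0,\q_N$ at the two ends are coupled through $\q_c$, and one must argue that replacing $\q_0$ (resp.\ $\q_N$) by the inflated value $\q=\q_0+|\q_c|$ is both sufficient for the boundary quadratic form to retain its sign and consistent with dual consistency -- this is exactly the point deferred to Section~\ref{AppRinv}, so here I would only need to quote that $\q>0$ and that the stated choice keeps the penalties well-defined. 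The rest is a faithful repetition of the hyperbolic derivation with $(\bigR,\bigA,\bigBL,\bigBR,\p)\mapsto(\bigR,\bigA,\bigBL,\bigBR,\q)$, so once the role of $\q$ is pinned down the algebra is routine.
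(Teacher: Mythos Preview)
Your dual-consistency argument is essentially the paper's: compute $\lindisc^*$, match it to the target form, and observe that the resulting algebraic constraints \eqref{ExplicitDualCOMP} are independent of whatever $h$-dependent scalar appears in the penalties, so the wide-stencil penalties with $\p$ and the narrow-stencil penalties with $\q$ satisfy them alike. Good.

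The stability argument, however, has a real gap. You write that after the energy estimate ``the positive-semidefinite volume term $\nsol^T(\Sbar^T(M\otimes\paraE)\Sbar)\nsol$ is harmless, and what remains is a boundary quadratic form in $\nsol_0,(\Sbar\nsol)_0,\nsol_N,(\Sbar\nsol)_N$,'' and you then plan to identify $(\Sbar\nsol)_{0,N}$ with $\sol_x$ and import the hyperbolic argument. This direct route fails: as the paper states explicitly just after \eqref{FirstBTCompact}, for Dirichlet boundary conditions one cannot simultaneously make those boundary quadratic forms negative semidefinite and satisfy the dual-consistency constraints. The volume term is \emph{not} harmless; it must be actively used.

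The missing ingredient is the auxiliary variable $W$ in \eqref{WdefCompact},
\[
W=\Sbar\nsol+(M^{-1}e_0\otimes\wpen_0)\xi_0+(M^{-1}e_N\otimes\wpen_N)\xi_N,
\]
which is how the entries of $M^{-1}$ actually enter the estimate. One computes $W^T(M\otimes\paraE)W$ and trades it against the volume term; the cross-boundary piece proportional to $\q_c=e_0^TM^{-1}e_N$ is then removed by Young's inequality, and \emph{that} is the reason for the inflation $\q=\q_0+|\q_c|$---not invertibility of $\bigJL+\q\factorL\bigX_2\bigRot_+$. After this step the boundary quantities to be compared with the first-order system are the shifted variables $\Wtilde_0=(\Sbar\nsol)_0+\q\wpen_0\xi_0$ and $\Wtilde_N=(\Sbar\nsol)_N+\q\wpen_N\xi_N$ of \eqref{W0NdefCOMP}, not $(\Sbar\nsol)_{0,N}$ themselves. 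With $\VCOMP_0=[\nsol_0^T,\Wtilde_0^T]^T$ one recovers exactly the hyperbolic boundary term $\VCOMP_0^T\bigA\VCOMP_0+2\VCOMP_0^T\bigSigL(\bigBL\VCOMP_0-\gl)$, and the rest is as you describe. Without this $W$-construction your stability step does not go through for the penalties \eqref{TauUtanMu}.
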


Note that 
$\q$ in \eqref{qdef} is a 
generalization of $\p$ in \eqref{defp},
and that the penalty parameters in \eqref{TauUtanMu} %are a generalization of those in 
and \eqref{WIDEtauRel} %, since they 
 are identical if $\q=\p$.
Hence the narrow-stencil scheme \eqref{PrimalCOMPACT} is
a generalization of the wide-stencil scheme in \eqref{FinalWide}, since the schemes are identical if we choose $D_2=D_1^2$, $S=D_1$ and $M=\PH $.
In the rest of this section we will 
justify these generalizations and
prove Theorem~\ref{ParaSystPen} by showing that
%these generalizations are justified  and that
 the penalties given in %\eqref{COMPACTtaumuRel}
\eqref{TauUtanMu}
 indeed make the scheme
\eqref{PrimalCOMPACT} stable and dual consistent.

\subsection{Stability when using narrow-stencil second derivative operators}

We multiply the scheme \eqref{PrimalCOMPACT} by $\nsol ^T\Pbar$ from the left and add the  transpose of the result. %self.
%As before we omit the forcing function $\fh $.
Thereafter 
using the
SBP-properties in \eqref{SBPprop1} and \eqref{SBPprop2} 
yields
\begin{align}
\label{FirstEnergyCompact}
%\ddt\left(\nsol ^T\Pbar \nsol \right)+2{\nsol }^T(S^TMS \otimes \paraE){\nsol }&=\text{BT}_L^D+\text{BT}_R^D,
\ddt\| \nsol \|^2_\discmark+2{\nsol }^T(S^TMS \otimes \paraE){\nsol }&=2\inprod{\nsol , \fh }_\discmark+\text{BT}_L^{Disc.}+\text{BT}_R^{Disc.},\end{align}
 where
\begin{align}
\label{FirstBTCompact}
\begin{split}
\text{BT}_L^{Disc.}&=\hspace{10pt}\nsol _0^T\paraA{\nsol }_0 \hspace{2pt}-\hspace{1pt}2{\nsol }_0^T\paraE(\Sbar{\nsol })_0 \hspace{2pt}+\hspace{1pt}2(\nsol _0^T \taul \hspace{2pt}+(\Sbar \nsol )^T_0 \sigmal \hspace{2pt})\xi_\noll\\
\text{BT}_R^{Disc.}&=-{\nsol }_N^T\paraA{\nsol }_N+2{\nsol }_N^T\paraE(\Sbar{\nsol })_N+2(\nsol _N^T\taur+(\Sbar \nsol )_N^T\sigmar)\xi_N
\end{split}\end{align}
%and 
where $\xi_{0,N}$ are given in \eqref{xiCOMPACT}.
If $\text{BT}_L^{Disc.}$ and $\text{BT}_R^{Disc.}$ are non-positive for zero data the scheme is stable.
This can be achieved
if $\taul$, $\sigmal$, $\taur$ and $\sigmar$ are chosen freely, 
%but we also want to take dual consistency   into account. 
but the scheme should also be dual consistent. 
%For example i
It turns out %to be 
that in some cases these requirements are
impossible to combine, for example when having 
 Dirichlet boundary conditions. 
We therefore need an alternative way to show stability.

%\subsubsection{Alternative boundary terms using ideas from the wide-stencil case}%LDG form}
\label{StabComp}

First, we  assume 
that the penalty parameters $\sigmal$ and $\sigmar$ 
scales with $\paraE$.
Let 
 \begin{align}\label{wpensigma}\sigmal=-\paraE\wpen_0,&&
 \sigmar=-\paraE\wpen_N.\end{align}
Next, we take a look at the wide case (which is partly presented in Appendix~\ref{ReformWide}). 
Using a wide counterpart to \eqref{wpensigma}, $\wide\penS_0=-\paraE\wide\wpen_0$ and $\wide\penS_N=-\paraE\wide\wpen_N$, and the later relations in \eqref{xichirel} and \eqref{WIDEtaumuRel},
 %the second equation in \eqref{Approach2} 
we can rewrite \eqref{Approach2b} 
as
\begin{align*}
\wide{W}
&=\Dbar \nsol +(\PH ^{-1}e_0 \otimes\wide\wpen_0)\wide\xi_\noll+(\PH ^{-1}e_N \otimes\wide\wpen_N )\wide\xi_N.
\end{align*}
We return to  the narrow-stencil scheme \eqref{PrimalCOMPACT}. 
Inspired by
%To prove stability of this scheme we use ideas from 
the wide case, we define 
\begin{align}\label{WdefCompact}
W&\equiv\Sbar \nsol +( M^{-1} e_0\otimes \wpen_0)\xi_\noll+(M^{-1}  e_N\otimes\wpen_N)\xi_N.
\end{align} 
From \eqref{WdefCompact} we compute
\begin{align}
W^T(M\otimes \paraE)W
=\nsol ^T(S^TMS\otimes \paraE)\nsol &+\left(2(\Sbar \nsol )_0 \hspace{3pt}+\q_0\hspace{1pt}\wpen_0\hspace{1pt}\xi_\noll\hspace{3pt}+\q_c\wpen_N\xi_N\right)^T\paraE\wpen_0\hspace{2pt}\xi_\noll\notag\\
&+\left(2(\Sbar \nsol )_N +\q_N\wpen_N\xi_N+\q_c\wpen_0\hspace{1pt}\xi_\noll\right)^T\paraE\wpen_N\xi_N\notag
\end{align}
where $\q_0$, $\q_N$ and $\q_c$ are given in \eqref{qdefparts}.
%
%As already mentioned,  %$\q_c$ was zero for the wide operator, %since we assumed  $\PH $ to be diagonal, 
In the general case, $\q_c$ %is  not necessarily  zero, 
can be non-zero.
Since we want to treat the two boundaries separately, 
we use Young's inequality, 
$\q_c(\xi_N^T\wpen_N^T\paraE\wpen_0\xi_\noll+\xi_\noll^T\wpen_0^T\paraE\wpen_N\xi_N)\leq|\q_c|\left(\xi_\noll^T\wpen_0^T\paraE\wpen_0\xi_\noll
+\xi_N^T\wpen_N^T\paraE\wpen_N\xi_N\right)$, which leads to
\begin{align}
\label{LessThanComp}
\begin{split}
W^T(M\otimes \paraE)W
\leq \nsol ^T(S^TMS\otimes \paraE)\nsol &+\left(2(\Sbar \nsol )_0 \hspace{3pt}+\q \wpen_0\hspace{1pt}\xi_\noll\hspace{1pt}\right)^T\paraE\wpen_0\hspace{1pt}\xi_\noll\\
&+\left(2(\Sbar \nsol )_N +\q\wpen_N\xi_N\right)^T\paraE\wpen_N\xi_N\end{split}
\end{align}
where $\q=\q_0+|\q_c|=\q_N+|\q_c|$, as stated in \eqref{qdef}.
Further, we note that multiplying \eqref{WdefCompact} by $(e_0^T\otimes I_\sizepara)$ and $(e_N^T\otimes I_\sizepara)$, respectively, yields the relations
% \begin{align*} W_0&= (\Sbar \nsol )_0+ \q_0 \wpen_0\xi_\noll+\q_c \wpen_N\xi_N&W_N&=(\Sbar \nsol )_N+\q_c\wpen_0\xi_\noll+\q_N \wpen_N\xi_N.\end{align*} 
$W_0= (\Sbar \nsol )_0+ \q_0 \wpen_0\xi_\noll+\q_c \wpen_N\xi_N$ and $W_N=(\Sbar \nsol )_N+\q_c\wpen_0\xi_\noll+\q_N \wpen_N\xi_N$.
Instead of using those, which contain unwanted terms from the other boundary, %the following definitions will prove useful:
we define
\begin{align}
\label{W0NdefCOMP}
\Wtilde_0&\equiv(\Sbar \nsol )_0+\q \wpen_0\xi_\noll&\Wtilde_N&\equiv(\Sbar \nsol )_N+\q \wpen_N\xi_N.
\end{align}
Inserting the relation \eqref{LessThanComp} into  \eqref{FirstEnergyCompact}, we obtain 
\begin{align}
\label{SecondEnergyCompact}
%\ddt\left(\nsol ^T\Pbar \nsol \right)
\ddt\| \nsol \|^2_\discmark
+2W^T(M\otimes \paraE)W&\leq2\inprod{\nsol , \fh }_\discmark+ \BTsnok_L+ \BTsnok_R\end{align}
 where 
  \eqref{FirstBTCompact} and \eqref{W0NdefCOMP} together with \eqref{wpensigma} yields
 \begin{align}\label{NyaBT}
 \begin{split}
\BTsnok_L&=\hspace{10pt}\nsol _\noll^T\paraA{\nsol }_\noll \hspace{1pt}-2{\nsol }_\noll^T\paraE
 \Wtilde_\noll
 \hspace{1pt}+2(\nsol _\noll^T (\taul \hspace{1pt}-\q\sigmal\hspace{1pt})\hspace{3pt}-\Wtilde_\noll
^T\hspace{1pt} \sigmal\hspace{1pt})\xi_\noll\\
\BTsnok_R&=-{\nsol }_N^T\paraA{\nsol }_N+2{\nsol }_N^T\paraE
 \Wtilde_N
 +2(\nsol _N^T(\taur+\q\sigmar)-\Wtilde_N^T\sigmar)\xi_N.
 \end{split}
 \end{align}
If  the penalty parameters make $\BTsnok_L\leq0$ and $\BTsnok_R\leq0$ for 
  zero data, \eqref{PrimalCOMPACT} is stable.

% \subsubsection{Proof of stability  when using the compact operators}
% \subsubsection{\kom Rewriting the boundary terms on first order form} 
  
% We consider the left boundary terms in \eqref{NyaBT}.
Again taking the left boundary as an example, we
define $\VCOMP_0=[\nsol _0^T,\Wtilde_0^T]^T$
%\begin{align*}\VCOMP_0=\left[\begin{array}{c}V_0\\\Wtilde_0\end{array}\right].\end{align*}
and write the first part of $ \BTsnok_L$ in \eqref{NyaBT} as
\begin{align}\label{part1}
\nsol _0^T\paraA{\nsol }_0 -2{\nsol }_0^T\paraE\Wtilde_0=\VCOMP_0^T\bigA\VCOMP_0.
%\\-V_N^T\paraA{V}_N +2{V}_N^T\paraE\Wtilde_N=-\VCOMP_N^T\hypA\VCOMP_N.
\end{align}
Next, using
the relations \eqref{LiteViktigare}, \eqref{W0NdefCOMP} and \eqref{xiCOMPACT}, 
 recalling the assumptions
 $\GL=\factorL\paraE$ and  $\sigmal=-\paraE\wpen_0$,
 and thereafter using %\eqref{COMPACTtaumuRel} 
\eqref{TauUtanMu} from Theorem~\ref{ParaSystPen},
we obtain
\begin{align}\label{KVg}\begin{split}
\bigBL\VCOMP_0-\gl
&=\bigJL(\bigJL+\q \factorL\bigX_2\bigRot^{ }_+)^{-1}\xi_\noll.
\end{split}
\end{align} 
 From \eqref{TauUtanMu} we also get
\begin{align*}
\taul -\q\sigmal=-\bigX_1\bigRot^{ }_+
(\bigJL+\q \factorL\bigX_2\bigRot^{ }_+ )^{-1},&&-\sigmal=-\bigX_2\bigRot^{ }_+(\bigJL+\q \factorL\bigX_2\bigRot^{ }_+)^{-1}
%\\\taur+\q\sigmar=\X_3\Rot^{ }_-(\JR-\q\factorR \X_4\Rot^{ }_-)^{-1},&&-\sigmar=\X_4\Rot^{ }_-(\JR-\q\factorR \X_4\Rot^{ }_-)^{-1}
\end{align*}
such that the second part of $\BTsnok_L$ in \eqref{NyaBT} becomes
  \begin{align}\label{part2}\begin{split}
2\left(\nsol _0^T (\taul -\q\sigmal)-\Wtilde_0
^T \sigmal\right)\xi_\noll
%&=-2\left(\nsol _0^T \bigX_1+\Wtilde_0^T \bigX_2\right)\bigRot^{ }_+\left(\bigJL+\q \factorL\bigX_2\bigRot^{ }_+\right)^{-1}\xi_\noll\\
&=2\VCOMP_0^T\bigSigL(\bigBL\VCOMP_0-\gl)
%\\2(V_N^T(\taur+\q\sigmar)-\Wtilde_N^T\sigmar)\xi_N&=2(V_N^T\X_3+\Wtilde_N^T\X_4)\Rot^{ }_-(\JR-\q\factorR \X_4\Rot^{ }_-)^{-1}\xi_N\\&=2\VCOMP_N^T\SigR(\hypBR\VCOMP_N-\gr)
\end{split}
 \end{align}
 where the relations \eqref{X1234} and \eqref{KVg} have been used,
 and where $\bigSigL=-\bigX_+\bigRot^{ }_+\bigJL^{-1}$. % just as in \eqref{OptPenHyp}.
Now we can, by inserting \eqref{part1} and \eqref{part2} into  \eqref{NyaBT},  write
 \begin{align*}
  \begin{split}
\BTsnok_L&=\VCOMP_0^T\bigA\VCOMP_0+2\VCOMP_0^T\bigSigL(\bigBL\VCOMP_0-\gl)
 %\widetilde{\text{BT}_L^D}&=\hspace{10pt}\VCOMP_0^T\bigA\VCOMP_0+\hspace{2pt}2\VCOMP_0^T\bigSigL(\bigBL\VCOMP_0-\gl)
% \\ \widetilde{\text{BT}_R^D}&=-\VCOMP_N^T\hypA\VCOMP_N+2\VCOMP_N^T\SigR(\hypBR\VCOMP_N-\gr).
 \end{split}
 \end{align*}
which  has exactly  the same form as $\text{BT}_L^{Disc.}$ in \eqref{HypBTInnan}.
We thus know that $\BTsnok_L\leq0$ for zero data,
since $\bigSigL$ is computed just as in the hyperbolic case. 
The same procedure can, of course, be repeated for the right boundary. 
We conclude that the scheme \eqref{PrimalCOMPACT} with the penalty parameters \eqref{TauUtanMu} is %in fact strongly 
stable.

\subsection{Dual consistency for narrow-stencil second derivative operators}

The dual problem of \eqref{ParaSyst} is
\begin{align}
\label{ParaSystDual}
\begin{array}{rll}\dualvar_\tau
- \paraA \dualvar_x-\paraE \dualvar_{xx}=&\hspace{-7pt}\weight ,\hspace{20pt}&x\in[\xl,\xr],\hspace{20pt}\vspace{4pt}\\
\dual{\HL} \dualvar+\dual{\GL} \dualvar_x=&\hspace{-7pt}\gldual,&x= \xl,\vspace{4pt}\\
\HRdual \dualvar+\dual{\GR} \dualvar_x=&\hspace{-7pt}\grdual,&x= \xr,\end{array}
\end{align}
for $\tau\geq0$ and with $\dualvar(x,0)=\dualvar_0(x)$.
The %continuous 
spatial operator in \eqref{ParaSyst} and its dual are thus
\begin{align}
\label{LbothSYST}
%\label{LprimalSYST}
\linop =\paraA\frac{\partial}{\partial x}-\paraE\frac{\partial^2}{\partial x^2},%\notag
%\end{align}
&&
\linop ^*=-\paraA\frac{\partial}{\partial x}-\paraE\frac{\partial^2}{\partial x^2}.
\end{align}
The semi-discrete  approximation of \eqref{ParaSystDual}
is
\begin{align}
\label{DualCOMPACT}
\begin{split}\dualdisc_\tau-(D_1\otimes \paraA)\dualdisc-(D_2\otimes \paraE)\dualdisc=\gh &+\Pbar^{-1}(e_0\hspace{2pt}  \otimes\dual{\taul}\hspace{2pt} +S^Te_0\hspace{2pt}  \otimes\dual{\sigmal}\hspace{2pt}  )\dual{\xi_\noll}\\
&+\Pbar^{-1}(e_N \otimes\dual{\taur}+S^Te_N \otimes\dual{\sigmar})
\dual{\xi_N},%\\\dualdisc(0)&=\dual{f}_h,
\end{split}
\end{align}
where
\begin{align*}
\dual{\xi_\noll}&=\dual{\HL}\dualdisc_0 +\dual{\GL} (\Sbar \dualdisc)_0 -\gldual,&\dual{\xi_N}&= \HRdual\dualdisc_N+\dual{\GR }(\Sbar \dualdisc)_N -\grdual.
\end{align*}
From \eqref{PrimalCOMPACT} we see that
the discrete operator, corresponding to $\linop$ in \eqref{LbothSYST}, is 
\begin{align}
\label{LhDiscCompact}
%\begin{split}\lindisc &=(D_1\otimes \paraA)-(D_2\otimes \paraE)-\Pbar^{-1}(e_0 \otimes\taul +S^Te_0 \otimes\sigmal )(e_0^T\otimes\HL + e_0^TS\otimes\GL)\\&\hspace{100pt}-\Pbar^{-1}(e_N \otimes\taur+S^Te_N \otimes\sigmar)(e_N^T\otimes\HR+e_N^TS\otimes\GR ).\end{split}
\begin{split}\lindisc =(D_1\otimes \paraA)-(D_2\otimes \paraE)&-\Pbar^{-1}(e_0 \otimes\taul +S^Te_0 \otimes\sigmal )(e_0^T\otimes\HL + e_0^TS\otimes\GL)\\&-\Pbar^{-1}(e_N \otimes\taur+S^Te_N \otimes\sigmar)(e_N^T\otimes\HR+e_N^TS\otimes\GR ).\end{split}
\end{align}
Using the relations in \eqref{SBPprop1} and \eqref{SBPprop2}, we obtain
\begin{align}
\lindisc ^*= \Pbar^{-1}\lindisc ^T\Pbar&=-(D_1\otimes \paraA)-(D_2\otimes \paraE)\notag\\
&-\Pbar^{-1}(e_0e_0^T\otimes \paraA)+\Pbar^{-1}\left((S^Te_0e_0^T-e_0e_0^TS)\otimes \paraE\right)\notag\\
&+\Pbar^{-1}(e_Ne_N^T\otimes\paraA)-\Pbar^{-1}((S^Te_Ne_N^T-e_Ne_N^TS)\otimes\paraE)\notag\\
&-\Pbar^{-1}(e_0\otimes\HL^T+ S^Te_0\otimes\GL^T)(e_0^T \otimes\taul^T +e_0^TS \otimes\sigmal^T )\notag\\
&-\Pbar^{-1}(e_N\otimes\HR^T+S^Te_N\otimes\GR^T )(e_N^T \otimes\taur^T+e_N^TS \otimes\sigmar^T).\notag
\end{align}
However, from \eqref{DualCOMPACT}
we see that for dual consistency $\lindisc ^*$ must have the form
\begin{align}
(\lindisc ^*)^{goal}&=-(D_1\otimes \paraA)-(D_2\otimes \paraE)\notag\\
&-\Pbar^{-1}(e_0 \otimes \dual{\taul} +S^Te_0 \otimes \dual{\sigmal} )(e_0^T\otimes\dual{\HL} + e_0^TS\otimes\dual{\GL})\notag\\&%\hspace{124pt}
-\Pbar^{-1}(e_N \otimes\dual{\taur}+S^Te_N \otimes\dual{\sigmar})(e_N^T\otimes\HRdual+e_N^TS\otimes\dual{\GR }).\notag
\end{align}
Demanding that $\lindisc ^*=(\lindisc ^*)^{goal}$, gives us the duality constraints
\begin{align}\label{ExplicitDualCOMP}\begin{split}
\left[\begin{array}{cc}\HL^T\taul^T+\paraA&\HL^T\sigmal^T+\paraE\\\GL^T\taul^T-\paraE&\GL^T\sigmal^T\end{array}\right]&=\left[\begin{array}{cc} \dual{\taul}\dual{\HL}& \dual{\taul}\dual{\GL}\\ \dual{\sigmal}\dual{\HL}& \dual{\sigmal}\dual{\GL}\end{array}\right]\\
\left[\begin{array}{cc}\HR^T\taur^T-\paraA&\HR^T\sigmar^T-\paraE\\\GR^T\taur^T+\paraE&\GR^T\sigmar^T\end{array}\right]&=\left[\begin{array}{cc} \dual{\taur}\HRdual& \dual{\taur}\dual{\GR}\\ \dual{\sigmar}\HRdual& \dual{\sigmar}\dual{\GR}\end{array}\right].\end{split}
\end{align}
The duality constraints in \eqref{ExplicitDualCOMP}  do not depend explicitly  on the grid size $h$. Moreover, we already know that for the wide case, the penalty parameters in %\eqref{COMPACTtaumuRel}
 \eqref{WIDEtauRel} -- even though they contain the $h$-dependent constant $\p$ -- gives dual consistency. %for the wide scheme.
Since the generalized penalty parameters in \eqref{TauUtanMu} have exactly the same form (the  only difference is that they depend on {\it another} $h$-dependent constant, $\q$) they
 will also yield dual consistency.
We have thus shown that the penalty parameters in Theorem~\ref{ParaSystPen} indeed makes the scheme \eqref{PrimalCOMPACT} stable and dually consistent.

\begin{remark}
The SAT parameters in Theorem~\ref{ParaSystPen} are probably a subset of all parameters giving stability and dual consistency since the duality constraint \eqref{ExplicitDualCOMP}   could be used in combination with some other stability proof than the one presented here.
\end{remark}

\section{Computing %the  constant 
$\q$}
\label{AppRinv}

We want to compute $\q=\q_0+|\q_c|=\q_N+|\q_c|$ as stated in \eqref{qdef} and are thus looking for $\q_0$, $\q_N$ and $\q_c$
specified in \eqref{qdefparts}.
For wide second derivative operators, $M$ is equal to $\PH $, and is thus well-defined. 
%In the case of 
When using
narrow second derivative operators, $M$ is defined in \eqref{SBPprop2}  through $\compA=S^TMS$. %The problem is that it is  
However, only the first and last row of $S$ %that 
are clearly specified.
In for example \cite{Carpenter1999341,Mattsson2004503,Eriksson20092659}, the %matrix $S$ is defined having a unity diagonal 
 interior of $S$ is the identity matrix,
and $S$ is then invertible. 
$\compA$ is singular
(since $\compA=(E_N-E_0)S-\PH D_2$, where
$D_2$ and the first and last row of $S$ are consistent difference operators)   
and thus an invertible $S$ implies that $M$ is singular.

If $M$ and $S$ are defined such that  $M$ is singular
and $S$ not, 
which is often the case, 
we use the following strategy to find $\q$:
The relation
$\compA=S^TMS$ leads to $M^{-1}=S\compA^{-1}S^T$, 
but
since $\compA$ is singular 
we define
 the perturbed matrix $\compAsnok \equiv \compA+\pert E_0$ and %a corresponding
compute $\Msnok^{-1}=S\compAsnok ^{-1}S^T$ instead.
This is motivated by the following proposition:

\begin{proposition}
\label{propJK}
Define $\compAsnok \equiv \compA+\pert E_j$, where $E_j$  is an all-zero matrix except for the element $(E_j)_{j,j}=1$, with $0\leq j\leq N$.
The inverse of $\compAsnok $ is  
$\compAsnok ^{-1}=J/\pert+K_j$
where $J$ is an  all-ones matrix and $K_j$ is a matrix that does not depend on the scalar $\pert$.
A consequence of this structure is that 
the corners of   $\Msnok^{-1}=S\compAsnok ^{-1}S^T$ are independent of $\pert$, such that   
\begin{align}
\label{qdefpartsSNOK}
\q_0=e_0^T\Msnok^{-1}e_0,&&\q_N=e_N^T\Msnok^{-1}e_N,&& \q_c=e_0^T\Msnok^{-1}e_N=e_N^T\Msnok^{-1}e_0.
\end{align}
\end{proposition}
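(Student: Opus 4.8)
The plan is to exhibit $\compAsnok^{-1}$ explicitly and read the structure off it. The one property of $\compA$ that does all the work is that its kernel is one-dimensional and spanned by the all-ones vector $\ett$. Indeed $\compA\ett=0$: rearranging \eqref{SBPprop2} gives $\compA=(E_N-E_0)S-\PH D_2$, and $D_2\ett=0$ since a consistent approximation of $\partial^2/\partial x^2$ annihilates constant grid functions, while $(E_N-E_0)S\ett=(S\ett)_N\,e_N-(S\ett)_0\,e_0=0$ because the first and last rows of $S$ are consistent first-derivative stencils and hence have entries summing to zero. That the kernel is \emph{only} one-dimensional is the standard property of narrow-stencil $D_2$ operators (cf.\ \cite{Mattsson2004503}). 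Since $\compA=\compA^T$ this also yields $\ett^T\compA=0$ and $\mathrm{range}(\compA)=\ett^{\perp}$, which is what I will use below.

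Next I would simply verify the asserted identity. Write $J=\ett\ett^T$ and $E_j=e_je_j^T$, and note $e_j^T\ett=1$. Then $\compAsnok\,(J/\pert)=\tfrac{1}{\pert}(\compA\ett)\ett^T+e_j(e_j^T\ett)\ett^T=e_j\ett^T$, so it remains to find a $\pert$-independent $K_j$ with $\compAsnok K_j=I-e_j\ett^T$. The $k$-th column of $I-e_j\ett^T$ equals $e_k-e_j$, which is orthogonal to $\ett$ and hence lies in $\mathrm{range}(\compA)$; therefore $\compA K=I-e_j\ett^T$ is solvable, its solutions differing by matrices of the form $\ett w^T$. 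Because $e_j^T\ett=1\neq0$, exactly one such solution satisfies the normalization $e_j^TK=0$ --- explicitly $K_j=(I-\ett e_j^T)\compA^{+}(I-e_j\ett^T)$ with $\compA^{+}$ the pseudoinverse --- and this $K_j$ depends only on $\compA$, $e_j$ and $\ett$, not on $\pert$. For it one has $\compAsnok K_j=\compA K_j+\pert\,e_j(e_j^TK_j)=I-e_j\ett^T$, so $\compAsnok(J/\pert+K_j)=I$; as $\compAsnok$ is square, this simultaneously proves that $\compAsnok$ is invertible for every $\pert\neq0$ and that $\compAsnok^{-1}=J/\pert+K_j$ --- a Sherman--Morrison-type identity repairing a one-dimensional singularity with a rank-one perturbation.

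Finally, for the consequence about the corners of $\Msnok^{-1}$ I would substitute: $\Msnok^{-1}=S\compAsnok^{-1}S^T=\tfrac{1}{\pert}(S\ett)(S\ett)^T+SK_jS^T$. As noted above $S\ett$ has first and last entries $(S\ett)_0=\sum_k(S)_{0,k}=0$ and $(S\ett)_N=\sum_k(S)_{N,k}=0$, so $e_0^T(S\ett)=e_N^T(S\ett)=0$ and every one of the four corners of $(S\ett)(S\ett)^T$ vanishes. Hence $\q_0=e_0^T\Msnok^{-1}e_0=e_0^TSK_jS^Te_0$, and likewise $\q_N=e_N^TSK_jS^Te_N$ and $\q_c=e_0^TSK_jS^Te_N=e_N^TSK_jS^Te_0$, all independent of $\pert$, which is exactly \eqref{qdefpartsSNOK}. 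The points that really need care are the uniqueness/normalization that makes $K_j$ manifestly $\pert$-independent and, upstream of it, the one-dimensionality of $\ker\compA$; granted those, everything else is the bookkeeping displayed above.
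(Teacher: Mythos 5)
Your proof is correct, and it rests on exactly the same two structural facts as the paper's Appendix~B motivation: $\compA$ annihilates constants (so $\compAsnok(J/\pert)=e_j\ett^T$), and the first and last rows of $S$ are consistent first-derivative stencils (so the corners of $\tfrac{1}{\pert}(S\ett)(S\ett)^T$ vanish). The only real difference is how the $\pert$-independent part $K_j$ is produced. The paper (for $j=0$ only) writes it down concretely as the bordered matrix whose nonzero block is $\Adel^{-1}$, the inverse of the trailing principal submatrix of $\compA$, and checks the product block by block; you instead characterize $K_j$ as the unique solution of $\compA K=I-e_j\ett^T$ normalized by $e_j^TK_j=0$, realized via the pseudoinverse as $(I-\ett e_j^T)\compA^{+}(I-e_j\ett^T)$, which works uniformly in $j$. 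The two constructions give the same matrix (the paper's $K_0$ satisfies $e_0^TK_0=0$ and $\compA K_0=I-e_0\ett^T$), but your route has the advantage of making explicit the one hypothesis both arguments silently need --- that $\ker\compA$ is exactly $\mathrm{span}(\ett)$, which for the paper is the equivalent statement that $\Adel$ is invertible (the two are equivalent here since $\compA\ge 0$). The paper's version buys an explicit formula one can inspect in the $(2,0)$ example; yours buys generality in $j$ and a cleaner existence/uniqueness argument. No gaps.
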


\noindent
Proposition~\ref{propJK}  is motivated in Appendix~\ref{qex}.
In Table~\ref{qconst} below we provide the value of $\q$ for all  second derivative operators   considered in this paper.
The wide-stencil operators  are given by $D_2=D_1^2$, where  $D_1$ has the order of accuracy  (2,1), (4,2), (6,3) or (8,4), paired as (interior order, boundary order). For these operators, the $\q $ values are  obtained directly from the matrix $\PH $.
For the narrow-stencil operators, the $\q $ values are computed according to Proposition~\ref{propJK}.
All examples in Table~\ref{qconst}, 
 except the narrow (2,0) order operator, 
refers to operators % (with diagonal $\PH $ matrices)
 given in %Appendix C in
 \cite{Mattsson2004503}.

\begin{table}[H]
\centering
\begin{tabular}{|clll|}\hline
{Order}&{Type}&$\q h$&{Comment}\\\hline
2,0&{wide}&2&\\
4,1&{wide}&$\frac{48}{17}\approx2.8235$&\\
6,2&{wide}&$\frac{43200}{13649}\approx3.1651$&\\
8,3&{wide}&$\frac{5080320}{1498139}\approx3.3911$&\\\hline
2,0&{narrow}&1&{See Eq. }\eqref{2nd}\\
2,1&{narrow}&2.5&\\
4,2&{narrow}&3.986391480987749   & $(N=8)$ \\
6,3&{narrow}&5.322804652661742 &$(N=12)$\\
8,4&{narrow}&633.69326893357&$(N=16)$\\\hline
\end{tabular}
\caption{The $\q$-values (scaled with $h$) for various second derivative operators.}
\label{qconst}
\end{table}

\begin{remark} The SBP operators with interior order 6 and higher have free parameters, and  if those parameters are chosen differently than in  \cite{Mattsson2004503}, that will affect  $\q$. 
\end{remark}

\begin{remark}\label{RemNonDual}

%Note that t
The quantity $\q$ has nothing to do with %whether the penalty parameters yield 
dual consistency, % or not, 
but 
indicates how the penalty should be chosen to give energy stability.
As an example, consider solving the scalar problem presented below in \eqref{primalSCALAR} with  Dirichlet boundary conditions, using the
scheme \eqref{primalDiscScalar}.
Using %approximately 
the same 
technique as in Section~\ref{StabComp}, we find that
the stability demands for the (left) 
penalty parameter $\taul$, in three special cases of $\sigmal$, are
\begin{align*}
&\text{Dual consistent (see Eq. \eqref{Dirichlet})}&&\sigmal=-\varepsilon&&\taul
\leq-a/2-\varepsilon\q\\
&\text{Method 1 (dual inconsistent)}&&\sigmal=0&&\taul
\leq-a/2-\varepsilon\q/4\\
&\text{Method 2 (dual inconsistent)}&&\sigmal=\varepsilon&&\taul
\leq-a/2.
\end{align*}
The two latter %choices 
approaches
are frequently used %(although the stability limit is defined differently), 
but they do not yield dual consistency.

\end{remark}

%\section{Comparison with/to previous results, examples, special cases}
%\section{Examples}% and comparisons with previous results}
\section{Examples and numerical experiments}
\label{ExNum}

In this section, we  give %some concrete 
a few concrete 
examples of  the derived penalty parameters and perform some numerical simulations. 
We  demonstrate that these penalty parameters give superconvergent functional output not only for the wide second derivative operators but also for the narrow ones.
The following procedure  is used:
\begin{itemize}
\item[i)] Consider a continuous problem on the form \eqref{ParaSyst}, where %it is required that 
$\GL=\factorL\paraE$ and $\GR=\factorR\paraE$
are required.
 Identify $\bigA$ and $\bigBL,\bigBR$ according to \eqref{LiteViktigare}.

\item[ii)] Factorize $\bigA$ as $\bigA=\bigX\bigRot \bigX^T,$ according to \eqref{ALambdaX}, where 
$\bigX$  must be non-singular. 
\item[iii)]

Compute $\bigJL$ and $\bigJR$. From  \eqref{RLrelation} we see that  $\bigJL$ is  the first $\sizebig_+\times \sizebig_+$ part of $\bigBL\bigX^{-T}$, and correspondingly, that $\bigJR$ is  the last $\sizebig_-\times \sizebig_-$ part of $\bigBR\bigX^{-T}$, as
\begin{align}\label{nyRLrelation}
\bigBL\bigX^{-T}\hspace{-2pt}=\hspace{-1pt}\left[\hspace{-2pt}\begin{array}{ccc}\bigJL& \bigNL& \bigJL\bigRL\end{array}\hspace{-2pt}\right],&&\bigBR\bigX^{-T}\hspace{-2pt}=\hspace{-1pt}\left[\hspace{-2pt}\begin{array}{ccc}\bigJR\bigRR&\bigNR&\bigJR\end{array}\hspace{-2pt}\right].\end{align}

\item[iv)]
The problem \eqref{ParaSyst} is discretized in space using the scheme  \eqref{PrimalCOMPACT}. Rearranging the terms
in the scheme yields $\nsol _t+\lindisc  \nsol =\RHS $,
where $\lindisc $ is given in \eqref{LhDiscCompact}, and where
\begin{align*}
\RHS =\fh -\Pbar^{-1}(e_0 \otimes\taul +S^Te_0 \otimes\sigmal )\gl-\Pbar^{-1}(e_N \otimes\taur+S^Te_N \otimes\sigmar)\gr.
\end{align*}
The  penalty parameters $\taul$, $\sigmal$, $\taur$ and $\sigmar$ are 
specified in  Theorem~\ref{ParaSystPen}.
 \item[v)]
 If $\sol _t=0$, we have a stationary problem and the linear system $\lindisc  \nsol =\RHS $ must be solved. For the time-dependent cases,
 we use the method of lines and discretize $\nsol _t+\lindisc  \nsol =\RHS $ %\eqref{ODE} 
 in time using a suitable solver for ordinary differential equations.
\end{itemize}
\begin{remark}
When we have a hyperbolic problem, 
step (i) is omitted and step (iv) is modified such that the scheme \eqref{HypSystDisc} is used with penalty parameters given %by \eqref{OptPenHyp} 
in Theorem~\ref{HypSystPen}.
\end{remark}

In the simulations, we are interested  in the functional  error %$\Jfel=\left|\funcdisc(\nsol )-\func(\sol )\right|$, 
$\Jfel=\funcdisc(\nsol )-\func(\sol )$, 
where  $\func(\sol )=\inprod{\weight ,\sol }$, $\funcdisc(\nsol )=\inprod{ \gh ,\nsol }_\discmark$  
and %$(\gh )_i(t)=\weight (x_i,t)$, 
$\gh_i(t)=\weight (x_i,t)$, 
but of course also
in the solution error $\fel$, where $\fel_i(t)=\nsol _i(t)-\sol (x_i,t)$.
% In addition to the accuracy of the numerical solution,
 We also investigate  the spectra of $\lindisc $, % in \eqref{LhDiscCompact}, 
 that is the eigenvalues $\lambda_j$ of $\lindisc ,$ with $j=1,2,\hdots,\sizepara(N+1)$. 
 Here we are in particular interested in %the quantities 
 the spectral radius
 $\maxeig=\max_j(|\lambda_j|)$
 and in
 $\mineig=\min_j(\Re(\lambda_j))$. (For time-dependent problems $\maxeig\Delta t\lesssim
C$ is a crude estimate of the stability regions of explicit  Runge-Kutta schemes, and %if $\maxeig$ grows to big one is forced to use unnecessary small time-steps.
thus $\maxeig$
can be seen as a measure of %(numerically induced) 
stiffness.
The eigenvalue with the smallest real part,
$\mineig$, determines how fast a time-dependent solution converges to a steady-state solution, see \cite{SteadyState}.) 
Ideally, the penalties are chosen such that $\maxeig$ is kept small   while $\mineig$ is maximized.
For steady problems or when using implicit 
 time solvers, other properties (e.g. the condition number)
might be of greater interest.

 We start by investigating a couple of scalar cases in some detail, then give an example %(from Section~\ref{SysSec}) 
 of a system with a solid wall type of boundary condition.

\subsection{The scalar case}
\label{SecScalar}

Consider the scalar advection-diffusion equation, % in one dimension,
\begin{align}
\label{primalSCALAR}
\begin{array}{rll}\scalaru_t+a\scalaru_x-\varepsilon \scalaru_{xx}=&\hspace{-7pt}\force ,\hspace{20pt}&x\in[0,1],\hspace{20pt}\vspace{4pt}\\\al \scalaru+\bl \scalaru_x=&\hspace{-7pt}\gl,&x= 0,\vspace{4pt}\\\ar \scalaru+\br \scalaru_x=&\hspace{-7pt}\gr,&x= 1,
\end{array}
\end{align}
valid for $t\geq0$, with initial condition $\scalaru(x,0)=\scalaru_0(x)$ and
where $\varepsilon>0$. Using \eqref{LiteViktigare} yields
\begin{align*}
\bigA=\left[\begin{array}{cc}a&-\varepsilon\\-\varepsilon&0
\end{array}\right], 
&&\bigBL=\left[\begin{array}{cc}\al&\bl
\end{array}\right],&&\bigBR=\left[\begin{array}{cc}\ar&\br
\end{array}\right].
\end{align*}
%The matrix $\bigA$ can be factorized as
In this case, the factorization of the matrix $\bigA$ can be parameterized as
\begin{align}\label{ScalarFactorization}
\bigA=\bigX\bigRot \bigX^T=\left[\begin{array}{cc}\frac{a+\rot}{2s_1}&\frac{a-\rot}{2s_2}\\\frac{-\varepsilon}{s_1}&\frac{-\varepsilon}{s_2}\end{array}\right]\left[\begin{array}{cc}\frac{s_1^2}{\rot}&0\\0&-\frac{s_2^2}{\rot}\end{array}\right]
\left[\begin{array}{cc}\frac{a+\rot}{2s_1}&\frac{a-\rot}{2s_2}\\\frac{-\varepsilon}{s_1}&\frac{-\varepsilon}{s_2}\end{array}\right]^T,
\end{align}
with $\rot>0$.
In particular, if $\rot=\sqrt{a^2+4\varepsilon^2}$ and if $s_{1,2}^2=\rot(\rot\pm a)/2$, then
 the above  factorization  is the eigendecomposition of $\bigA$.
The discrete scheme mimicking \eqref{primalSCALAR} is
\begin{align}
\label{primalDiscScalar}
\begin{split}
\scalarv_t+aD_1\scalarv-\varepsilon D_2\scalarv=\fh 
&+\PH ^{-1}( \taul e_0 +\sigmal S^Te_0) \left(\al \scalarv_0+\bl(S \scalarv)_0-\gl\right)\\
&+\PH ^{-1}(\taur e_N +\sigmar S^Te_N) \left(\ar \scalarv_N+\br(S \scalarv)_N-\gr\right).
\end{split}
\end{align}
To compute the penalty parameters,  $\bigJL=\frac{s_1}{\rot}\left(\al+\bl\frac{a-\rot}{2\varepsilon}\right)$ and $\bigJR=-\frac{s_2}{\rot}\left(\ar+\br\frac{a+\rot}{2\varepsilon}\right)$ are needed, which we obtain using \eqref{nyRLrelation}.
Theorem~\ref{ParaSystPen} now yields %. We get
% (using \eqref{TauUtanMu}) compute
\begin{align}\label{OptPenScalar}
\begin{split}
\taul&=\frac{-\frac{a+\rot}{2}-\q\varepsilon}{
\al+\bl\frac{a-\rot}{2\varepsilon}-\q \bl },\hspace{45pt}\sigmal=\frac{-\varepsilon}{
\al+\bl\frac{a-\rot}{2\varepsilon}-\q \bl },\\
\taur&=\frac{\frac{ a-\rot}{2}-\q\varepsilon}{\ar+ \br\frac{ a+\rot}{2\varepsilon}+\q\br},\hspace{41pt}\sigmar=\frac{\varepsilon}{\ar+ \br\frac{ a+\rot}{2\varepsilon}+\q\br}.
\end{split}
\end{align}
Formally $0<\rot<\infty$ is necessary (since in the limits $\bigX$ becomes singular), but 
as long as %care is taken such that 
the number of imposed boundary condition does not change or the penalty parameters go %are going
 to infinity, 
we can allow
%where in fact $\rot\geq0$ leads to stability.
$0\leq\rot\leq\infty$.
Below we present some special cases:

For Dirichlet boundary conditions we have
 $\al=\ar=1$ and $\bl=\br=0$. In this case  the penalty parameters in
\eqref{OptPenScalar} become
\begin{align}\label{Dirichlet}
\taul&=-\frac{a+\rot}{2}-\q\varepsilon,&\sigmal&=-\varepsilon,&
\taur&=\frac{ a-\rot}{2}-\q\varepsilon,&\sigmar&=\varepsilon,
\end{align}
with $0\leq\rot<\infty$. 
Translating the penalty parameters %$\sigma_{L,R}$, $\tau_{L,R}$ 
for the advection-diffusion case  in \cite{Berg20126846}
 to the form used here, 
it can be seen that % the results 
they are exactly the same. % (although here compact operators are included).
  
With $\al=\frac{|a|+a}{2}$, $\bl=-\varepsilon$ at the left boundary and
  $\ar=\frac{|a|-a}{2}$, $\br=\varepsilon$ at the right boundary, 
we have boundary conditions of a low-reflecting far-field type. In this case,
    the penalty parameters in
\eqref{OptPenScalar} become
  \begin{align}\label{OptPenScalarFarField}
\taul&=-\frac{\frac{\rot+a}{2}+\q\varepsilon}{
\frac{\rot+|a|}{2}+\q \varepsilon },&&\sigmal=\frac{-\varepsilon}{
\frac{\rot+|a|}{2}+\q \varepsilon},&
\taur&=-\frac{\frac{ \rot-a}{2}+\q\varepsilon}{\frac{\rot+|a|}{2}+\q\varepsilon},&&\sigmar=\frac{\varepsilon}{\frac{\rot+|a|}{2}+\q\varepsilon}
\end{align}
and we see that in the limit $\rot\to\infty$, we obtain $\taul=-1$, $\sigmal=0$, $\taur=-1$ and $\sigmar=0$.
This particular choice  corresponds to the penalty $\Sigma=-I$ used in
\cite{Berg201341,Berg2014135} for systems with boundary conditions of far-field type.

\begin{remark}

If $\varepsilon=0$ in \eqref{primalSCALAR} we get the %advection
transport
 equation, and then only one boundary condition should be given instead of two. 
That means that %(depending on the sign of $a$) either $\Rot_+$ or $\Rot_-$ will be zero and that 
the derivation of the penalty parameters must be redone accordingly. 
See
\cite{Berg20126846}, where this case is covered.

\end{remark}

%\subsection{Varying viscosity}
\begin{remark}
\label{RemVarying}

The results can be extended to the case of varying %viscosity. 
coefficients. 
Consider the scalar diffusion problem $\scalaru_t-(\varepsilon \scalaru_{x})_x=\force $
with Dirichlet boundary conditions, where $\varepsilon(x)>0$. 
Following \cite{Mattsson2012}, we define a narrow-stencil operator mimicking $\partial/\partial x(\varepsilon\partial/\partial x)$  as
\begin{align*}
%\begin{array}{lclcl}\vspace{4pt}
D_2^{(\varepsilon)}=\PH ^{-1}\left(-\compA^{(\varepsilon)}+(\varepsilon(1)E_N-\varepsilon(0)E_0)S\right)%&&\hspace{23pt}\compA^{(\varepsilon)}=S^TM^{(\varepsilon)}S\geq0%\end{array}
\end{align*}
where $\compA^{(\varepsilon)}$ is symmetric and positive semi-definite.
It is assumed that $D_2^{(\varepsilon)}=\varepsilon D_2$ holds when $\varepsilon$ is constant. 
The discrete problem becomes
\begin{align*}
\scalarv_t-D_2^{(\varepsilon )}\scalarv=\fh 
+\PH ^{-1}( \taul e_0 +\sigmal S^Te_0) \left(\scalarv_0-\gl\right)+\PH ^{-1}(\taur e_N +\sigmar S^Te_N) \left(\scalarv_N-\gr\right).
\end{align*}
The continuous  problem is self-adjoint, so for dual consistency $\lindisc ^*= \PH^{-1}\lindisc ^T\PH=\lindisc $ is needed,
which is fulfilled 
 if $\sigmal=-\varepsilon(0)$ and $\sigmar=\varepsilon(1)$. 
Moreover, using $\compA^{(\varepsilon)}\geq\varepsilon_{\min} \compA$,
where $\varepsilon_{\min}=\min_{x\in[0,1]}\varepsilon(x)$, 
it can be shown
that the discretization will be stable if we choose
$\taul  \leq-\frac{\q}{\varepsilon_{\min}}\varepsilon(0)^2$ and  $\taur\leq-\frac{\q}{\varepsilon_{\min}}\varepsilon(1)^2$.
(The superconvergence for functionals has been confirmed numerically and the resulting "best" choices of $\taul$ and $\taur$ are similar to what we obtain in the constant case considered below.)

\end{remark}

\subsubsection{The stationary heat equation with Dirichlet boundary conditions}

We consider the 
heat equation with Dirichlet boundary conditions, i.e.
problem \eqref{primalSCALAR}
with $a=0$, $\al,\ar=1$ and $\bl,\br=0$, which we solve using the scheme \eqref{primalDiscScalar}, with the penalty parameters %in \eqref{OptPenScalar} become 
 given by
\eqref{Dirichlet}, also with $a=0$. 
To isolate the errors originating from the spatial discretization, we first look at  the
steady problem. 
Thus we let $\scalaru_t=0$ and solve $-\scalaru_{xx}=\force (x)$ numerically. The 
resulting quantities  
$\maxeig$ and $\mineig$, %together with 
the solution error $\|\fel\|_\discmark$ and the functional error $|\Jfel|$ are given (as functions of the  parameter $\rot$) 
in Figure~\ref{FigA}.
The spectral radius $\maxeig$ grows with $\rot$, so we do not want $\rot\to\infty$. On the other hand, the decay rate $\mineig$ shrinks with $\rot$ so $\rot\to0$ should also be avoided. The errors tend to decrease with increasing $\rot$ (the errors naturally varies slightly depending on  the choice of $\force$ and $\weight$, but the example in Figure \ref{FigA} shows  a typical behavior). Thus the demand for accuracy is conflicting with the demand of keeping $\maxeig$ small  (the aim to maximize $\mineig$ is met before the aim to minimize the errors  and is therefore not a limiting factor in this case). Empirically we have found that a good compromise, which gives small errors without increasing the  spectral radius dramatically, is obtained using  $\rot\approx \q  \varepsilon$.

\begin{figure}[H]
% FigA produced by FirstFigs with
% a=0;	ep=1;	alpL=1;	betL=0;	alpR=1; 	betR=0;
% N=64; 	ord=6;	rotlist=10.^(-4:.75:8);	xint=[0 1];     
% funkis='cos';	fpar=30;	gfunk='cos';  
\centering
\subfigure[Interior order 6, wide operator]{\includegraphics[width=.5\textwidth]{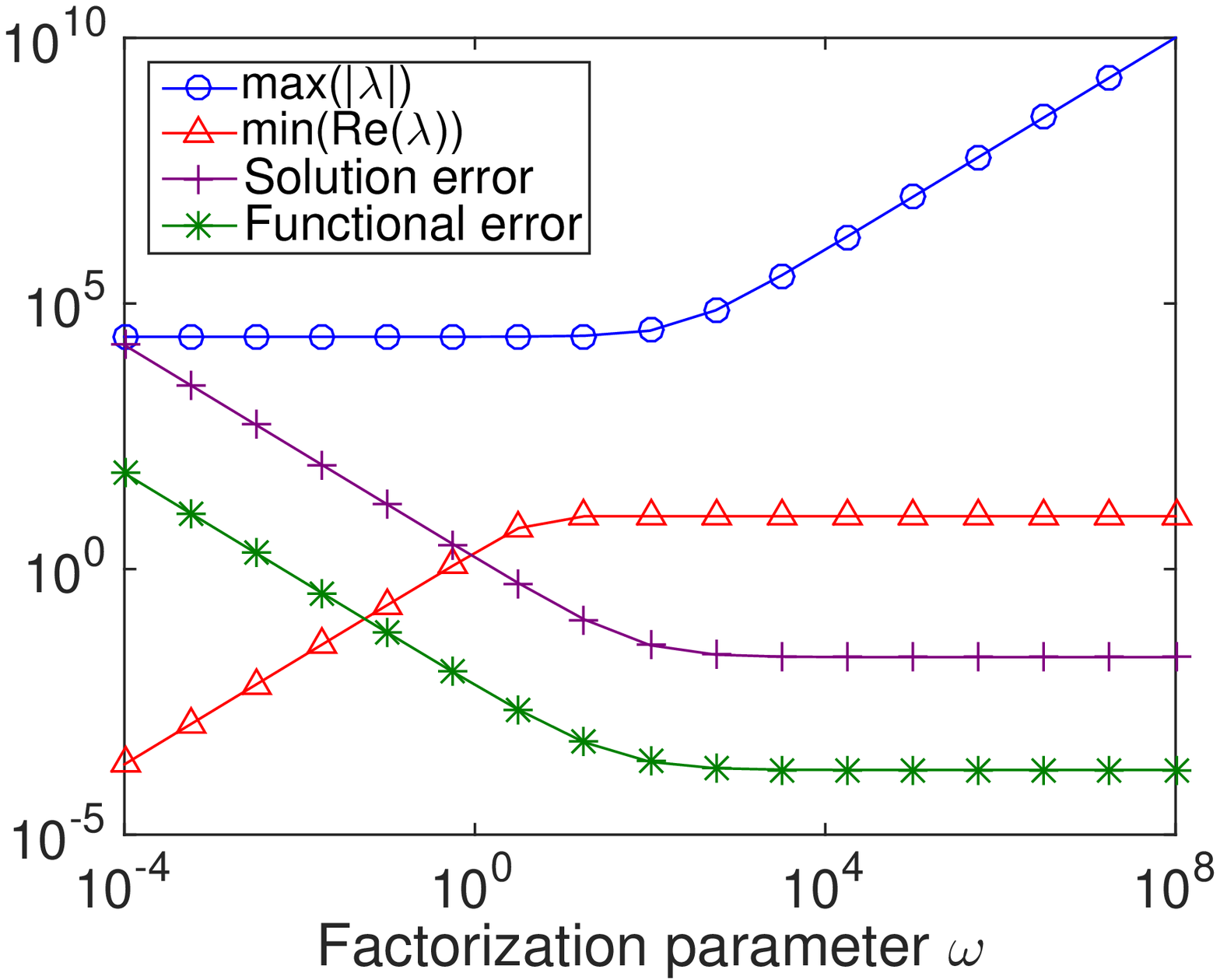}}%
\subfigure[Interior order 6, narrow operator]{\includegraphics[width=.5\textwidth]{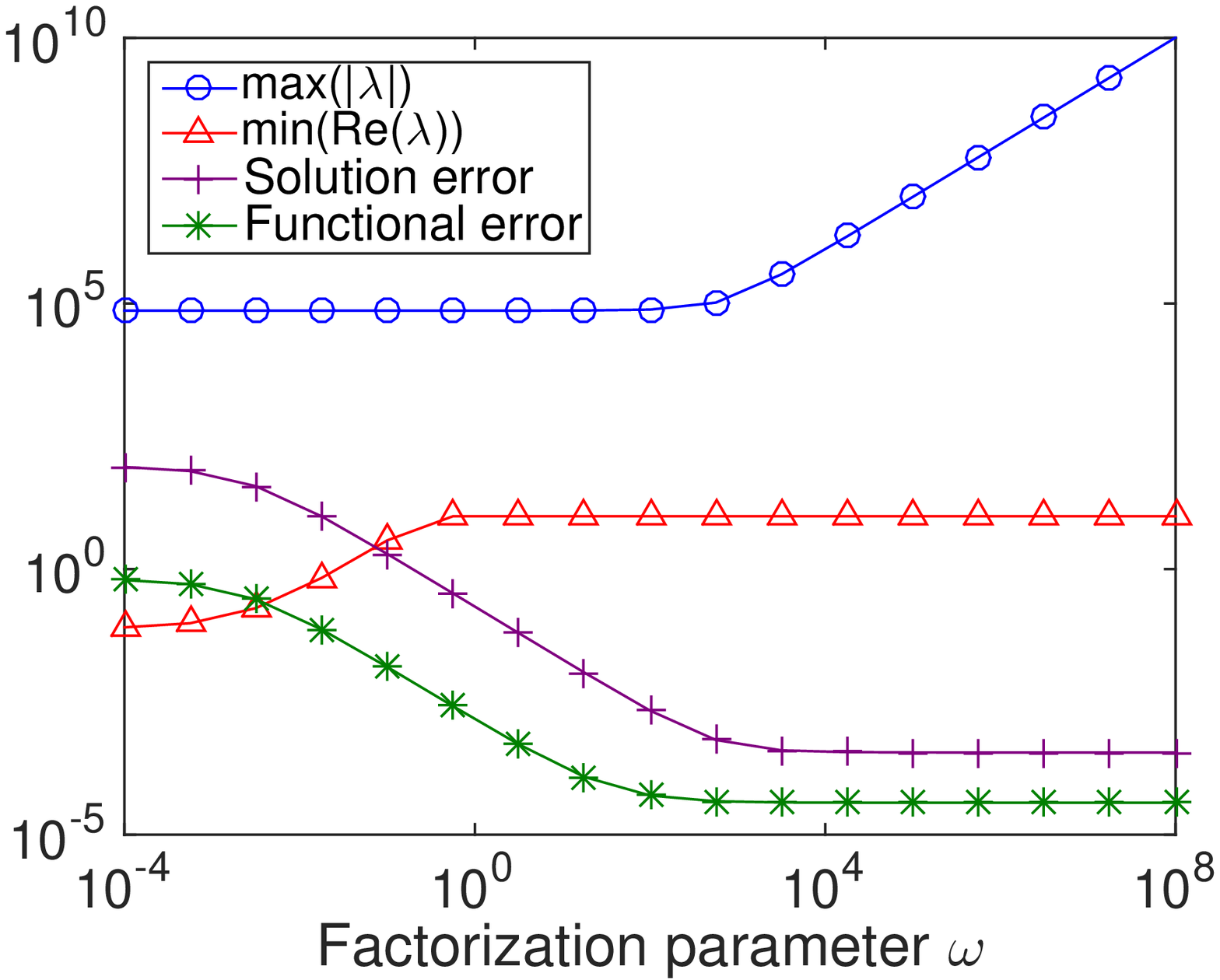}}
\caption{
Properties of $\lindisc $ and errors when solving $-\scalaru_{xx}=\force (x)$ with Dirichlet boundary conditions.
The number of grid points is $N=64$, the second derivative operator is 6th order accurate in the interior, and is either wide
or narrow.
Here $\scalaru(x)=\weight (x)=\cos(30x)$.
}\label{FigA}
\end{figure}

From this example, we make an observation. If we would  use the eigenfactorization, we would  have $\rot=\sqrt{a^2+4\varepsilon^2}=2$. However, in Figure~\ref{FigA} we see that
that choice is not especially good, since the errors then become much larger than if using  $\rot=\q\varepsilon$, which is approximately  200 and 340, respectively.
%In fact, i
In some cases, the difference in accuracy is so severe that the choice of factorization parameter $\rot$ affects the convergence rate.
%
%For example, f
For the narrow operator with the order (2,0), the errors behave as  $\|\fel\|_\discmark\sim h^{3/2}$
%of  $\|e\|_\discmark$ is %reduced to 
%1.5 
when using %$\rot=2\varepsilon$, 
$\rot\sim1$,
whereas we obtain the expected
 $\|\fel\|_\discmark\sim h^{2}$ % 2 
when using %$\rot=4\varepsilon/h$.
$\rot\sim1/h$.
Similar behaviors are  observed also for narrow operators of higher order, see below. %Figure~\ref{FigB} .

In Figure~\ref{FigB}(a) the errors $\|\fel\|_\discmark$ for the operators with interior order  6 are shown.  
For the narrow scheme, the convergence rate is 4.5 
when using $\rot=2\varepsilon$ and 5.5 when using  $\rot=\q\varepsilon$. 
 % (for a dual inconsistent scheme we would in this case expect the   convergence rate 5).  
 For the wide scheme, the order is 4 in both cases, but the error constant changes.
In the 8th order case, Figure~\ref{FigB}(b), the %only thing that changes are the error constants, in this case by about a factor of 2000 for the compact scheme.
convergence rates are not affected, but in the narrow case
% the error constantdiffers by about a factor  2500.
% the error constantis about 2500 times smaller when using   $\rot=\q\varepsilon$.
 the errors are around 2500 times smaller when using   $\rot=\q\varepsilon$.
 In this example,
 the functional errors are not as sensitive to $\rot$ as the solution errors.
 In the 6th order case,  
%not shown here,
%does for this example not vary so much depending on the operator. In all four cases, 
the convergence rates are slightly better than the predicted $2p=6$, both for the wide and the narrow schemes, see Figure~\ref{FigB2}(a). For the 8th order case, see Figure~\ref{FigB2}(b), 
the convergence rates are in all cases higher than $2p=8$. 
 %for the narrow scheme over 10 (which is probably a pre-asymptotic behavior). 
 Thus
%Most importantly, we notice that 
the derived %dual consistent penalties
SAT parameters actually produce superconvergent functionals, also  for the narrow operators. 
\begin{figure}[H]
%{\kom FOR COS-COS.}
\centering
\subfigure[Interior order 6]{\includegraphics[width=.5\textwidth]{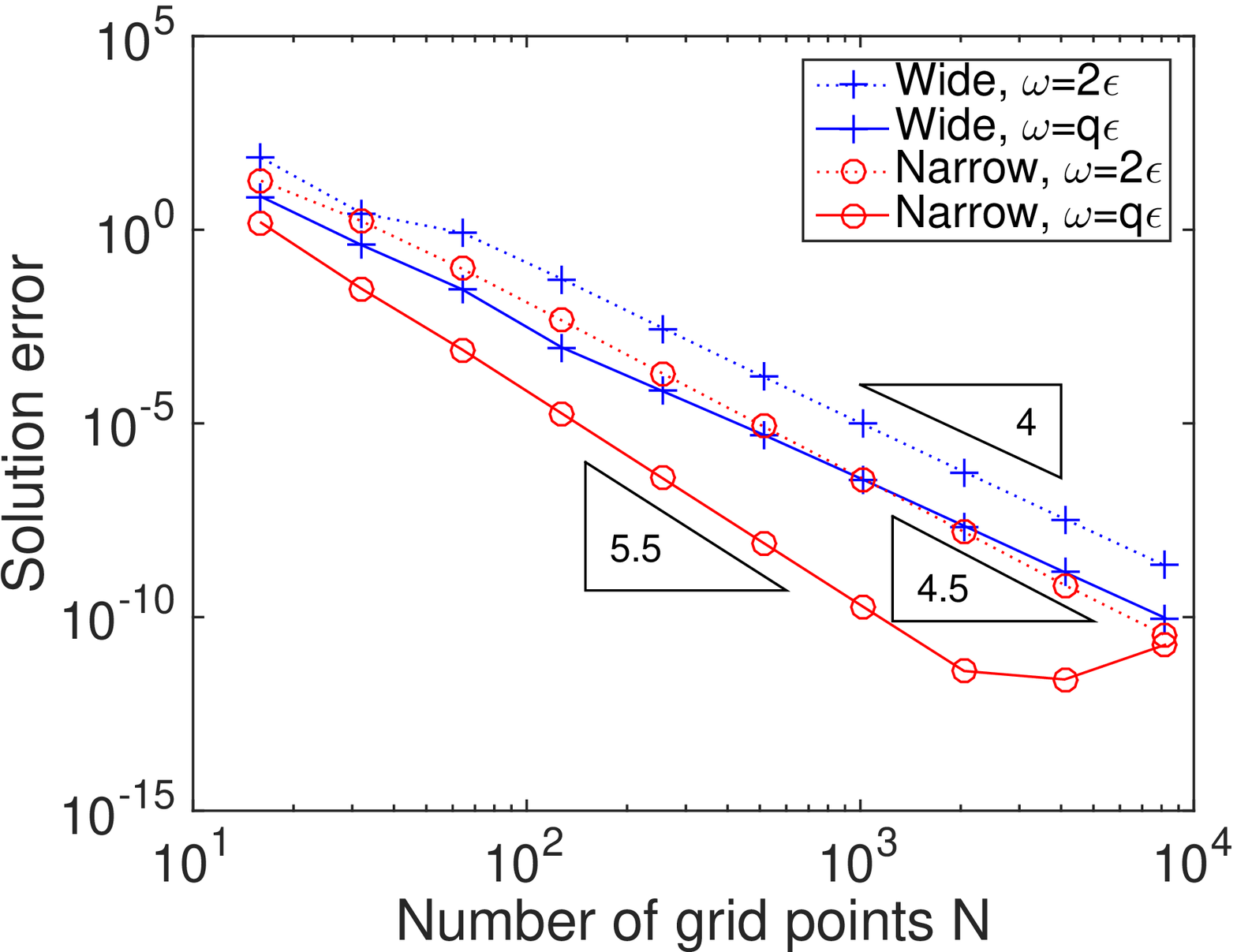}}%
\subfigure[Interior order 8]{\includegraphics[width=.5\textwidth]{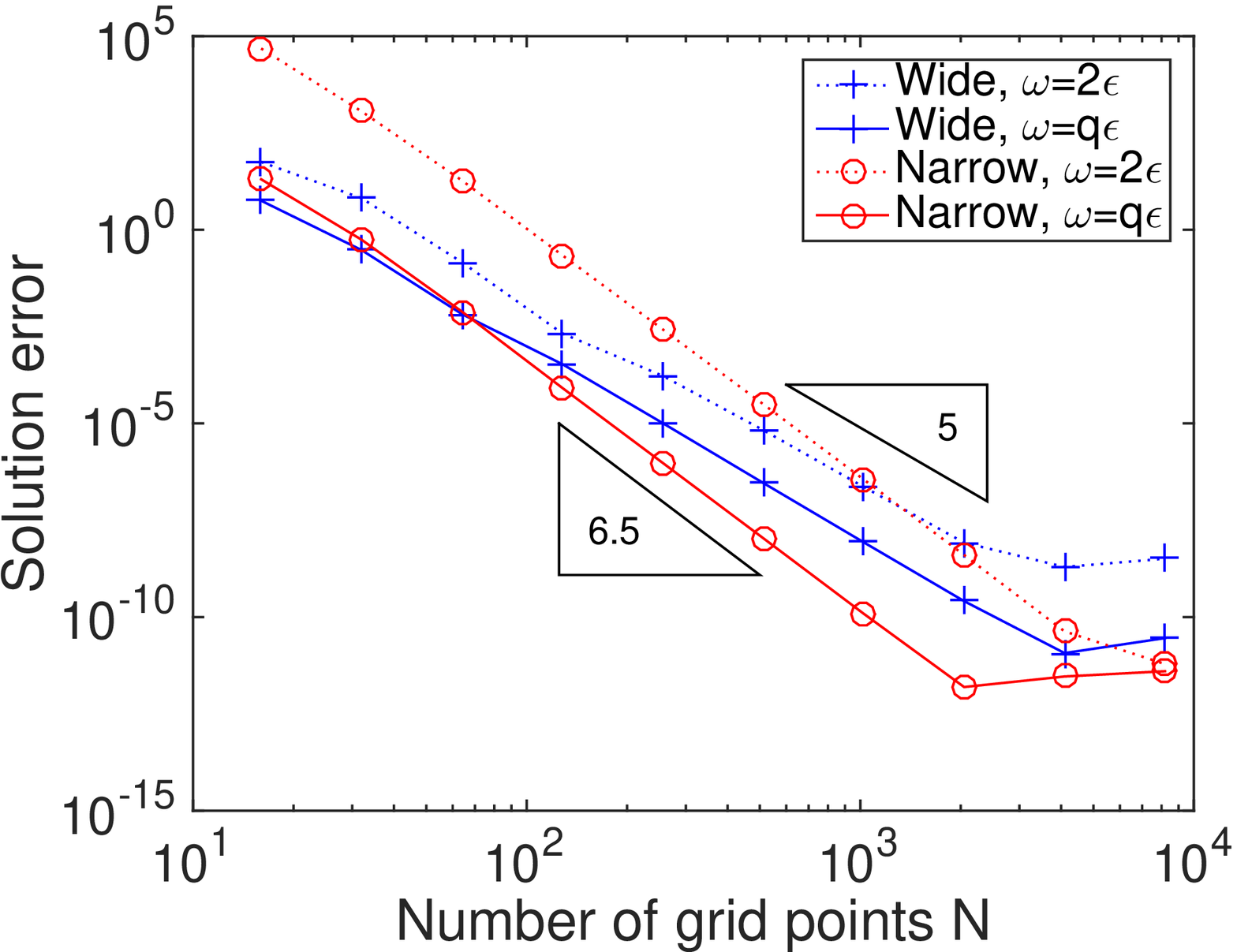}}
\caption{%In (a) and (b): 
The error $\|\fel\|_\discmark$, for $-\scalaru_{xx}=\force (x)$. The exact solution is $\scalaru=\cos(30x)$. %, as a function of number of grid points $N$.% In (c) and (d): The functional error, using the weight function $G(x)=\cos(30x)$.
}\label{FigB}
\end{figure}

 \begin{figure}[H]
%{\kom FOR COS-COS.}
\centering
\subfigure[Interior order 6]{\includegraphics[width=.5\textwidth]{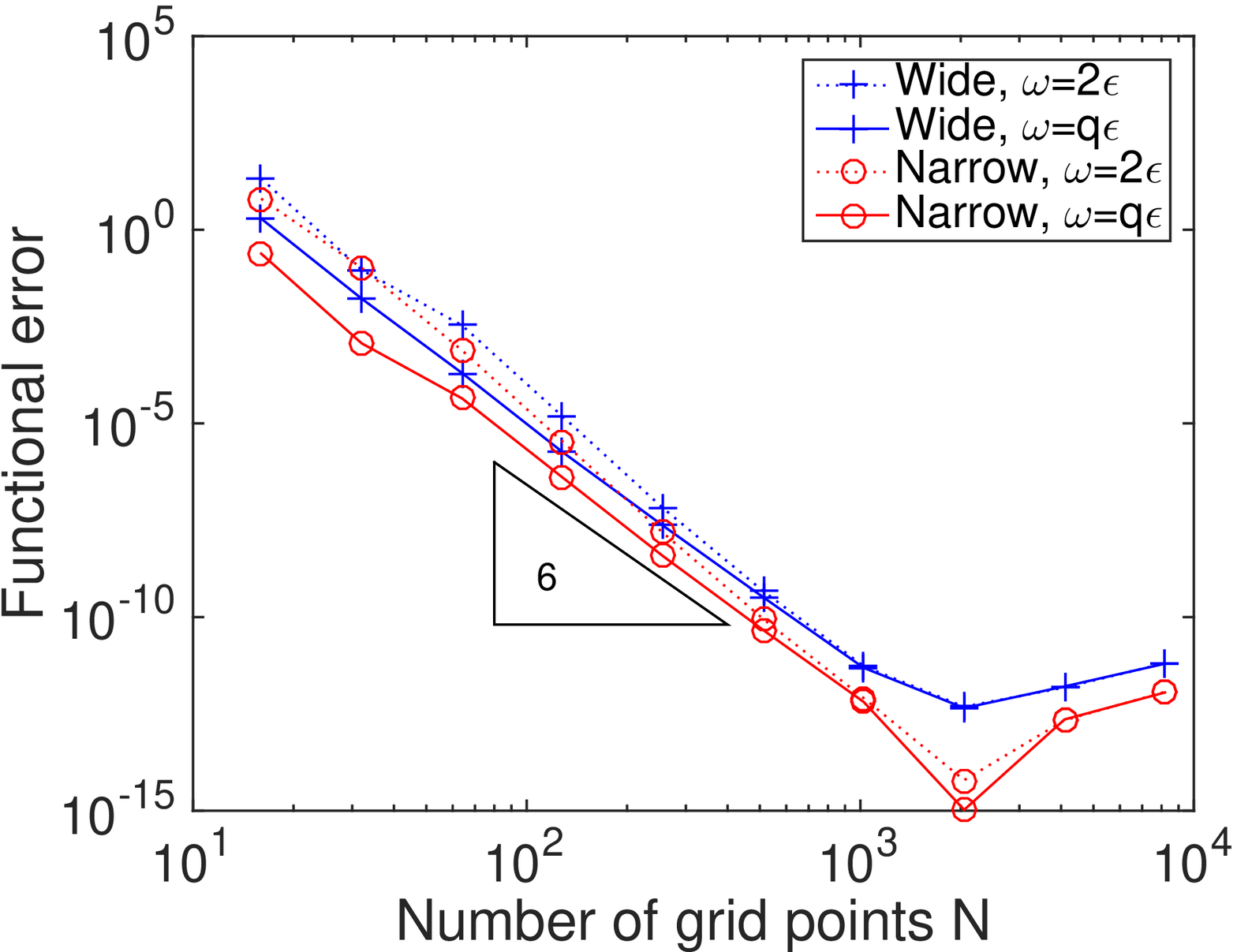}}%
\subfigure[Interior order 8]{\includegraphics[width=.5\textwidth]{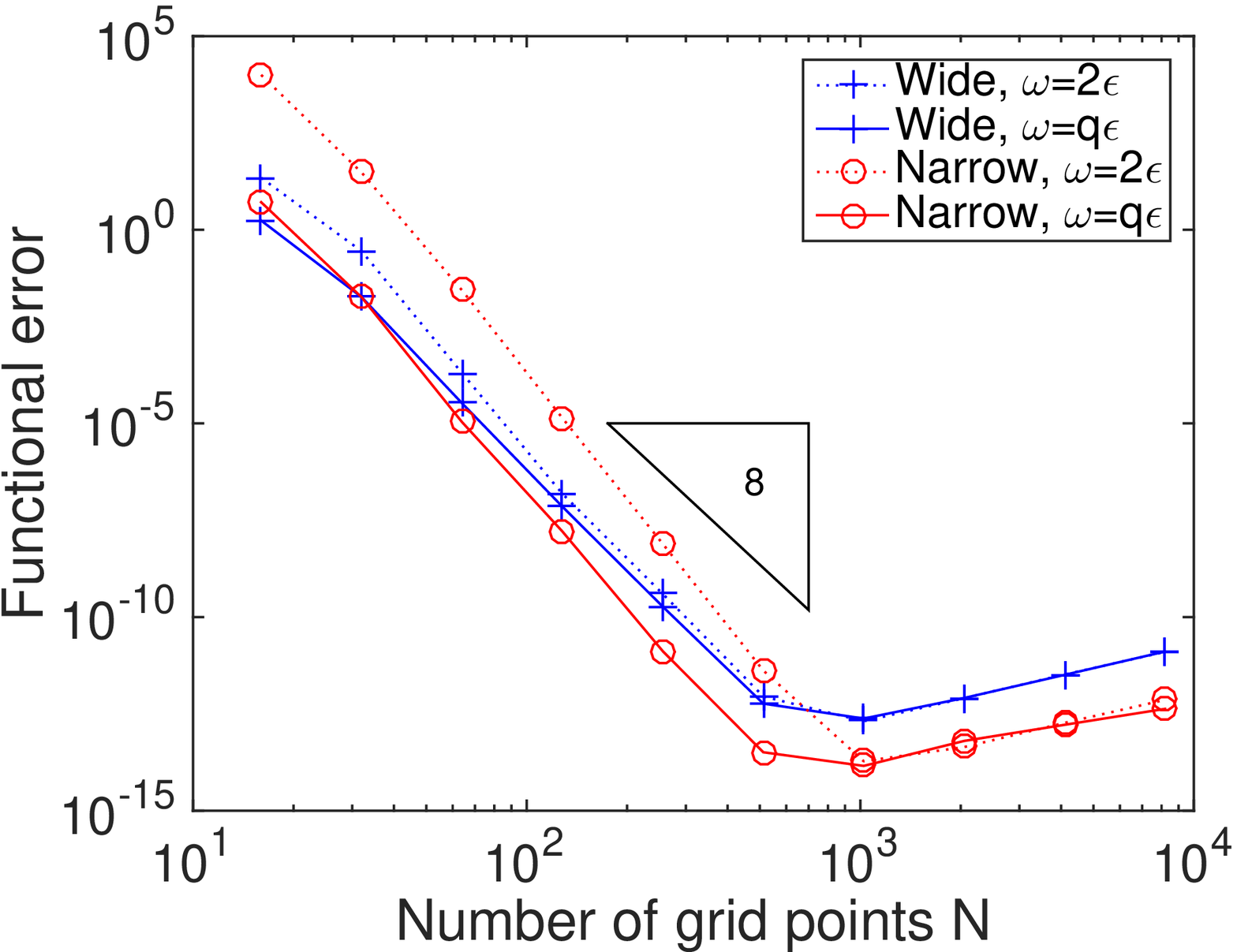}}
\caption{The functional error $|\Jfel|$, using the weight function $\weight (x)=\cos(30x)$.
}\label{FigB2}
\end{figure}

% \subsubsection{The time-dependent case}
\subsubsection{The time-dependent heat equation with Dirichlet boundary conditions}
 
Next, we consider the actual heat equation. We solve $\scalaru_t=\varepsilon \scalaru_{xx}+\force (x,t)$ %on $x\in[0,1]$ 
with %a manufactured 
$\varepsilon=0.01$ and
the 
exact solution  $\scalaru(x,t)=\cos(30x)+\sin(20x)\cos(10t)+\sin(35t)$. 
For the time propagation the classical 4th order accurate Runge-Kutta scheme  is used, with sufficiently small time steps, $\Delta t=10^{-4}$, such that the spatial errors dominate.
In Figure~\ref{Heat} the errors obtained using the narrow (6,3) order scheme 
 are shown as a function of time. 
%FirstFigs
%Ord=6, wc=c, ep=0.01, type= rk4, nsteps=10000
\begin{figure}[H]
\centering
\subfigure[Solution error $\|\fel\|_\discmark$]{\includegraphics[width=.5\textwidth]{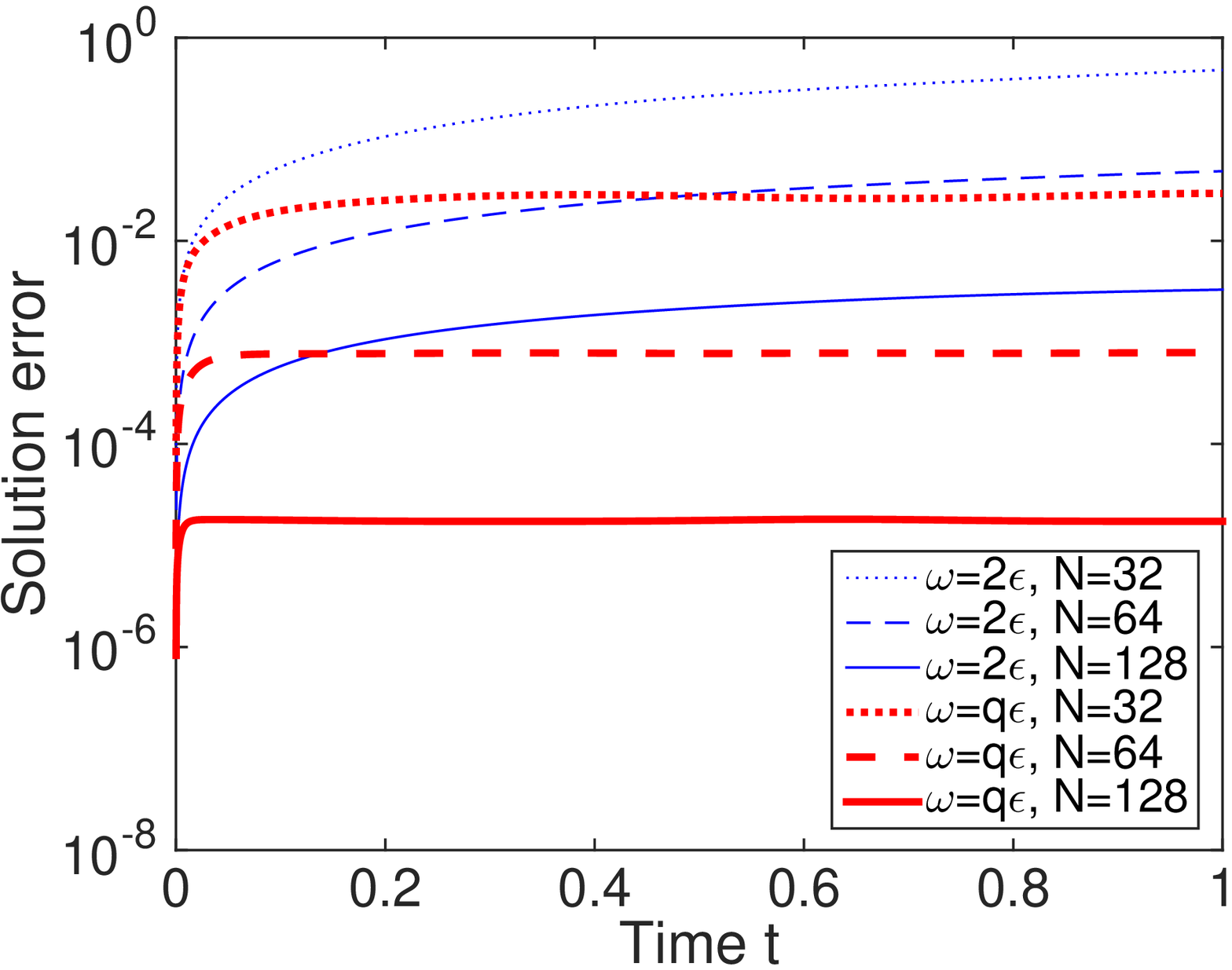}}%{HeatSol}}%
\subfigure[Functional error $|\Jfel|$ with $\weight (x)=1$]{\includegraphics[width=.5\textwidth]{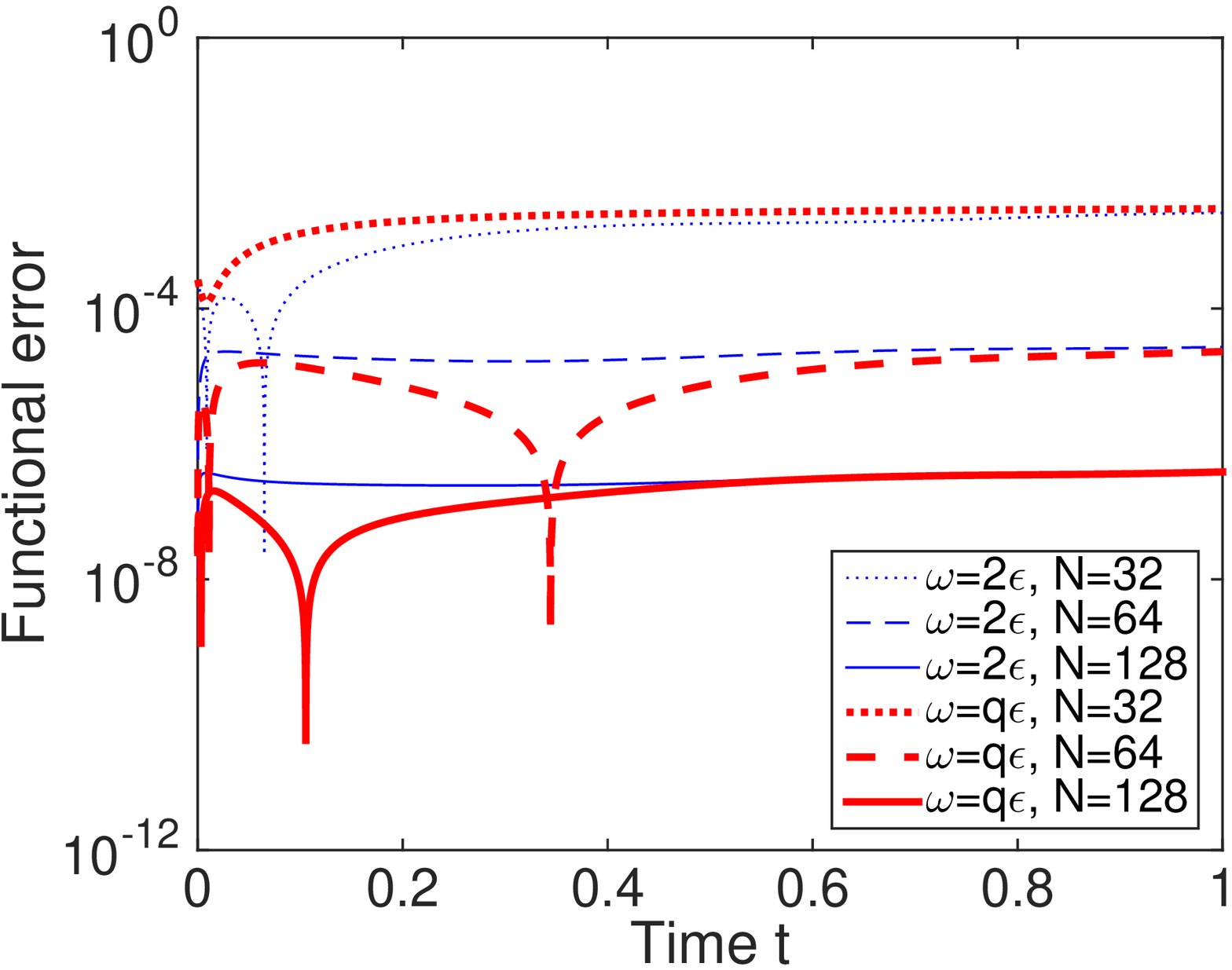}}%{HeatFunc}}
\caption{%The heat equation $u_t=\varepsilon u_{xx}+F$ solved %with the compact scheme of order (6,3). The exact solution is  $u(x,t)=\cos(30x)+\sin(20x)\cos(10t)+\sin(35t)$ and $\varepsilon=0.01$.
Errors when solving the heat equation
using the narrow (6,3) order  scheme. 
}\label{Heat}
\end{figure}
The corresponding 
spatial
order of convergence (at %the end 
time $t=1$) 
is shown in Table~\ref{HeatTable}. 
%Here we have used  the compact scheme of order (6,3).
The simulations
 confirm the steady results, namely that 
 both %the parameter choices 
 $\rot=2\varepsilon$ and 
  $\rot=\q\varepsilon$
 give superconvergent functionals
 but
 that
 choosing the %"optimal" chosen factorization parameter $\rot$ 
factorization parameter  as $\rot\sim\varepsilon/h$  
 improves the solution significantly
 compared to when using the eigendecomposition. 
\begin{table}[H]\centering
$\begin{array}{|r|cccc|cccc|}
\hline
&\multicolumn{4}{|c|}{\rot=2\varepsilon}&\multicolumn{4}{|c|}{\rot=\q\varepsilon}\\
N&\|\fel\|_\discmark&\text{Order}&|\Jfel|&\text{Order}&\|\fel\|_\discmark&\text{Order}&|\Jfel|&\text{Order}\\\hline
%32&0.480872&-&0.00258741&-&0.027167&-&0.00297238&-\\
%64&0.048501&3.3096&0.00002704&6.5804&0.000727&5.2238&0.00002315&7.0044\\
%128&0.003307&3.8743&0.00000038&6.1559&0.000016&5.5156&0.00000039&5.9058\\\hline
32&0.480872&-&0.00258741&-&0.029297&-&0.00297573&-\\
64&0.048501&3.3096&0.00002704&6.5804&0.000790&5.2121&0.00002315&7.0064\\
128&0.003307&3.8743&0.00000038&6.1559&0.000017&5.5131&0.00000039&5.9055\\\hline
\end{array}$
\caption{The errors and convergence rates at $t=1$ for the narrow (6,3) order scheme.}
\label{HeatTable}
\end{table}

\subsubsection{The heat equation with Neumann boundary conditions}
 
We solve $\scalaru_t=\varepsilon \scalaru_{xx}+\force (x,t)$ again,
but this time with Neumann boundary conditions, and the penalty parameters are now given by \eqref{OptPenScalar} with $a=0$, $\varepsilon=0.01$, $\al=\ar=0$ and $\bl=\br=1$.
In contrast to when having Dirichlet boundary conditions, the spectral radius $\maxeig$ does not depend so strongly on $\rot$
and therefore we can let $\rot\to\infty$. 
Figure~\ref{HeatNeumann} 
shows the convergence rates for
 the  schemes with interior order 6.  The 
exact solution is $\scalaru(x,t)=\cos(30x)$ and
for the time propagation  the implicit Euler method, with $\Delta t=1$, is used (this is more than enough
since the chosen $\scalaru$ does not depend on $t$).
We note that the 
convergence rates behaves similarly to the Dirichlet case. We could also have used $\rot=\q\varepsilon$ here, it gives the same convergence rates as $\rot=\infty$.
 \begin{figure}[H]
%FirstFigs
%FigID='heatNeumann'
%    a=0;
%    ep=0.01;
%    alpL=0; % Left boundary
%    betL=1;
%    alpR=0; % Right boundary
%    betR=1;
%type='eulerimp';
%     tend=100; % ok vantetid
%    nsteps=100;    
%    ord=6;
%     Nlist=[16 32 64 128 256 512 1024 2048 4096 8192];
%PENTYPES=['eigs';'inft';'eigs';'inft'];
%    farg=['b+:';'b+-';'ro:';'ro-'];
%    wctypes=['w';'w';'c';'c'];
\centering
\subfigure[Solution error $\|\fel\|_\discmark$]{\includegraphics[width=.5\textwidth]{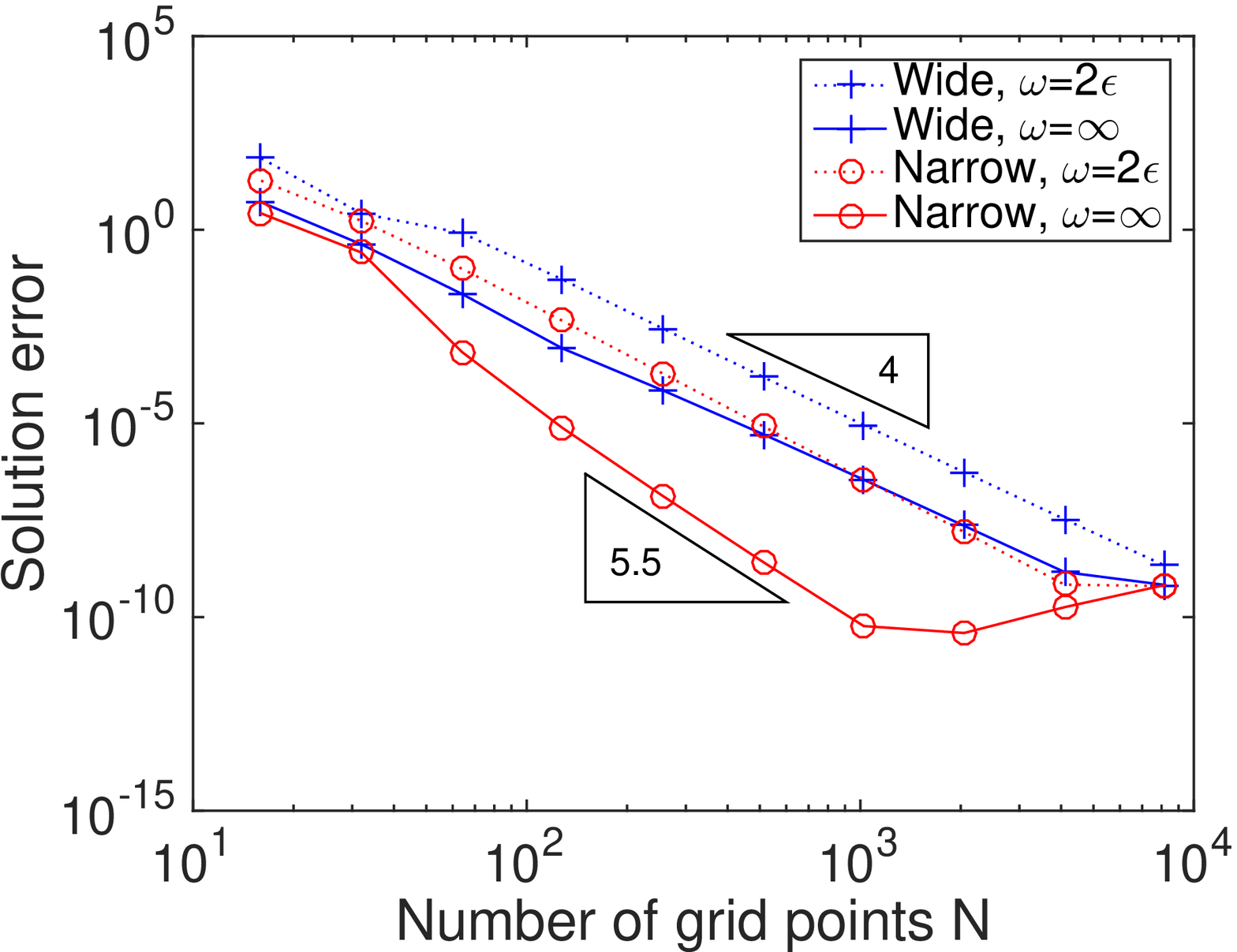}}%
\subfigure[Functional error $|\Jfel|$ with $\weight (x)=\cos(30x)$]{\includegraphics[width=.5\textwidth]{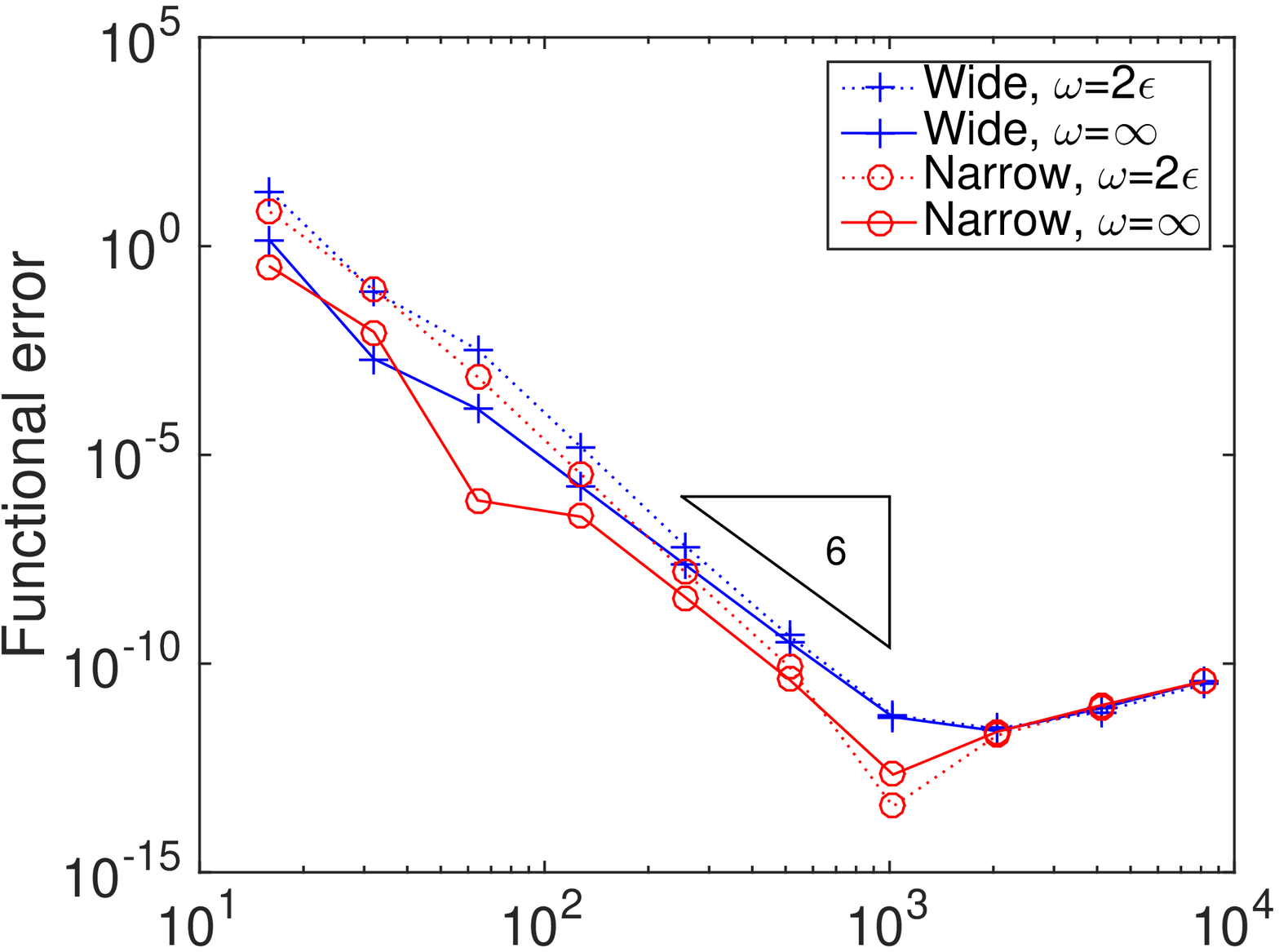}}
\caption{
Errors at time $t=100$ when solving the heat equation with Neumann boundary conditions
using the schemes with interior order 6. 
}\label{HeatNeumann}
\end{figure}

\subsubsection{The advection-diffusion equation with Dirichlet boundary conditions}

For simplicity we consider steady problems again, this time  $a\scalaru_x=\varepsilon \scalaru_{xx}+\force $. That is, we solve \eqref{primalSCALAR} 
using the scheme \eqref{primalDiscScalar},
both
with %neglected 
omitted
time derivatives.
 The penalty parameters
for Dirichlet boundary conditions %we have $\al=\ar=1$ and $\bl=\br=0$. In this case  
 are given in \eqref{Dirichlet}.

%We start %by considering 
%with 
First, we take a look at
an interesting special case, namely when %having 
$\force =0$. Then the exact solution is $\scalaru(x)=c_1+c_2\exp{(ax/\varepsilon)}$, where
the constants $c_1$ and $c_2$ are determined by the boundary conditions. 
For $\varepsilon\ll|a|$ %and $u(\xl)\neq u(\xr)$
the exact solution %has a very steep gradient and the solution 
forms
a thin boundary layer %(of width $\mathcal{O}(\varepsilon/a)$) 
at %one of the boundaries. %, see for example the section "Advection-Diffusion Equations" in \cite{Alfio}.
the %"outflow" boundary.
outflow boundary,
%This is numerically challenging  and for
%For insufficient resolution  numerical solutions  often suffers from oscillations. 
which for insufficient resolution usually leads to oscillations in the numerical solution.
  This can be handled by upwinding or %by adding
   artificial diffusion %as discussed in \cite{Alfio}. 
 (see e.g. %the discussion in Section "Advection-Diffusion Equations" in 
 \cite{Alfio}).
 Here we will instead use the free parameter $\rot$ in the penalty to minimize the oscillating modes (the so-called $\pi$-modes).

We start with the wide second derivatives stencils. The ansatz $\scalarv_i=\ansatz^i$, inserted into the interior of the scheme \eqref{primalDiscScalar}, gives (for the second order case)  a numerical solution 
\begin{align*}
\scalarv_i=\widetilde{c}_1+\widetilde{c}_2(-1)^i+\widetilde{c}_3\ansatz_3^i+\widetilde{c}_4\ansatz_4^i,&&\ 
\ansatz_{3,4}=\frac{ha}{\varepsilon}\pm\sqrt{\frac{h^2a^2}{\varepsilon^2}+1}.
\end{align*}
Thus there  exist two modes with alternating signs, $\widetilde{c}_2(-1)^i$ and $\widetilde{c}_4\ansatz_4^i$.
However, %for the second order scheme %it is possible to show 
one can show
that %this mode is cancelled if either $\taul$ or $\taur$ (not both) is chosen as $-2\varepsilon/h$. This is achieved by the choice $\rot=|a|$. 
the choice $\rot=|a|$ leads to
$\widetilde{c}_2=0$ and to $\widetilde{c}_4$ being  small enough compared to $\widetilde{c}_3$ such that $\scalarv_i$ is monotone.
Empirically we have seen that this nice behavior holds also for the wide schemes with higher order of accuracy. In Figure~\ref{AdvDiff}
the result using the %(8,3) order second derivative is shown.
scheme with interior order 8 is shown.
The solution obtained using $\rot=|a|$ shows no oscillations, even though the grid is very coarse.
Moreover, this particular choice 
 of factorization gives functional errors almost at machine precision
 (although it should be noted that this is a special case since $\force (x)=0$ and $\weight (x)=1$).

\begin{figure}[h]
%a=1;            % PARAMETERS:
%ep=.005;
%alpL=1; % Left boundary condition
%betL=0;
%alpR=1; % Right boundary
%betR=0;
%N=16;
%ord=8;
%xint=[0 1]; h=(xint(2)-xint(1))/N; % xint=[xL xR];
%funkis='advdiff';
%gfunk='1';
% [Pb,D1b,D2b,Sb,qconstb]=MakeDiffOp(N,ord,'w','wide');
%rotlist=abs(a)*[0.8 1 1.25];
\centering
\subfigure[The number of grid points is $N=16$. ]{\includegraphics[width=.5\textwidth]{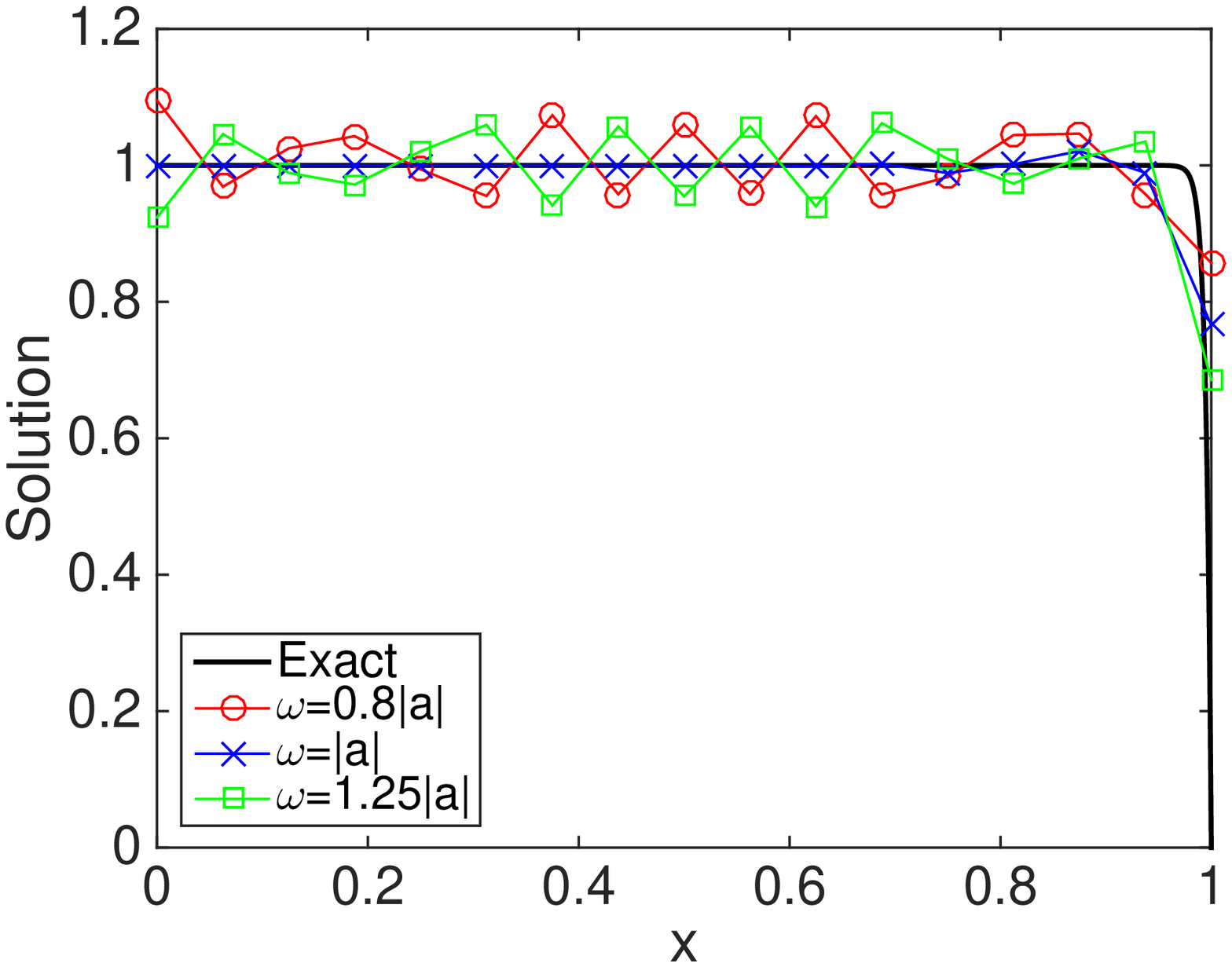}}%
%\subfigure[$a=1$, $\varepsilon=0.005$, $\rot=|a|$]{\includegraphics[width=.5\textwidth]{ConvOrd8_a1_ep0p005}}
%a=1;            % PARAMETERS:
%ep=.005;    
%    alpL=1; % Left boundary condition
%    betL=0;
%    alpR=1; % Right boundary
%    betR=0;
%    Nlist=[16 32 64 128 256 512 1024 2048 4096 8192];
%    pensort='dual';
%    ord=8;   
%    
%    %%%   
%    wc='w'; variant='wide';
%     flg='absa';
%    RitaUpp(param,Nlist,ord,wc,variant,pensort,flg,'b-+',f1,f2);
%    
%    wc='c'; variant='Ken2004'; 
%    flg='absa';
%    RitaUpp(param,Nlist,ord,wc,variant,pensort,flg,'r-o',f1,f2);
%    
%  funkis='advdiff';
%    gfunk='1';
  \subfigure[The weight function is $\weight (x)=1$.]{\includegraphics[width=.5\textwidth]{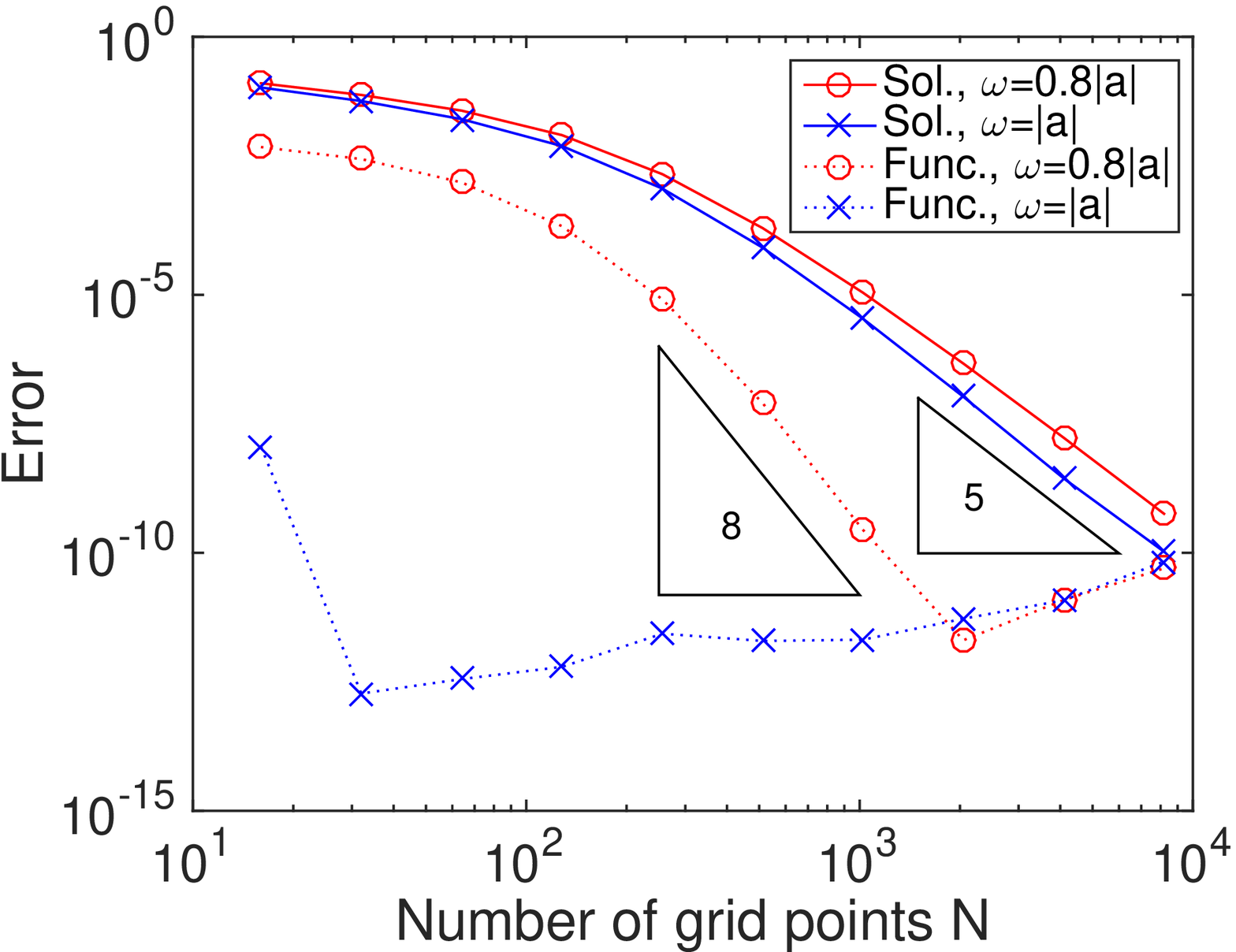}} 
\caption{
We solve $a\scalaru_x=\varepsilon \scalaru_{xx}$ with $a=1$, $\varepsilon=0.005$ using the 
wide scheme with interior order 8. % with varying factorization parameter $\rot$. 
In a) %we see that %none of the numerical solutions are resolved, but 
the solutions, % (except a small overshoot at $x\approx0.875$).
in b) the  errors $\|\fel\|_\discmark$ and  $|\Jfel|$.  %the solution errors $\|\fel\|_\discmark$ and the functional errors $|\Jfel|$. 
}\label{AdvDiff}
\end{figure}

For the narrow-stencil  schemes, the existence of 
spurious oscillating modes 
depends 
on the resolution. In 
the second order case,  the interior solution is
\begin{align*}
%v_i=\widetilde{\widetilde{c}}_1+\widetilde{\widetilde{c}}_2\left(\frac{1+\frac{ah}{2\varepsilon}}{1-\frac{ah}{2\varepsilon}}\right)^i
%v_i=\widetilde{\widetilde{c}}_1+\widetilde{\widetilde{c}}_2\left(\frac{2\varepsilon+ah}{2\varepsilon-ah}\right)^i
\scalarv_i=\widetilde{\widetilde{c}}_1+\widetilde{\widetilde{c}}_2\left(\frac{1+ah/(2\varepsilon)}{1-ah/(2\varepsilon)}\right)^i,
\end{align*}which
has an oscillating component if  
$|a|h/(2\varepsilon)>1$. %(this restriction is even more severe for higher order schemes).
With very particular choices of the penalty parameter  this component
can be canceled (for the operators with order (2,0) and (2,1) it is achieved using $\rot=|a|/(1-\frac{2\varepsilon}{|a|h})$ and $\rot=|a|(1-\frac{\varepsilon}{|a|h})/(1-\frac{2\varepsilon}{|a|h})^2$, respectively) such that the numerical solution  becomes constant. 
As soon as $|a|h/(2\varepsilon)<1$,
% we do not want
 this mode should not be canceled anymore, but how to do the transition between %the penalty in 
 the unresolved case and  the resolved case is not obvious. 
For the higher order schemes the $\rot$ which cancels the oscillating modes are even more complicated and  in some cases negative (i.e. useless). 
In short,  these particular, canceling choices of $\rot$  are not % generally not
 worth the effort. 
Instead, we recommend  to use $\rot\approx|a|+\q \varepsilon$  for the narrow-stencil operators, see below.

%\subsubsection{General forcing functions}

The above results %might have been a 
were obtained under the assumption %special case since 
$\force =0$. Next, we use a forcing function $\force $ such that the exact solution is $\scalaru(x)=\cos(30x)$. The resulting errors, together with  $\maxeig$ and $\mineig$, are shown in 
Figure~\ref{FigAD} for $a=1$ and $\varepsilon=10^{-6}$. 
Clearly, %for $\varepsilon\ll |a|h$ is 
$\rot\approx|a|$ is still a good choice 
since the errors are small, $\maxeig$ is  not too large and $\mineig$ is maximal.
%The larger $|a|h/\varepsilon$ is, the clearer this will be.
For $\varepsilon\gg |a|h$ the curves are more %and more 
similar to those in Figure~\ref{FigA}, and %$\rot\approx|a|+\const \varepsilon/h$
$\rot\approx|a|+\q \varepsilon$ will  be a better choice.
In the transition region $\varepsilon\sim |a|h$ we sometimes observe  order reduction.
This can be seen %in the  convergence plots, shown 
in Figures~\ref{FigADresolved} and \ref{FigADnotresolved} for the schemes with an interior order of accuracy 6. %, using the exact solution $\scalaru(x)=\cos(30x)$. 
Figure~\ref{FigADresolved} shows the convergence rates when $\varepsilon=0.1$, which is large enough for the numerical solution to be well resolved. For the narrow scheme, we see an improved convergence rate for the solution error if $\rot=|a|+\q \varepsilon$ is used. The functional output converges with $2p=6$ for all schemes.
Figure~\ref{FigADnotresolved} shows the convergence rates when $\varepsilon$  is decreased to $10^{-4}$, such that %we are in the pre-asymptotic region. 
the numerical solution is badly resolved. For all schemes, except the wide scheme with the particular choice  $\rot=|a|$, we see a pre-asymptotic order reduction of the functional.

\begin{figure}[H]
% Fig produced by FirstFigs with FigID='AD'
%a=1; ep=1e-6; alpL=1;  betL=0; alpR=1;  betR=0;
%N=64; ord=6; rotlist=10.^(-4:.75:8);   xint=[0 1]; 
%    funkis='cos';   gfunk='cos';    fpar=30;
\centering
\subfigure[Wide-stencil second derivative]{\includegraphics[width=.5\textwidth]{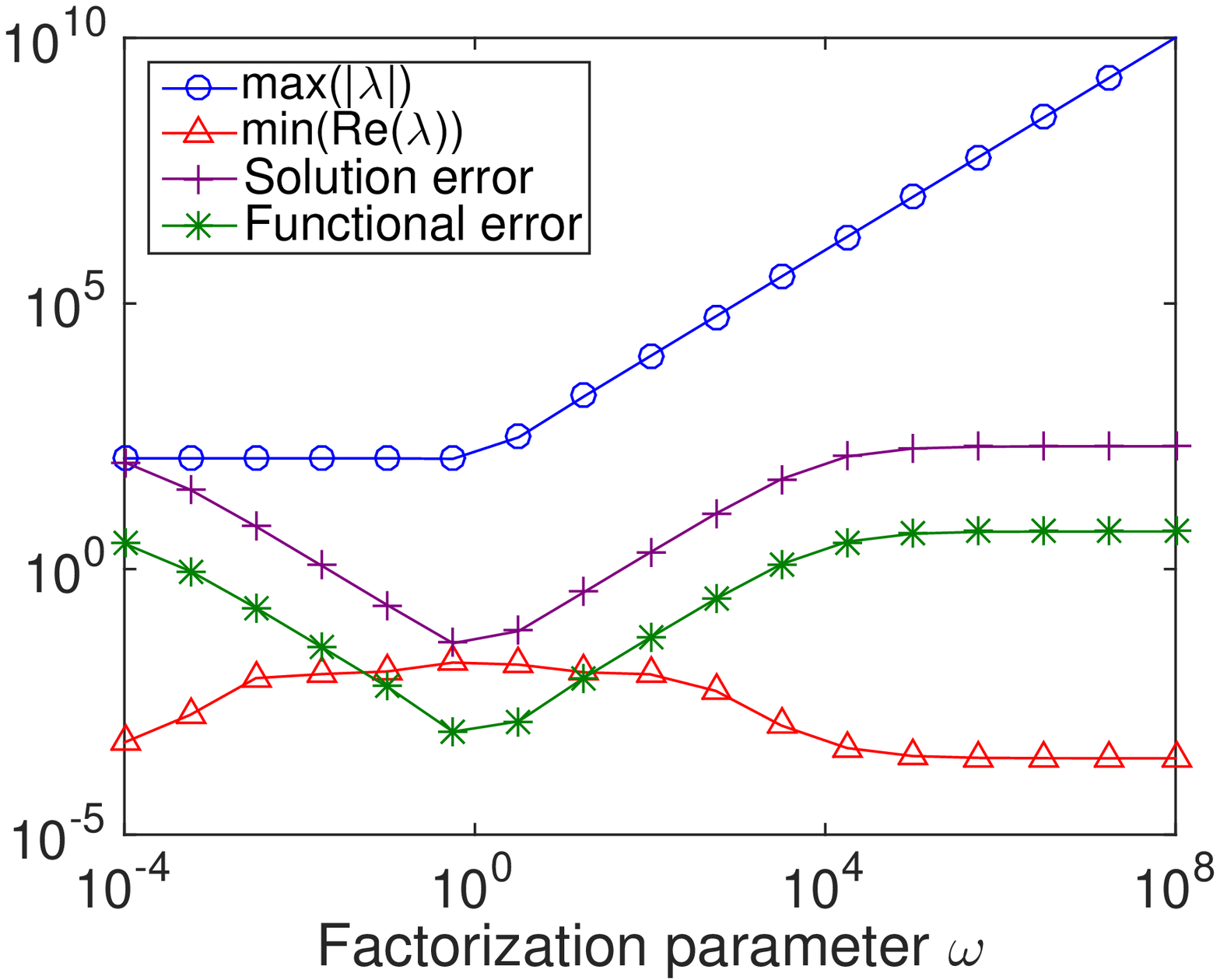}}%
\subfigure[Narrow-stencil second derivative]{\includegraphics[width=.5\textwidth]{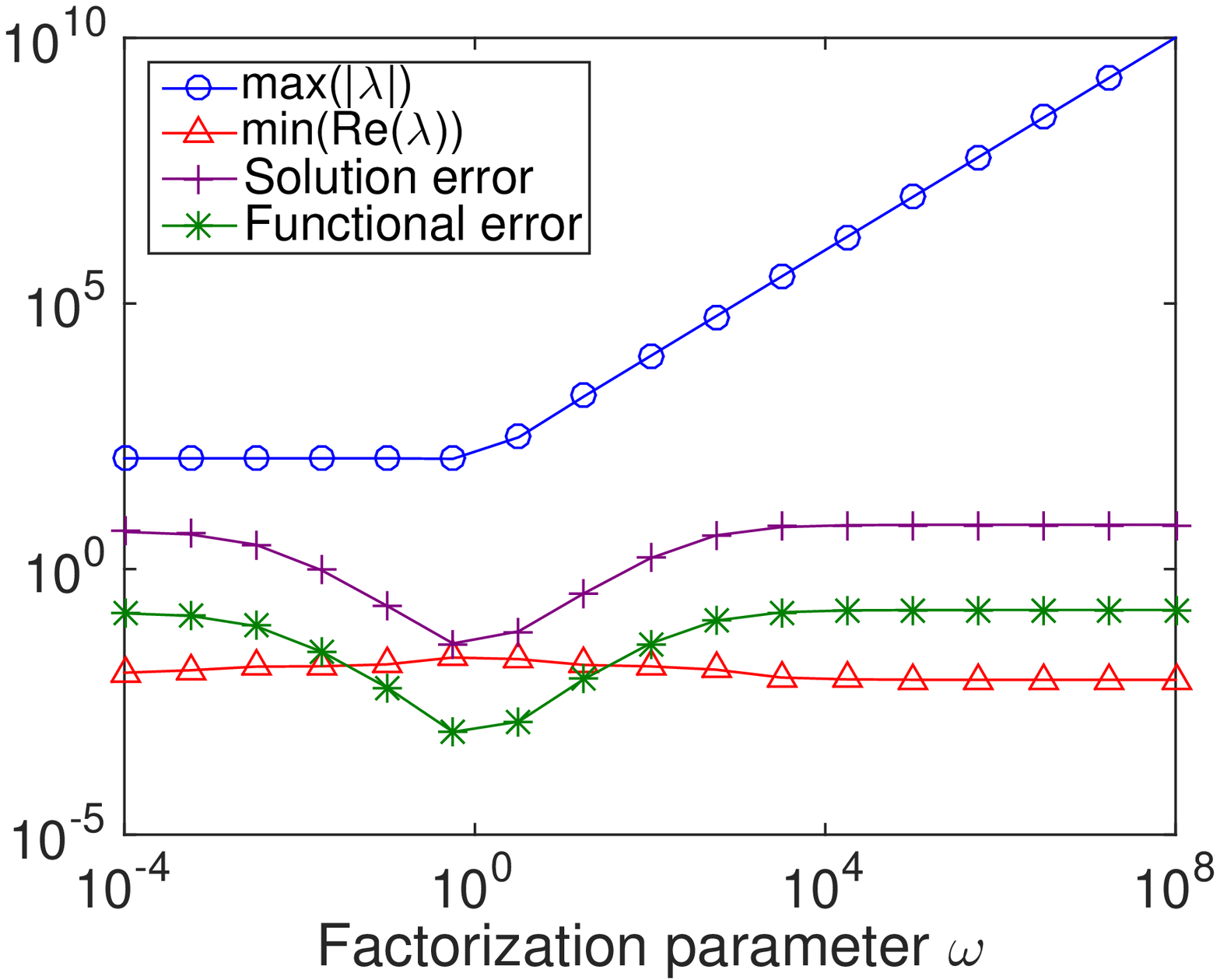}}%
\caption{We solve $a\scalaru_x=\varepsilon \scalaru_{xx}+\force (x)$ with Dirichlet boundary conditions %in $x\in[0,1]$,
%at $x=0$ and $x=1$, with the exact solution being $u(x)=\cos(30x)$ and $G(x)=\cos(30x)$.
and with $\scalaru(x)=\weight (x)=\cos(30x)$.
The number of grid points is $N=64$, %the second derivative operator has interior order 6, and is either wide (left) or narrow (right).
the interior order is 6.
}\label{FigAD}
\end{figure}

We conclude that the penalties  in  Theorem~\ref{ParaSystPen} yields
 superconvergent functionals for the advection-diffusion equation with Dirichlet boundary conditions -- in the asymptotic limit. In the special case when having the wide scheme with $\rot=|a|$ we even get super-convergent functionals in the troublesome transition region.
%{\kom
%wide converges {\it really} super for $\rot=|a|$ or for $\rot=\sqrt{a^2+4\varepsilon^2}$
%}

\begin{figure}[H]\centering
%FigID='advdiff2copy') 
%    a=1;            % PARAMETERS:
%     ep=1e-1;    
%    alpL=1; % Left boundary condition
%    betL=0;
%    alpR=1; % Right boundary
%    betR=0;
%    fpar=30;
%    funkis='cos';
%    gfunk='cos';
%    pensort='dual';
%    ord=6;   
\subfigure{\includegraphics[width=.5\textwidth]{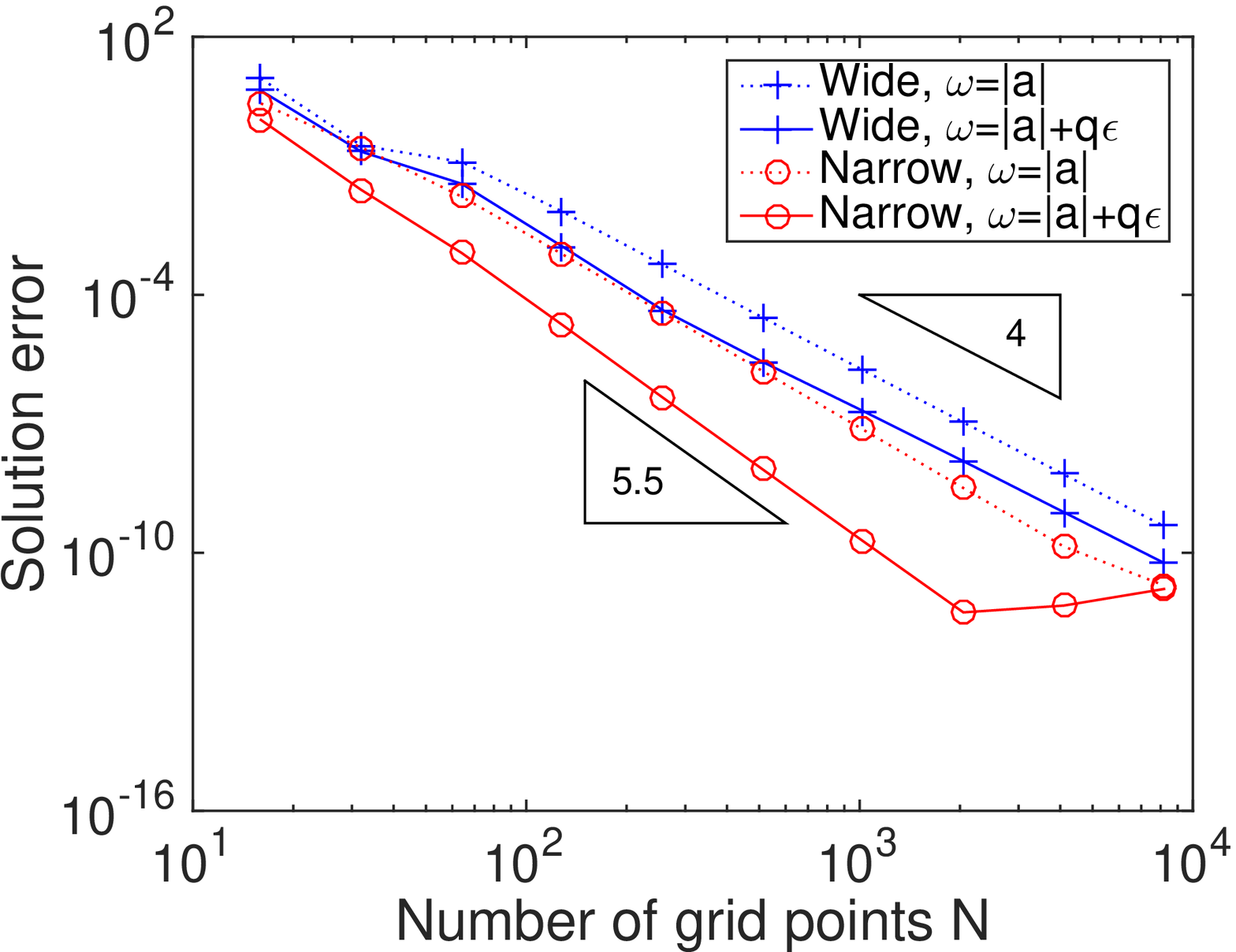}}%
\subfigure{\includegraphics[width=.5\textwidth]{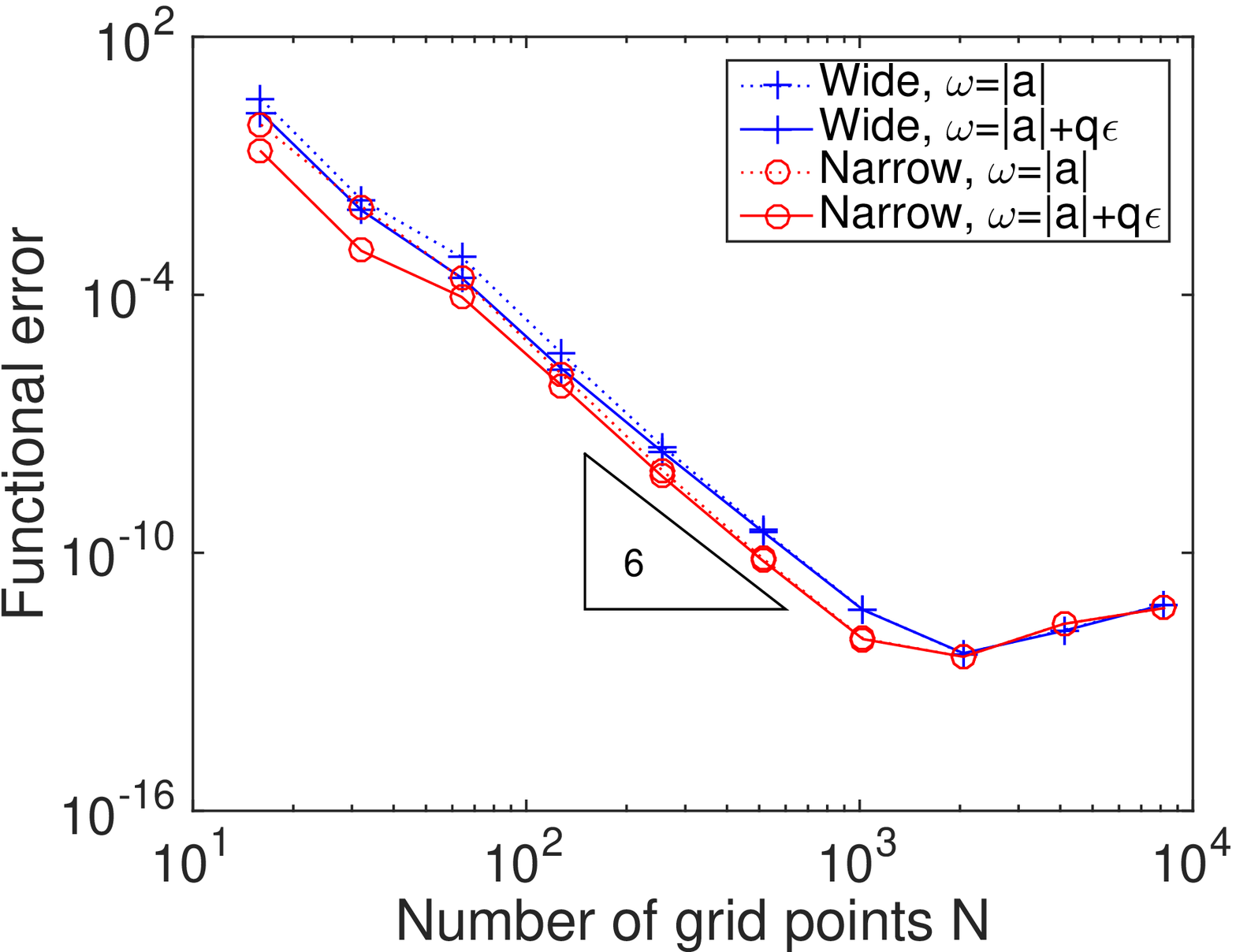}}
\caption{The  inner order of accuracy is 6, $\scalaru(x)=\weight (x)=\cos(30x)$, $a=1$ and $\varepsilon=0.1$.
}\label{FigADresolved}
\end{figure}

\begin{figure}[H]\centering
%FigID='advdiff2copy') 
%    a=1;            % PARAMETERS:
%    ep=1e-4; 
%    alpL=1; % Left boundary condition
%    betL=0;
%    alpR=1; % Right boundary
%    betR=0;
%    fpar=30;
%    funkis='cos';
%    gfunk='cos';
%    pensort='dual';
%    ord=6;   
\subfigure{\includegraphics[width=.5\textwidth]{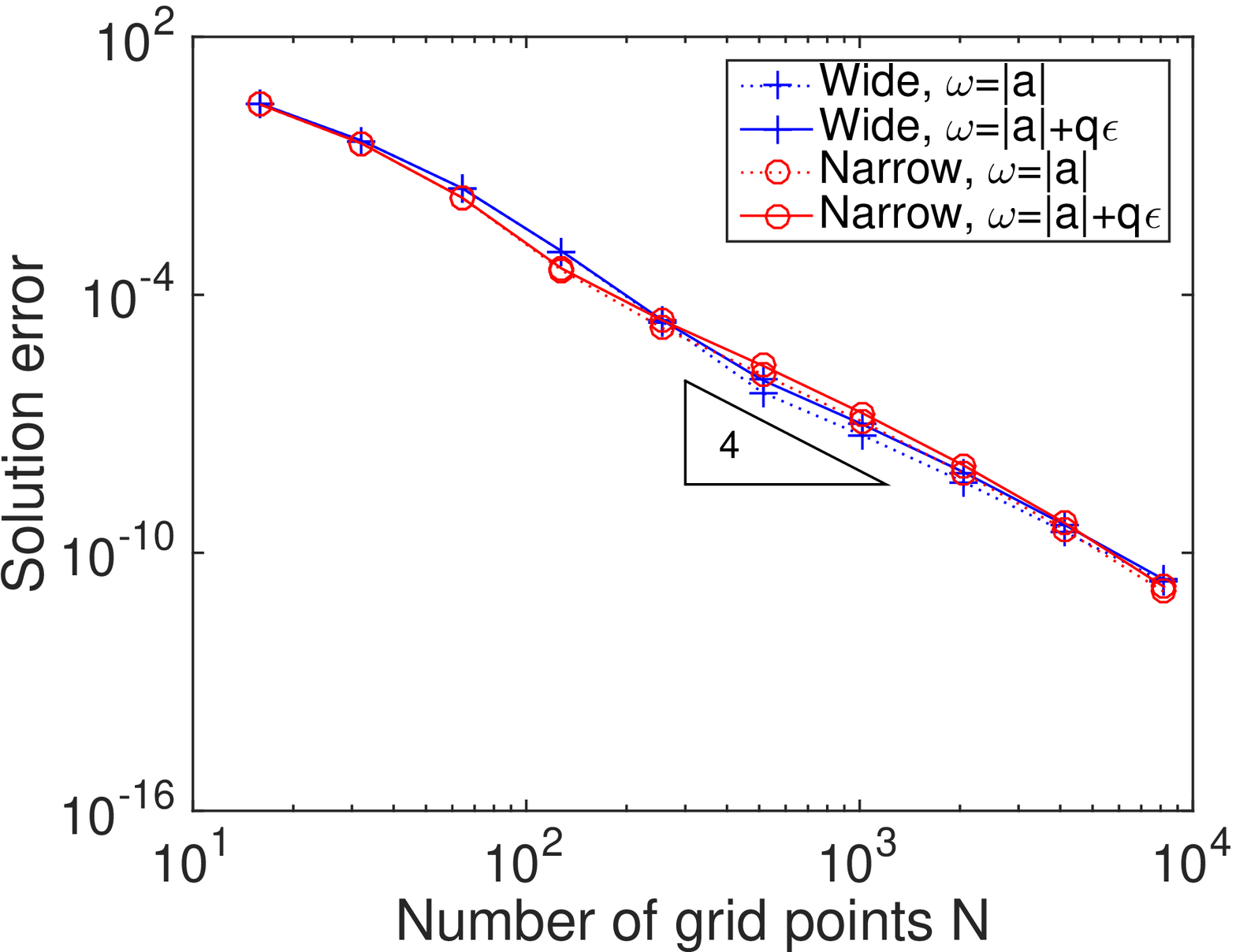}}%
\subfigure{\includegraphics[width=.5\textwidth]{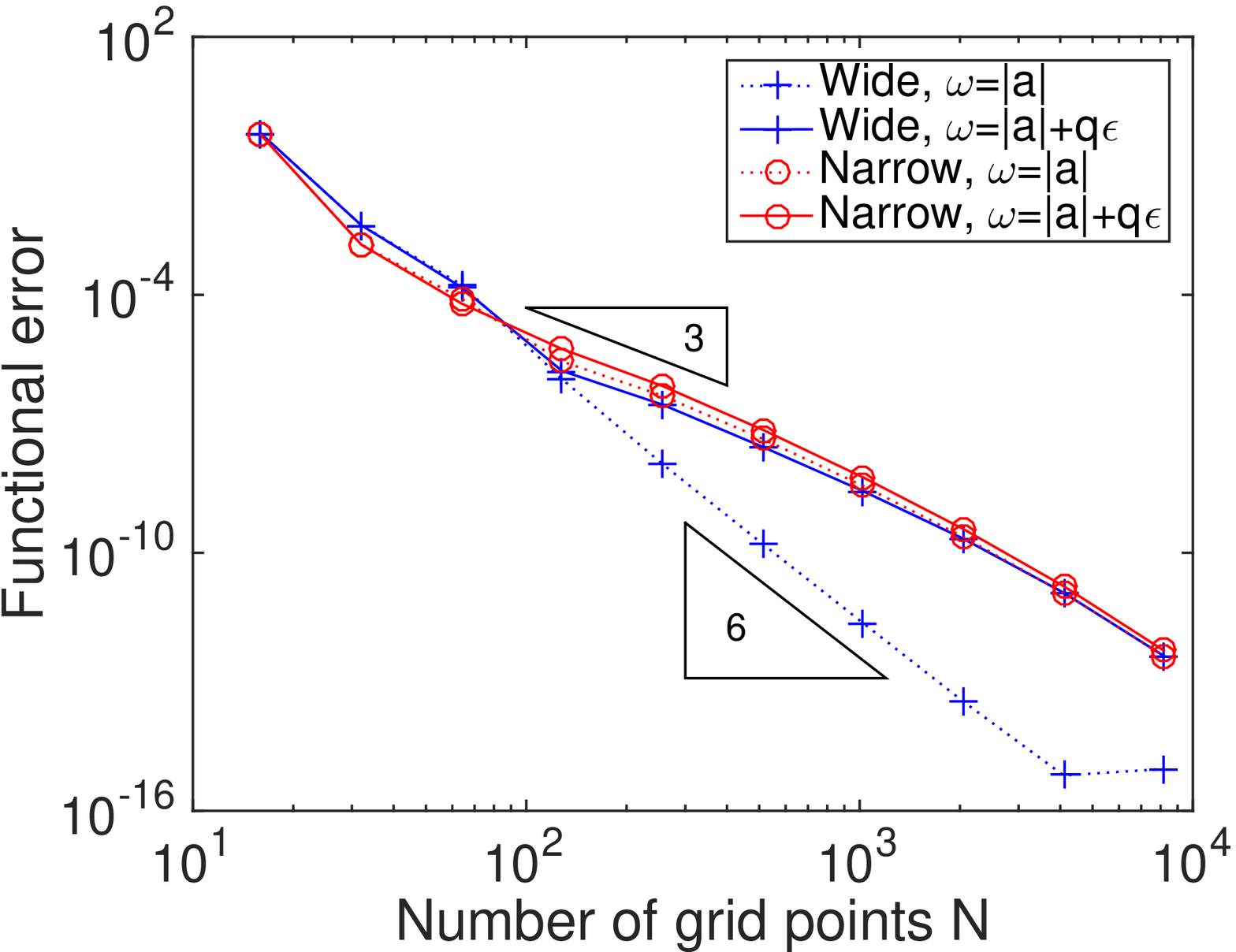}}
\caption{The  inner order of accuracy is 6, $\scalaru(x)=\weight (x)=\cos(30x)$, $a=1$ and $\varepsilon=10^{-4}$.
}\label{FigADnotresolved}
\end{figure}

\subsubsection{The advection-diffusion equation with far-field boundary conditions}

%So far we have only considered Dirichlet boundary conditions. 
%We now take a quick look at 
We just comment briefly on
the far-field boundary conditions  and their corresponding
SAT parameters given in \eqref{OptPenScalarFarField}. 
If $|a|h/\varepsilon$ is large,
the quantities $\maxeig$, $\mineig$ and the errors %are almost not effected at all by 
 barely depend on
 $\rot$ 
(except if $\force =0$, % such that the exact solution is $u(x)=c_1+c_2\exp{(ax/\varepsilon)}$, 
then the errors are smaller if $\rot\approx|a|$). %, %since the oscillations are minimized, see 
%just  as in the previous section).
%
For small $|a|h/\varepsilon$, large values of $\rot$ give smaller errors and slightly larger $\mineig$, whereas $\maxeig$ is  slightly increased.
In this case,
the
penalty obtained by taking the limit $\rot\to\infty$, that is $\taul=-1$, $\sigmal=0$, $\taur=-1$ and $\sigmar=0$ (corresponding to the penalty used in
\cite{Berg201341,Berg2014135}) 
%could also be used.
is not a bad choice  and it has an appealing simplicity.
As before, 
$\rot\approx|a|+\q \varepsilon$ also  gives good results.

\subsubsection{Reflections from the scalar case}

From what we have seen from the numerical experiments so far, the best %optimal 
choice of the factorization parameter $\rot$ 
is %apparently
%(here $\rot=k\varepsilon/h$) is
not only dependent on the continuous problem at hand (i.e. the parameters $a$ and $\varepsilon$ and the type of boundary conditions), but %highly dependent 
 also
 on numerical quantities, such as the %number of grid points $N$, 
grid resolution %, %order of accuracy 
and if
the stencils are wide or narrow.
In some cases the factorization %parameter %almost does not affect the results at all, 
has almost no impact,
sometimes it 
makes the system at hand extremely ill-conditioned or
even changes the order of accuracy of the scheme.

In the scalar
 case it is rather straightforward to optimize with respect to the single factorization parameter $\rot$, % (as soon as we have settled for a definition of "optimal"), 
but for systems this task becomes %can be
 non-trivial %. In those cases one has
 and one might  have 
  to settle for the factorizations at hand.
Nevertheless, we note that the eigendecomposition %(in the scalar case $\rot=\sqrt{a^2+4\varepsilon^2}$) 
is not necessarily the best factorization and that it could be worth searching for other options. 
With that being said, next we consider a system and use nothing but the eigendecomposition for constructing the penalty parameters.

%\subsection{A system with solid wall boundary conditions}
\subsection{A fluid dynamics system with solid wall boundary conditions}
\label{SysSec}

The symmetrized, compressible
Navier--Stokes equations in one dimension ($\Omega=[0,1]$) with frozen coefficients %can be described 
is given
by \eqref{ParaSyst},  with
\begin{align*}
\paraA=\left[\begin{array}{ccc}\bar{u}&\abar&0\\\abar&\bar{u}&\bbar\\0&\bbar&\bar{u}\end{array}\right],&&
\paraE=\varepsilon\left[\begin{array}{ccc}0&0&0\\0&\varphi&0\\0&0&\psi\end{array}\right],&&
%U=\left[\begin{array}{c}\bar{c}\varrho'/(\sqrt{\gamma}\bar{\varrho})\\u'\\T'/(\bar{c}M_\infty^2\sqrt{\gamma(\gamma-1)})\end{array}\right].
\sol =\left[\begin{array}{c}\varrho\\u\\T\end{array}\right],
\end{align*}
where the constants $\bar{u}$, $\abar$, $\bbar$, $\varepsilon$, $\varphi$ and $\psi$ denote suitable physical quantities  and
where $\varrho$, $u$ and $T$ are scaled %versions of 
perturbations in density, velocity and temperature.
Let $\bar{u}<0$ and $\varepsilon, \varphi, \psi>0$. In this case, two boundary conditions should be given at the left boundary and three at the right boundary.
We impose solid wall
 boundary conditions (a perfectly insulated wall) at the left boundary, that is %$u'(\xl,t)=T'_x(\xl,t)=0$.
$u(0,t)=
T_x(0,t)=0$.
 At the right boundary, we impose free stream boundary conditions of Dirichlet type, as $\sol (1,t)=\sol _\infty$.
 These boundary conditions give a well-posed problem.
The boundary operators are
\begin{align*}
%\hypBL&=\left[\begin{array}{cccccc}0&1&0&0&0&0\\0&0&0&0&0&1\end{array}\right]
\HL&=\left[\begin{array}{ccc}0&1&0\\0&0&0\end{array}\right],&\GL&=\left[\begin{array}{ccc}0&0&0\\0&0&1\end{array}\right],&
%\factorL&=\left[\begin{array}{ccc}0&0&0\\0&0&1/(\varepsilon\psi)\end{array}\right]\\
%\hypBR&=\left[\begin{array}{cccccc}1&0&0&0&0&0\\0&1&0&0&0&0\\0&0&1&0&0&0\end{array}\right]
\HR&=\left[\begin{array}{ccc}1&0&0\\0&1&0\\0&0&1\end{array}\right],&\GR&=\left[\begin{array}{ccc}0&0&0\\0&0&0\\0&0&0\end{array}\right].
%&\factorR&=\left[\begin{array}{ccc}0&0&0\\0&0&0\\0&0&0\end{array}\right]
\end{align*}
These boundary conditions %are not
 can not be rearranged to
  the  far-field form %(that is with   $\GL=-\paraE$ and $\GR=\paraE$) 
  and therefore the penalty used in \cite{Berg201341,Berg2014135} can not be applied.
We identify $\bigA$,
$\bigBL$ and $\bigBR$ %$\bigBL=\left[\HL,\ \GL\right]$ and $\bigBR=\left[\HR,\ \GR\right]$ 
according to \eqref{LiteViktigare}, 
and factorize  $\bigA$
%has
%%(for $\bar{u}<0$ and $\varepsilon>0$)
%2 positive and 3 negative eigenvalues 
using the eigendecomposition. %If $\varepsilon=0$ it has 1 positive and  2 negative eigenvalues. or 0 pos and 3 negative if $\bar{v}<-\bar{c}$
The dual consistent
penalty parameters
are now described in \eqref{TauUtanMu},
with
\begin{align*}
\factorL&=\left[\begin{array}{ccc}0&0&0\\0&0&1/(\varepsilon\psi)\end{array}\right],
&\factorR&=\left[\begin{array}{ccc}0&0&0\\0&0&0\\0&0&0\end{array}\right].
\end{align*}
As a comparison, we  use the alternative penalty parameters (cf. Method 2 in Remark~\ref{RemNonDual})
\begin{align*}
\nondual{\penI}_0=\left[\begin{array}{cc}-\abar& 0\\0 &0\\ -\bbar &\varepsilon\psi\end{array}\right],&&
\nondual{\penS}_0=\left[\begin{array}{cc}0 &0\\\varepsilon\varphi& 0\\0& 0\end{array}\right],&&
\nondual{\penI}_N=\left[\begin{array}{ccc}\bar{u} &\abar &0\\0& \bar{u}& 0\\0 &\bbar& \bar{u}\end{array}\right],&&
\nondual{\penS}_N=\left[\begin{array}{ccc}0& 0& 0\\0& -\varepsilon\varphi& 0\\0& 0& -\varepsilon\psi\end{array}\right]
\end{align*}
which give stability (they are chosen such that the boundary terms %$\text{BT}_L^{Disc.}$ and $\text{BT}_R^{Disc.}$ 
in \eqref{FirstBTCompact} are non-positive for zero data) but %they
they
 do not fulfill the demands for dual consistency.

In the numerical  simulations we 
use %a forcing function $\force $ such that 
the exact solution 
$\varrho=\cos(7x)$, $u=\sin(13x)$ and $T=\cos(30x)$ and as weight functions %for the functionals 
we  use $\weight (x)=[1,\ 0,\ 0]^T$,  $\weight (x)=[0,\ 1,\ 0]^T$ and  $\weight (x)=[0,\ 0,\ 1]^T$ (such that one functional output is obtained for each variable). 
Figure~\ref{FigSYSTEMsol} shows
the resulting errors %for the $\varepsilon=0.01$ case 
when using the schemes with interior order 6.
In the wide case,  the solutions do not differ much.
In the narrow case, the dual consistent solution  converges one half order slower than the dual inconsistent one (order 4 for $\varrho$ and 4.5 for $u,T$ compared to 4.5 for $\varrho$ and 5 for $u,T$), but 
the result is still as good as in the wide case. Moreover,
recall that in the scalar case the order %of the narrow scheme 
could be improved by choosing another factorization than the eigendecomposition, see %Figure~\ref{FigB}(a), 
Figure~\ref{FigADresolved}(a). %, which might be possible also for systems.)
In
Figure~\ref{FigSYSTEMfunc} we see that the functionals convergence with the expected 6th order for both the dual consistent schemes, whereas the dual inconsistent schemes yield 5th order.

  \begin{figure}[H]
%  Figure done with SolidWall.m
%xint=[0 1];
%L=(xint(2)-xint(1));
%% PARAMETERS:
%c=1;
%v=-.5;  
%ep=1e-2; 
%phi=1;
%psi=2;
%gamma=25/16;
%a=c/sqrt(gamma);
%b=c*sqrt((gamma-1)/gamma);
  \centering
  \subfigure[Interior order 6, wide operator]{\includegraphics[width=.5\textwidth]{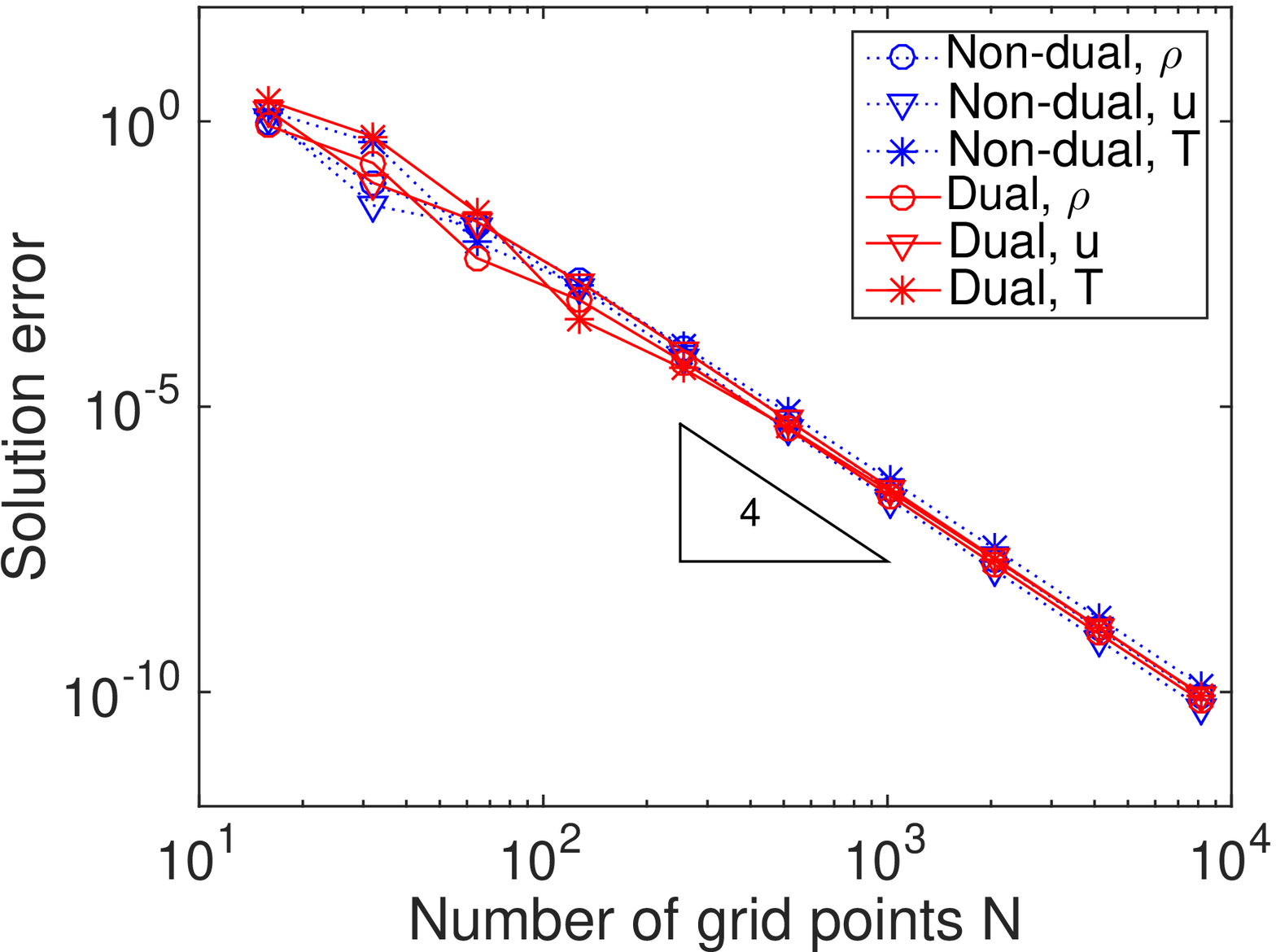}}%
\subfigure[Interior order 6, narrow operator]{\includegraphics[width=.5\textwidth]{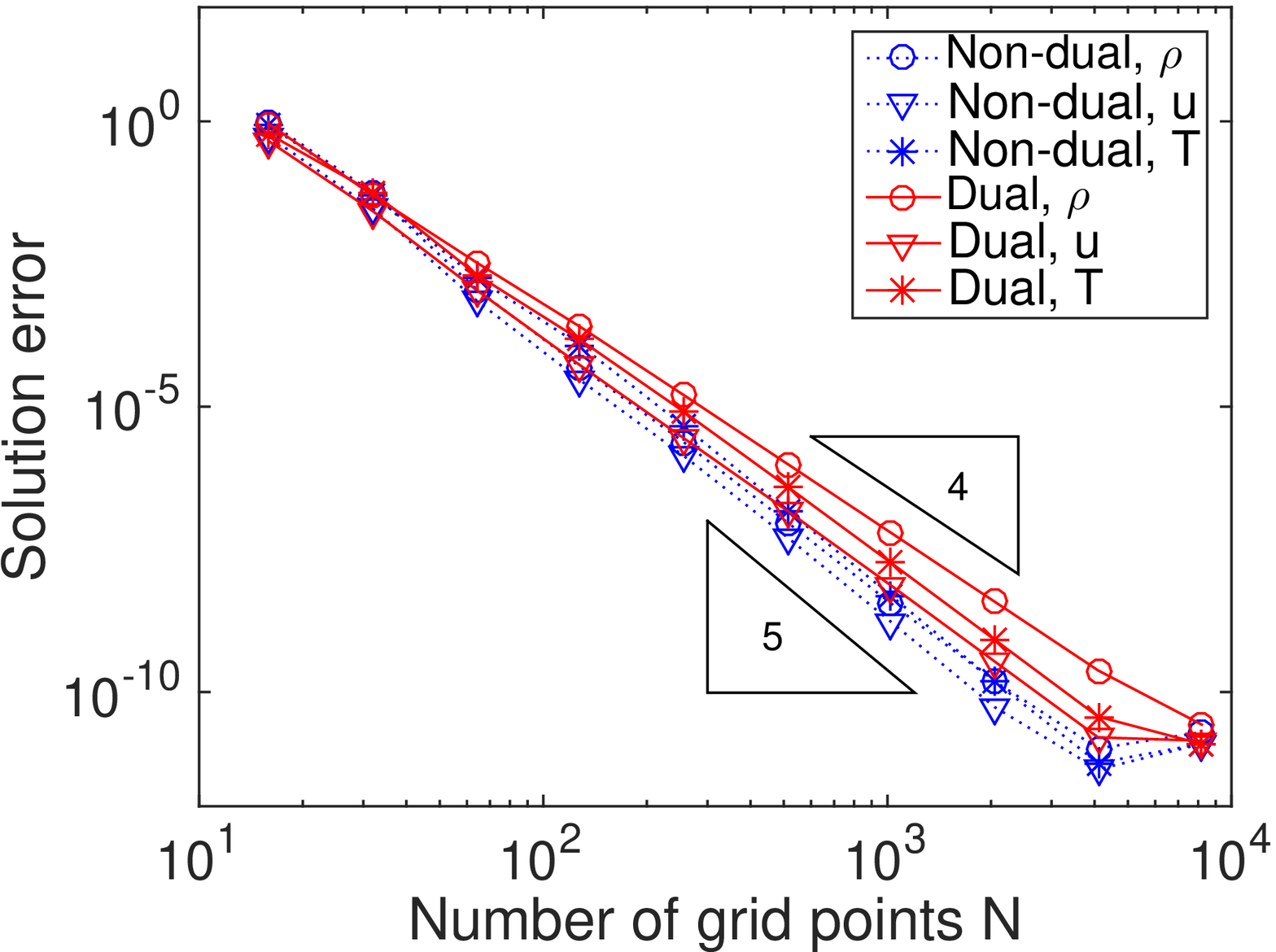}}%
\caption{Solution errors, % in the solution for the 1D Navier--Stokes system, 
%with parameters
for $\bar{u}=-0.5$, $\abar=0.8$, $b=0.6$, $\varphi=1$, $\psi=2$, $\varepsilon=0.01$.
% The operators have interior order 6.
%The second derivative approximation is either compact (left) or wide (right).
}\label{FigSYSTEMsol}
\end{figure}
  \begin{figure}[H]
%  Figure done with SolidWall.m
%xint=[0 1];
%L=(xint(2)-xint(1));
%% PARAMETERS:
%c=1;
%v=-.5;  
%ep=1e-2; 
%phi=1;
%psi=2;
%gamma=25/16;
%a=c/sqrt(gamma);
%b=c*sqrt((gamma-1)/gamma);
  \centering
%\subfigure[6th order accurate, compact operator]{\includegraphics[width=.5\textwidth]{SystemOrd6Comp_ep0p01}}%
%\subfigure[6th order accurate, wide operator]{\includegraphics[width=.5\textwidth]{SystemOrd6Wide_ep0p01}}
\subfigure[Interior order 6, wide operator]{\includegraphics[width=.5\textwidth]{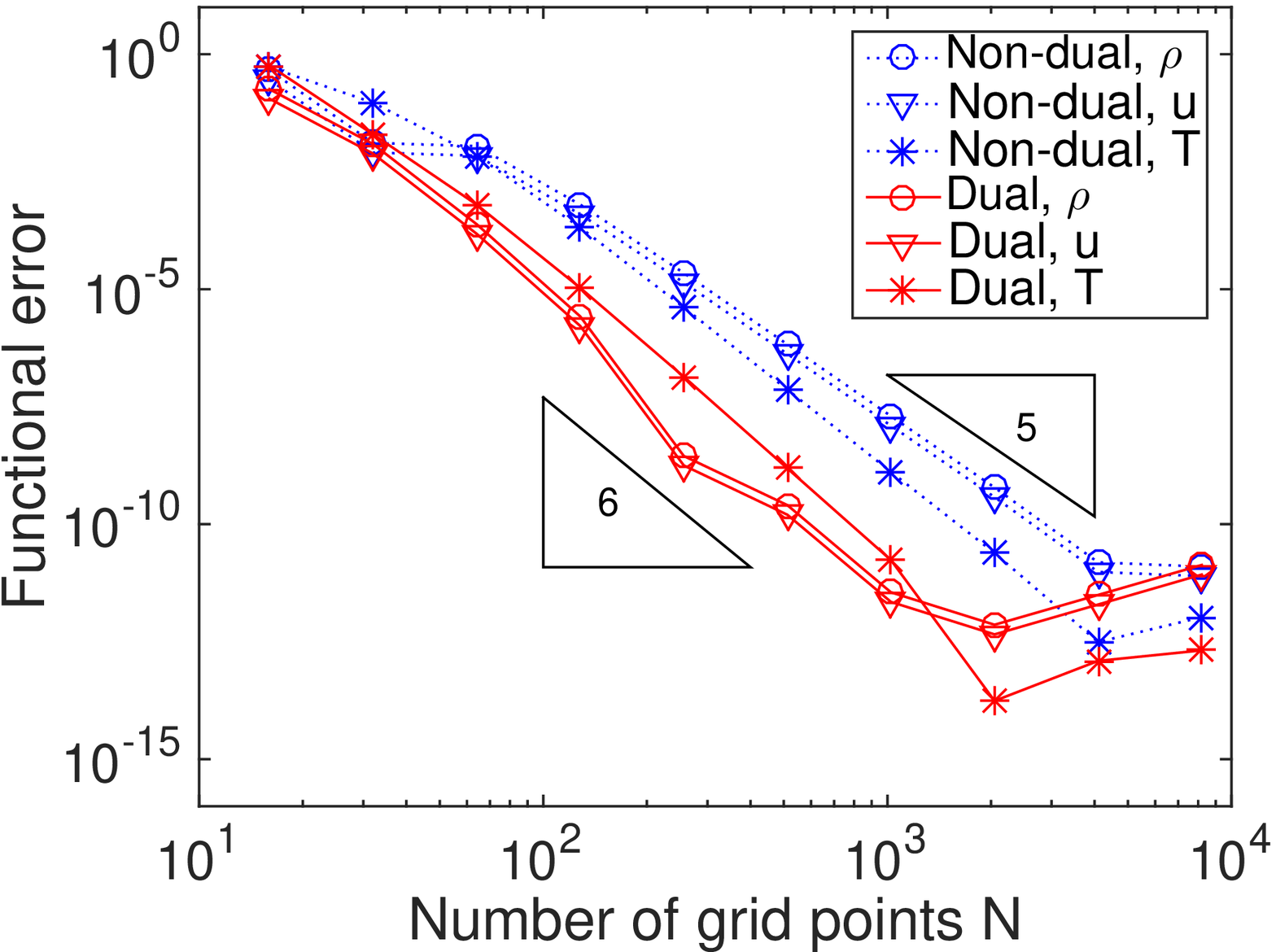}}%
\subfigure[Interior order 6, narrow operator]{\includegraphics[width=.5\textwidth]{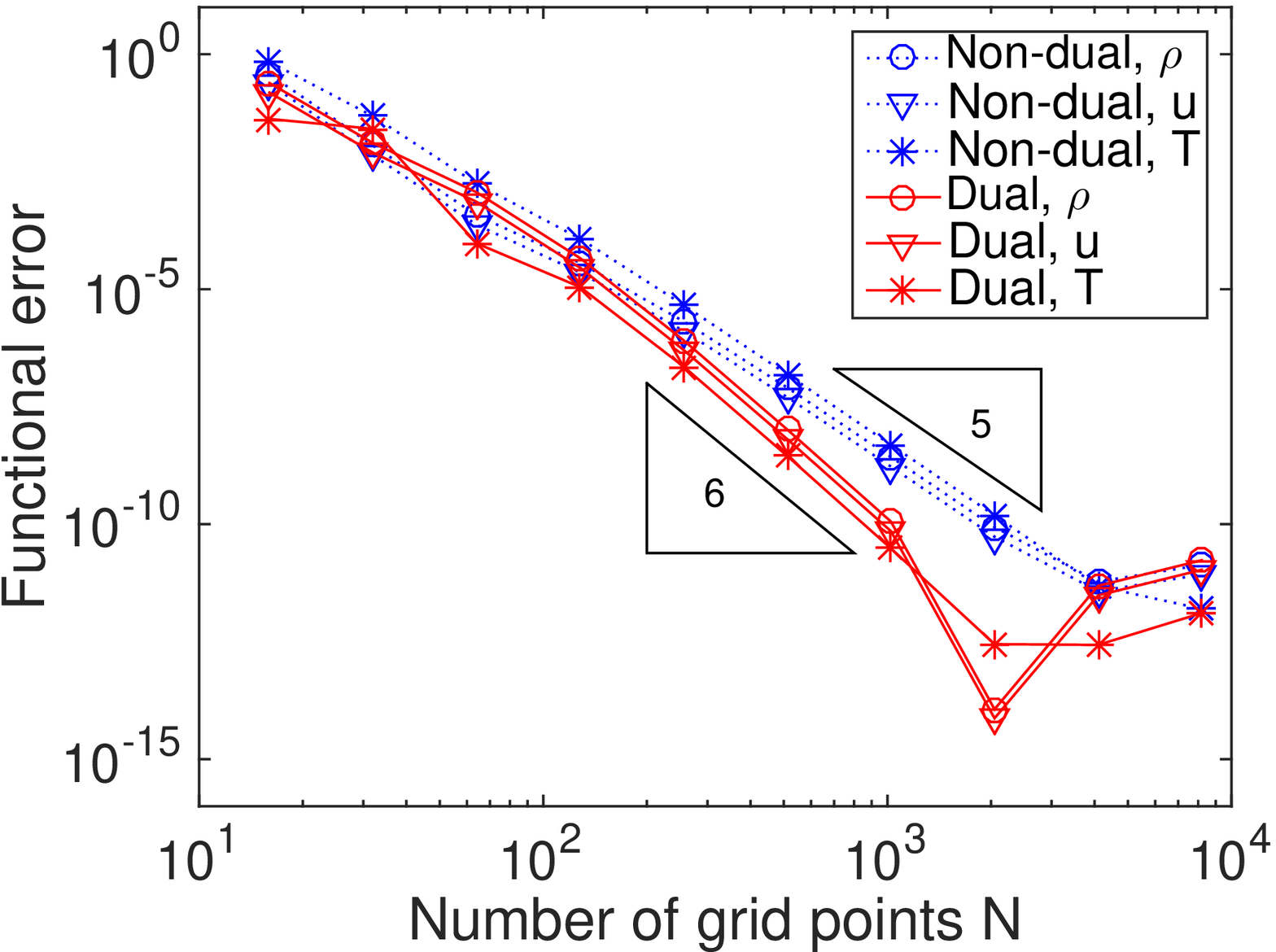}}%
\caption{Functional errors, % in the functional output for the 1D Navier--Stokes system, with parameters
for
 $\bar{u}=-0.5$, $\abar=0.8$, $b=0.6$, $\varphi=1$, $\psi=2$, $\varepsilon=0.01$.
% The operators have interior order 6. The second derivative approximation is either compact (left) or wide (right).
}\label{FigSYSTEMfunc}
\end{figure}

The diffusion parameter is decreased from  $\varepsilon=0.01$ to $\varepsilon=10^{-6}$ and the resulting errors are shown in Figures~\ref{FigSYSTEMhighReSol} and \ref{FigSYSTEMhighReFunc}. % for the operators with 6th order inner accuracy.
Now the solution 
errors %there is not much difference between 
obtained using the
dual consistent schemes are  slightly better
than the ones obtained using the dual inconsistent
schemes, but the difference is small, 
see Figure~\ref{FigSYSTEMhighReSol}.
  \begin{figure}[H]
%  Figure done with SolidWall.m
%xint=[0 1];
%L=(xint(2)-xint(1));
%% PARAMETERS:
%c=1;
%v=-.5;  
%ep=1e-6; 
%phi=1;
%psi=2;
%gamma=25/16;
%a=c/sqrt(gamma);
%b=c*sqrt((gamma-1)/gamma);
  \centering
  \subfigure[Interior order 6, wide operator]{\includegraphics[width=.5\textwidth]{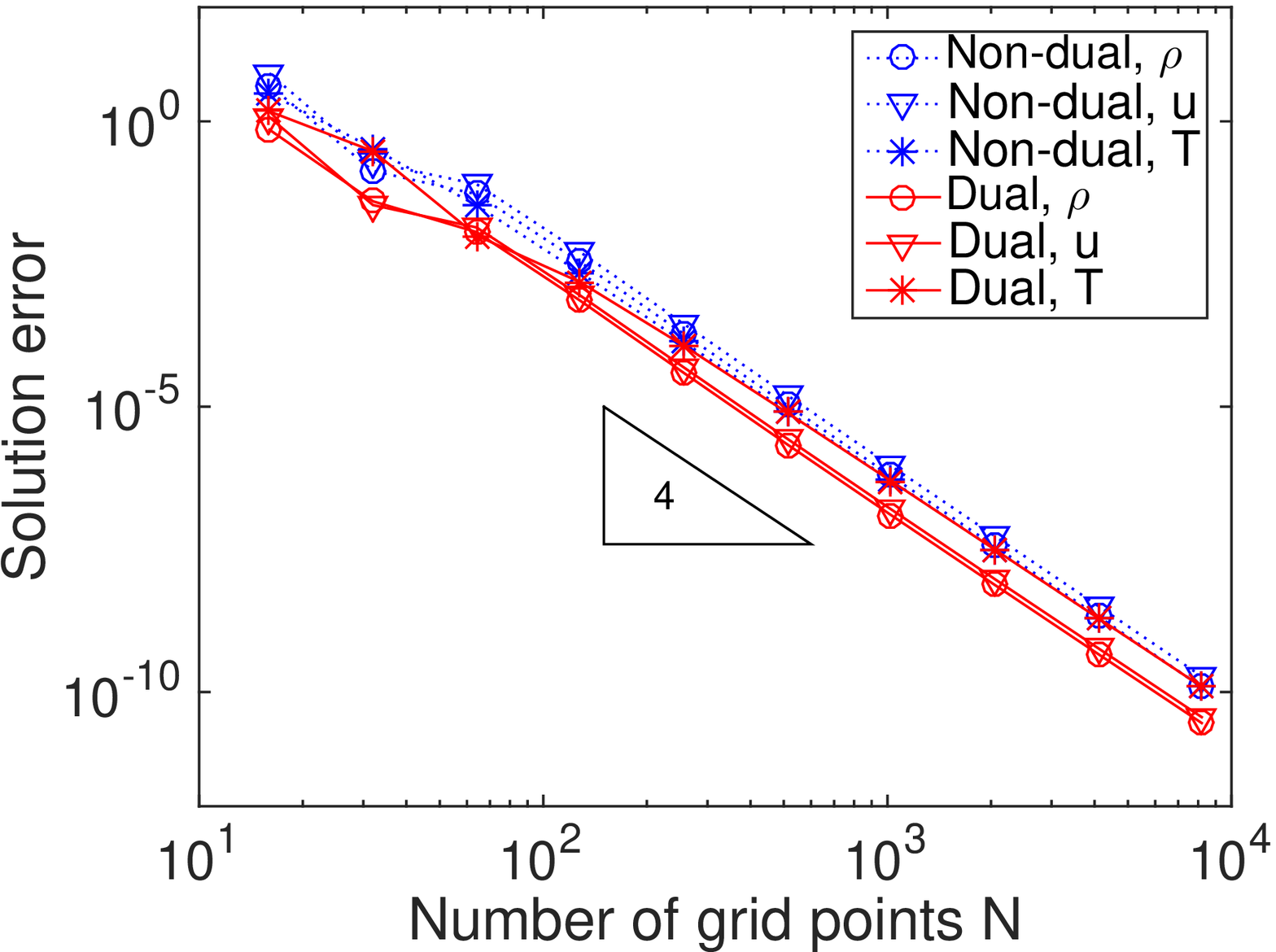}}%
\subfigure[Interior order 6, narrow operator]{\includegraphics[width=.5\textwidth]{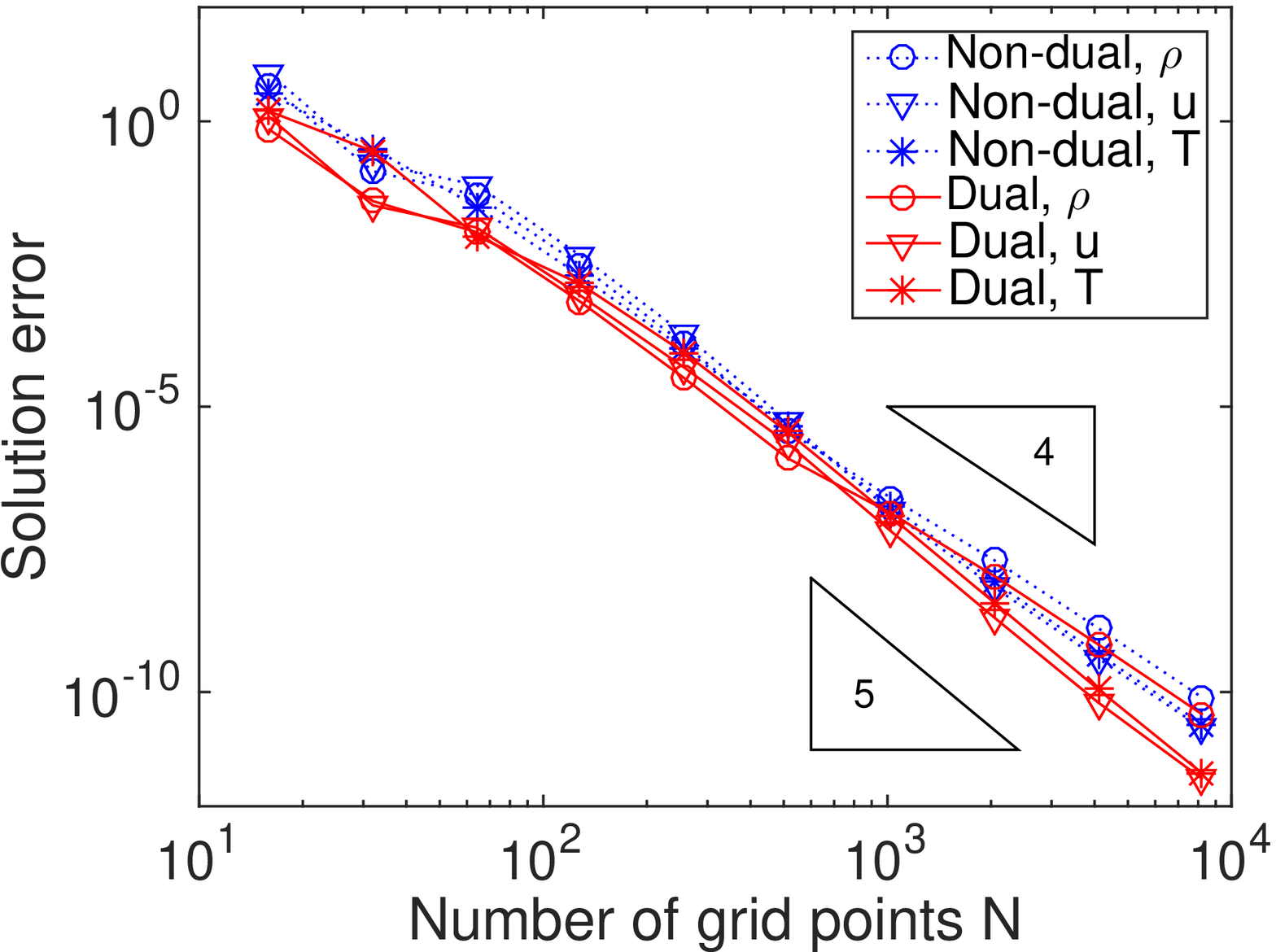}}%
\caption{
Solution errors, % in solution for the 1D Navier--Stokes system, with parameters
for $\bar{u}=-0.5$, $\abar=0.8$, $b=0.6$, $\varphi=1$, $\psi=2$, $\varepsilon=10^{-6}$.
% The operators have interior order 6.
%The second derivative approximation is either compact (left) or wide (right).
}\label{FigSYSTEMhighReSol}
\end{figure}
For the functional errors the difference is more pronounced, see Figure~\ref{FigSYSTEMhighReFunc}. 
In the wide case, the dual consistent  scheme produces a perfect convergence rate of almost 7.
This behavior was observed already in the scalar case,  when the factorization parameter was chosen exactly as $\rot=|a|$ (which %however 
for small amounts of diffusion is very  close to the eigendecomposition).
For the narrow-stencil schemes the dual consistent scheme still produces smaller errors than the dual inconsistent scheme, but the order is reduced to 3 (a pre-asymptotic low-order tendency seen already in Figure~\ref{FigADnotresolved} in the scalar case).

  \begin{figure}[H]
%  Figure done with SolidWall.m
%xint=[0 1];
%L=(xint(2)-xint(1));
%% PARAMETERS:
%c=1;
%v=-.5;  
%ep=1e-6; 
%phi=1;
%psi=2;
%gamma=25/16;
%a=c/sqrt(gamma);
%b=c*sqrt((gamma-1)/gamma);
  \centering
%\subfigure[6th order accurate, compact operator]{\includegraphics[width=.5\textwidth]{SystemOrd6Comp_ep0p000001}}%
%\subfigure[6th order accurate, wide operator]{\includegraphics[width=.5\textwidth]{SystemOrd6Wide_ep0p000001}}
\subfigure[Interior order 6, wide operator]{\includegraphics[width=.5\textwidth]{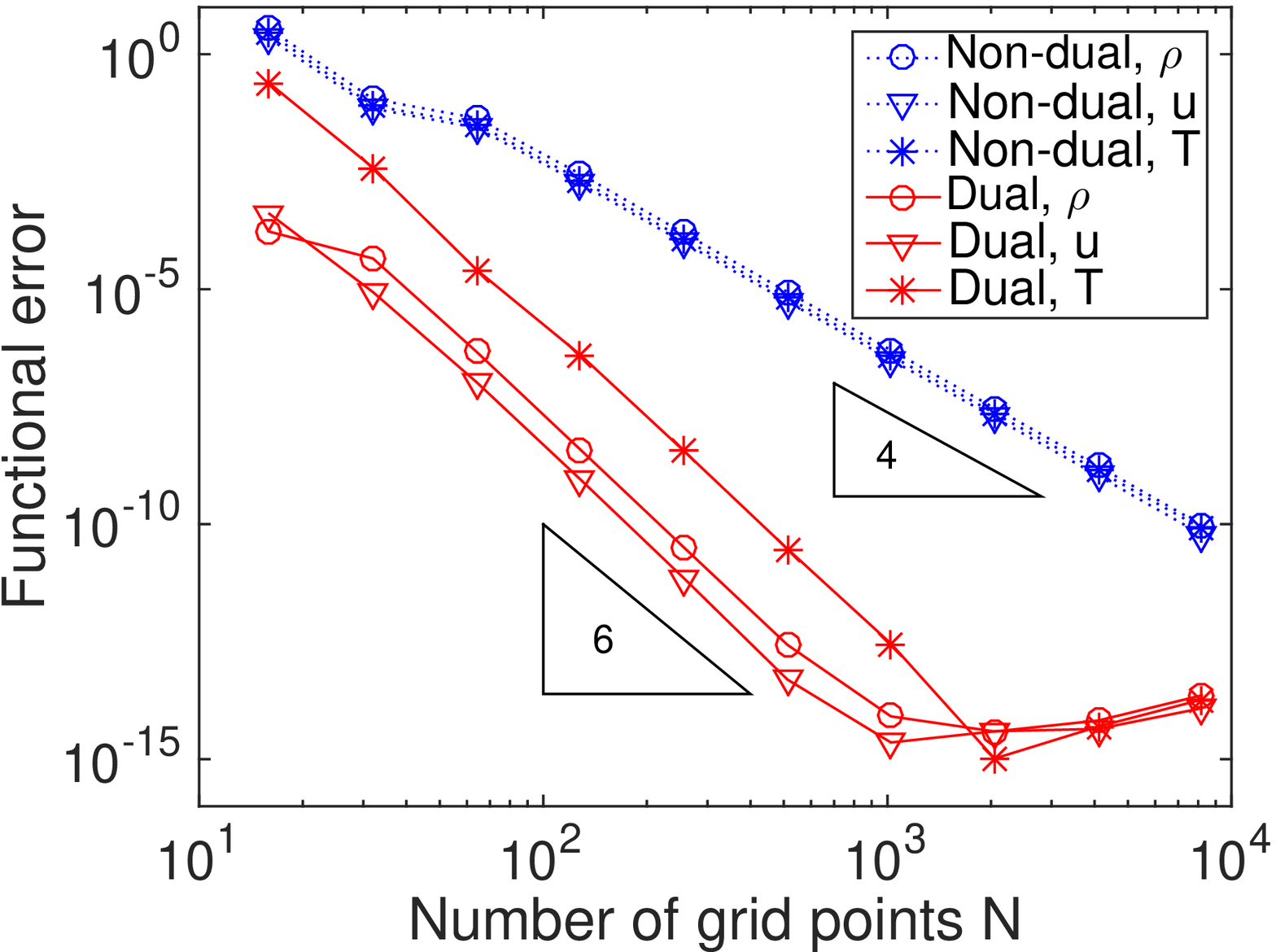}}%
\subfigure[Interior order 6, narrow operator]{\includegraphics[width=.5\textwidth]{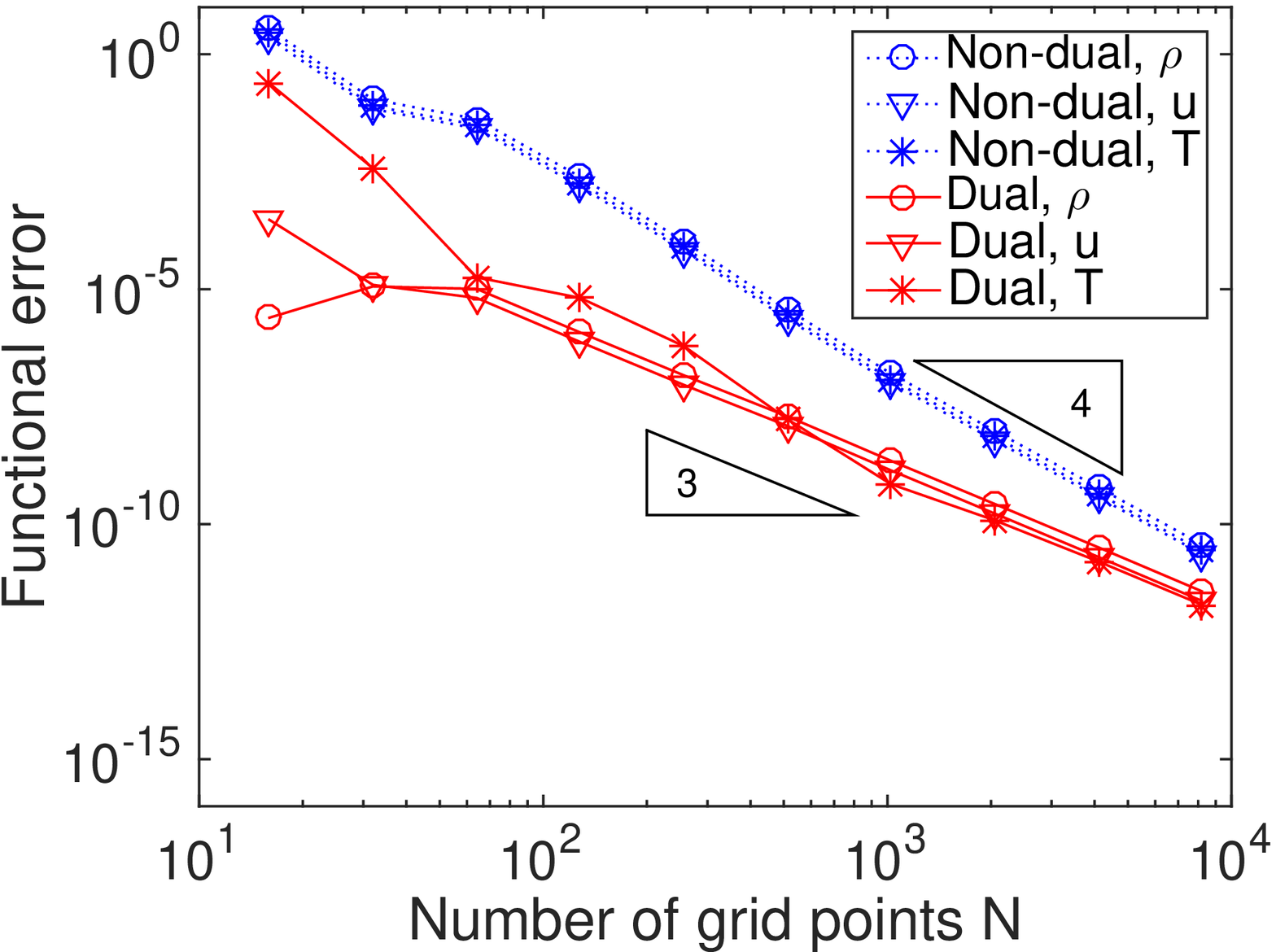}}%
\caption{
Functional errors, % in the functional output for the 1D Navier--Stokes system, with parameters
for 
 $\bar{u}=-0.5$, $\abar=0.8$, $b=0.6$, $\varphi=1$, $\psi=2$, $\varepsilon=10^{-6}$.
% The operators have interior order 6.
%The second derivative approximation is either compact (left) or wide (right).
}\label{FigSYSTEMhighReFunc}
\end{figure}
Extrapolating from the scalar case, we  assume that it could be worth searching for better penalty parameters for the narrow-stencil schemes when having diffusion dominated problems. However, for convection dominated problems the wide scheme with a factorization close to the eigendecomposition  is %probably 
 hard to beat.

\section{Concluding remarks}
\label{conclusions}

We use a finite difference method based on summation by parts operators, combined with a penalty method for the %implementation of 
boundary conditions (SBP-SAT).
Diagonal-norm SBP operators have $2p$-order accurate interior stencils 
 and $p$-order accurate boundary closures, which limits the global  accuracy of the solution to $p+1$ (or $p + 2$ for parabolic problems under
certain conditions).
Recently, it has been shown that SBP-SAT
schemes can give functional
estimates that are $\mathcal{O}(h^{2p})$. To achieve this superconvergence, the SAT parameters
must be carefully chosen to ensure that the discretization is dual-consistent.

We first look at hyperbolic systems %with general boundary conditions 
and derive
stability requirements and  duality constraints for the SATs. Then
we present a recipe to choose these SAT parameters such that both these (independent) demands are fulfilled.
When wide-stencil second derivative operators are used,
the results automatically extend to parabolic problems. We   generalize the recipe  such that it holds also for narrow-stencil second derivative operators. %, something which has not been considered before. 

The $2p$ order convergence of SBP-SAT functional estimates is confirmed numerically 
for a
variety of scalar examples, as well as for an incompletely parabolic system.
For low-diffusion  advection-diffusion problems,
the superconvergence is sometimes seen first asymptotically. Generally speaking, the narrow-stencil schemes are better for diffusion dominated problems whereas the wide schemes are preferable for advection dominated problems.

In most cases the derived dual consistent SAT parameters have some remaining  degree of freedom. 
 The free parameters %in the recipe 
 can be used to improve the accuracy of the primary solution or to tune numerical quantities such as spectral radius, decay rate or condition numbers.
 Optimal choices within these families are suggested for the scalar problems, however, to do the same for systems is considered a task for the future.

\appendix

\section{Reformulation of the first order form discretization}
\label{ReformWide}

We derive
the scheme \eqref{FinalWide} with penalty parameters
\eqref{WIDEtauRel},
%is equivalent to the scheme obtained by discretizing \eqref{ParaSystAsHypSyst}
%choosing the penalty parameters as specified in  Theorem~\ref{HypSystPen}.
% and then 
using the hyperbolic results.

\

%\noindent{\bf Approach 1:} 
\noindent{\bf Step 1:} Consider the %reformulated
problem \eqref{ParaSystAsHypSyst}, which is
 a first order system. We represent the solution $\bigU$ by a discrete solution vector  $\bigV=[\bigV_0^T,\bigV_1^T,\hdots,\bigV_N^T]^T$, where $\bigV_i(t)\approx \bigU(x_i,t)$ and discretize \eqref{ParaSystAsHypSyst} exactly as was done in \eqref{HypSystDisc} for the hyperbolic case, that is as
\begin{align}
\label{Approach1}
\begin{split}(I_N\otimes \bigI)\bigV_t+(I_N\otimes \bigR)\bigV+(D_1\otimes \bigA)\bigV=\bigFdisc&+(\PH ^{-1}e_0 \otimes\bigSigL )(\bigBL{\bigV}_0 -\gl)\\
&+(\PH ^{-1}e_N \otimes\bigSigR )(\bigBR{\bigV}_N -\gr).%,\\\bigV(0)&=\BIGf_h.
\end{split}
\end{align}
As 
proposed in Theorem~\ref{HypSystPen}, we let
%\begin{align}
%\SigL&=-\X_+\Rot^{ }_+\JL^{-1}
%,&
%\SigR&=\X_-\Rot^{ }_-\JR^{-1}.\notag
%\end{align}
$\bigSigL=-\bigX_+\bigRot^{ }_+\bigJL^{-1}$ and $\bigSigR=\bigX_-\bigRot^{ }_-\bigJR^{-1}$.

\

%\noindent{\bf Approach 2:} 
\noindent{\bf Step 2:} 
We discretize \eqref{ParaSyst} directly by approximating $\sol $ by $\nsol $ and $\sol _x$ by $\wide{W}$. %(the same technique used for the heat equation in \cite{Berg20126846} FOR DIRICHLET). 
We obtain
\begin{subequations}
\label{Approach2}
\begin{align}
\label{Approach2a}
\begin{split}
\nsol _t%+(I_N\otimes \hypR)\nsol 
+(D_1\otimes \paraA)\nsol -(D_1\otimes \paraE)\wide{W}&=\fh +(\PH ^{-1}e_0 \otimes\sigltop )(\HL{\nsol }_0 +\GL \wide{W}_0 -\gl)\\
&\hspace{23.5pt}+(\PH ^{-1}e_N \otimes\sigrtop )(\HR{\nsol }_N+\GR \wide{W}_N -\gr),
\end{split}\\
\label{Approach2b}
\begin{split}
(I_N\otimes \paraE)\wide{W}
-(D_1\otimes \paraE)\nsol &=(\PH ^{-1}e_0 \otimes\siglbot )(\HL{\nsol }_0 +\GL \wide{W}_0 -\gl)\\
&+(\PH ^{-1}e_N \otimes\sigrbot )(\HR{\nsol }_N+\GR \wide{W}_N -\gr).%,\\\nsol (0)&=f_h.
\end{split}
\end{align}
\end{subequations}
If $
\bigSigL=[\sigltop^T,\siglbot^T]^T$ and $\bigSigR=[\sigrtop^T,\sigrbot^T]^T$, then
\eqref{Approach2}
  is %nothing but 
  a permutation of  \eqref{Approach1}.

\

%\noindent{\bf Approach 3:}
\noindent{\bf Step 3:}
The scheme in \eqref{Approach2} is a  system of differential algebraic equations, %(and even if we would have a time-independent case the system is still twice as big as it needs to be). In order to solve it, 
so
we would like to
%When solving the time dependent problem
%We 
cancel the variable $\wide{W}$ %  in  \eqref{Approach2}, 
and get a system of ordinary differential equations instead.
%
%Replacing $(D_1\otimes \paraE)\wide{W}$ in the first equation of \eqref{Approach2} by $\Dbar=(D_1\otimes I_m)$ times the expression for $(I_N\otimes \paraE)\wide{W}$ from the second equation yields
Multiplying %the second equation in \eqref{Approach2} 
 \eqref{Approach2b} 
by $\Dbar=(D_1\otimes I_\sizepara)$ and adding the result to %the first equation in \eqref{Approach2}, 
\eqref{Approach2a}, 
yields
%Adding $\Dbar=(D_1\otimes I_m)$ times  the second equation of \eqref{Approach2} to the first equation, yields
\begin{align}
\begin{split}\nsol _t
+(D_1\otimes \paraA)\nsol -(D_1
^2\otimes \paraE)\nsol =\fh &+(\PH ^{-1}e_0 \otimes\sigltop+D_1\PH ^{-1}e_0 \otimes\siglbot )\wide\chi_0\\
&+(\PH ^{-1}e_N \otimes\sigrtop +D_1\PH ^{-1}e_N \otimes\sigrbot)\wide\chi_N,
\end{split}
\notag
\end{align}
where
\begin{align}
\label{chi}
\wide\chi_0=\HL{\nsol }_0 +\GL \wide{W}_0 -\gl,&&
\wide\chi_N=\HR{\nsol }_N+\GR \wide{W}_N -\gr.
\end{align}
Next, using the properties in \eqref{SBPprop1}, 
together with the fact that  $\PH $ is diagonal, % such that $e_0^T\PH ^{-1}e_N=e_N^T\PH ^{-1}e_0=0$,
we  compute
\begin{align}D_1\PH ^{-1}e_0%=\PH ^{-1}(E_N-E_0-Q^T)\PH ^{-1}e_0=\PH ^{-1}(-\p e_0-D_1^Te_0)
=\PH ^{-1}(-\p I_N-D_1^T)e_0,&&
D_1\PH ^{-1}e_N=\PH ^{-1}(\p I_N-D_1^T)e_N,\notag\end{align}
where $\p$ is the scalar $\p=e_0^T\PH ^{-1}e_0=e_N^T\PH ^{-1}e_N$ given in \eqref{defp}.
This yields
\begin{align}
\label{StillSomeW}
\begin{split}\nsol _t
+(D_1\otimes \paraA)\nsol -(D_1
^2\otimes \paraE)\nsol %&=F+(\PH ^{-1}e_0 \otimes(\sigltop-\p\siglbot)+\PH ^{-1}D_1^Te_0 \otimes-\siglbot )\chi_0\\&\hspace{23.5pt}+(\PH ^{-1}e_N \otimes(\sigrtop+\p\sigrbot) +\PH ^{-1}D_1^Te_N \otimes-\sigrbot)\chi_N\\
&=\fh +\Pbar^{-1}(e_\noll \otimes(\sigltop-\p\siglbot)-D_1^Te_\noll \otimes\siglbot )\wide\chi_\noll\\
&\hspace{27pt}+\Pbar^{-1}(e_N \otimes(\sigrtop+\p\sigrbot) -D_1^Te_N \otimes\sigrbot)\wide\chi_N,
\end{split}
\end{align}
where $\Pbar=(\PH \otimes I_\sizepara)$.
%However, %unless $\GL$ and $\GR$ are zero (Dirichlet boundary conditions), 
However, the boundary condition deviations $\wide\chi_0$ and $\wide\chi_N$ %in \eqref{chi} 
still contain $\wide{W}$, so we
multiply %the second equation in \eqref{Approach2}
\eqref{Approach2b} by $(e_0^T\otimes I_\sizepara)$ and $(e_N^T\otimes I_\sizepara)$, respectively, to get
\begin{align}
\label{2ndWideRel}
%\begin{split}
 \paraE \wide{W}_0
- \paraE(\Dbar \nsol )_0%&=(e_0^T\PH ^{-1}e_0 \otimes\siglbot )(\HL{\nsol }_0 +\GL \wide{W}_0 -\gl)\\&+(e_0^TP^{-1}e_N \otimes\sigrbot )(\HR{\nsol }_N+\GR \wide{W}_N -\gr)\\&=(\p \otimes\siglbot )(\HL{\nsol }_0 +\GL \wide{W}_0 -\gl)\\
%&=\p \siglbot (\HL{\nsol }_0 +\GL \wide{W}_0 -\gl)\\
&=\p \siglbot \wide\chi_0,&%\\
 \paraE \wide{W}_N
- \paraE(\Dbar \nsol )_N%&=(e_N^TP^{-1}e_0 \otimes\siglbot )(\HL{\nsol }_0 +\GL \wide{W}_0 -\gl)\\&+(e_N^TP^{-1}e_N \otimes\sigrbot )(\HR{\nsol }_N+\GR \wide{W}_N -\gr)\\
%&=\p \sigrbot (\HR{\nsol }_N+\GR \wide{W}_N -\gr)\\
&=\p \sigrbot \wide\chi_N.
%\end{split}
\end{align}
Next, we need %$\wide\xi_{0,N}$, given already in \eqref{xi} but restated here for convenience: 
boundary condition deviations without $\wide{W}$, and define
\begin{align*}
\wide\xi_\noll&=\HL{\nsol }_0 +\GL (\Dbar \nsol )_0 -\gl,&\wide\xi_N&= \HR{\nsol }_N+\GR (\Dbar \nsol )_N -\gr.
\end{align*}
Recall that %$\GL=\factorL\paraE$ and $\GR=\factorR\paraE$. 
$\GG_{L,R}=\factor_{L,R}\paraE$. 
Using \eqref{2ndWideRel}, we can now relate $\wide\xi_{0,N}$ above to $\wide\chi_{0,N}$ in \eqref{chi} as
\begin{align}
\label{xichirel}
\wide\xi_\noll%&=\HL{\nsol }_0 +\GL (\Dbar \nsol )_0 -\gl\\
%&=\HL{\nsol }_0 +\factorL\paraE (\Dbar \nsol )_0 -\gl\\
%&=\HL{\nsol }_0 +\factorL\left(\paraE \wide{W}_0-\p \siglbot (\HL{\nsol }_0 +\GL \wide{W}_0 -\gl)\right) -\gl\\
%&=(\HL{\nsol }_0 +\GL \wide{W}_0 -\gl)-\p \factorL\siglbot (\HL{\nsol }_0 +\GL \wide{W}_0 -\gl)\\
%&=(I_{n_+}-\p \factorL\siglbot )(\HL{\nsol }_0 +\GL \wide{W}_0 -\gl)\\
&=(I_{\sizebig_+}-\p \factorL\siglbot )\wide\chi_0
,&
\wide\xi_N%&= (\HR{\nsol }_N+\GR (\Dbar \nsol )_N -\gr)\\
%&= (\HR{\nsol }_N+\factorR\paraE (\Dbar \nsol )_N -\gr)\\
%&= (I_{n_-}-\p\factorR \sigrbot )(\HR{\nsol }_N+\GR \wide{W}_N -\gr) \\
&= (I_{\sizebig_-}-\p\factorR \sigrbot )\wide\chi_N,
\end{align}
where $I_{\sizebig_+}$ and $I_{\sizebig_-}$ are identity matrices of sizes corresponding to the number of positive (${\sizebig_+}$) and negative  ($\sizebig_-$) eigenvalues of $\bigA$, respectively.
%{\kom NO, assume that $\siglbot=\paraE\widehat{\siglbot}$ and $\sigrbot=\paraE\widehat{\sigrbot}$ such that\begin{align*}\xi_\noll&=(I_{n_+}-\p \GL\widehat{\siglbot})\chi_0,&\xi_N&= (I_{n_-}-\p\GR\widehat{\sigrbot} )\chi_N\end{align*}}
Inserting $\wide\chi_{0,N}$ from \eqref{xichirel} into \eqref{StillSomeW} allows us
to finally write the scheme without any $\wide{W}$
terms and we obtain \eqref{FinalWide},
with
\begin{align}
\label{WIDEtaumuRel}
\begin{split}
\wide\penI_\noll&=(\sigltop-\p\siglbot)
(I_{\sizebig_+}-\p \factorL\siglbot )^{-1},\hspace{53pt}\wide\penS_\noll=-\siglbot(I_{\sizebig_+}-\p \factorL\siglbot )^{-1},
\\
\wide\penI_N&=(\sigrtop+\p\sigrbot)
(I_{\sizebig_-}-\p\factorR \sigrbot )^{-1},\hspace{40pt}
\wide\penS_N=-\sigrbot(I_{\sizebig_-}-\p\factorR \sigrbot )^{-1}.
\end{split}
\end{align}
From Step 1 and 2 we know that
\begin{align*}
\left[\begin{array}{c}\sigltop\\\siglbot\end{array}\right]=-\left[\begin{array}{c}\bigX_1\bigRot^{ }_+\bigJL^{-1}\\\bigX_2\bigRot^{ }_+\bigJL^{-1}\end{array}\right],&&\left[\begin{array}{c}\sigrtop\\\sigrbot\end{array}\right]=\left[\begin{array}{c}\bigX_3\bigRot^{ }_-\bigJR^{-1}\\\bigX_4\bigRot^{ }_-\bigJR^{-1}\end{array}\right],
\end{align*}
where $\bigX_{1,2,3,4}$ are given in \eqref{X1234}.
% $\bigX_{1,2}$ and $\bigX_{3,4}$ have dimensions $\sizepara\times \sizebig_+$  and $\sizepara\times \sizebig_-$, respektively.
Inserting the above relation into \eqref{WIDEtaumuRel},
we obtain the penalty parameters presented in \eqref{WIDEtauRel}.

\section{Motivation of Proposition~\ref{propJK}}
\label{qex}

In Proposition~\ref{propJK} we claim that the inverse of
 $\compAsnok = \compA+\pert E_j$ is
$J/\pert+K_j$. We motivate this  below, for $j=0$.
%Here we give a sketch  of a proof. % of Proposition~\ref{propJK}.
%
First,  we name the parts of $ \compA$  and present the %necessary 
structure of $K_0$ as
\begin{align*}
 \compA=\left[\begin{array}{cc}\Atop&\Avec^T\\
 \Avec&\Adel\end{array}\right],&& K_0=\left[\begin{array}{cc}\noll&\stympadnolla^T\\
 \stympadnolla&\Kdel\end{array}\right].
\end{align*}
%where $\Avec$ and $\stympadnolla=[0, 0, \hdots, 0]^T$ are vectors of length $N$.
Since $\compA$ consists of consistent difference operators, it does not "see" constants.  %therefore we know that $\compA\ett=\NOLL$. 
Therefore, 
 $\compA J=\stornollmatris$ (since  $J$ is an all-ones matrix) % $\compA J=\compA \ett\ett^T=\stornollmatris$
  and
  $\Avec+\Adel\stympadetta=\stympadnolla$, where
$\stympadetta=[1, 1, \hdots, 1]^T$. % be two vectors of length $N+1$ and $N$, respectively. 
 Moreover, due to the special structure of $K_0$, we know that $E_0K_0=\stornollmatris$.
%
%Recall that $J$ was an all-ones matrix, this means that $J=\ett\ett^T$.
%We are now ready to give the proof:
Thus we have
\begin{align*}
%\compAsnok(J/\pert+K_0)&=
(\compA+\pert E_0)(J/\pert+K_0)%=\underbrace{\compA J}_{=0}/\pert+\compA K_0+ E_0J+\pert \underbrace{E_0K_0}_{=0}\\
=\compA K_0+ E_0J
 =\left[\begin{array}{cc}1&\Avec^T\Kdel+\stympadetta^T\\
 \stympadnolla&\litenidentitet\end{array}\right]=\storidentitet.
\end{align*}
The simplest possible example is
the %second order %accurate %compact 
%operator denoted "{\it\hspace{-1pt}narrow $(2,0)$}" 
narrow   (2,0)
order operator in Table~\ref{qconst}, %mimicking the second derivative %{\kom and Jing? HITTAR EJ}, 
specified by
\begin{align}
\label{2nd}
D_2=\frac{1}{h^2}\left[\begin{array}{ccccc}0&0\\1&-2&1\\&\ddots&\ddots&\ddots\\&&1&-2&1\\&&&0&0\end{array}\right],
&&\text{with}&&
\PH =h\left[\begin{array}{ccccc}1/2\\&1\\
&&\ddots\\&&&1\\&&&&1/2\end{array}\right].
\end{align}
Using \eqref{SBPprop2} and the above structure of $K_0$, respectively, we obtain
\begin{align}
\notag
S=\frac{1}{h}\left[\begin{array}{ccccc}-1&1\\\times&\times&\times&\times&\times\\
\vdots&\vdots&\vdots&\vdots&\vdots\\\times&\times&\times&\times&\times\\&&&-1&1\end{array}\right],&&
K_0=h\left[\begin{array}{ccccc}0&0&\hdots&0&0\\0&1&\hdots&1&1\\%0&1&\hdots&2&2\\
\vdots&\vdots&\ddots&\vdots&\vdots\\0&1&\hdots&N-1&N-1\\0&1&\hdots&N-1&N\end{array}\right].
\end{align}
The interior rows of $S$ are marked by $\times$'s  
because they are unknown. 
Next, we compute 
\begin{align}
\Msnok^{-1}&=S\compAsnok ^{-1}S^T
=S\left(J/\pert+K_0\right)S^T
=\frac{1}{h}\left[\begin{array}{ccccc}1&\times&\hdots&\times&0\\\times&\times&\hdots&\times&\times\\
\vdots&\vdots&&\vdots&\vdots\\\times&\times&\hdots&\times&\times\\0&\times&\hdots&\times&1\end{array}\right]\notag.
\end{align}
%Since $J$ is constant, 
Just as $\compA$,
 the difference stencils in the first and last row of $S$ do not "see" $J$. Therefore, the corner elements of $\Msnok^{-1}$ only depend on $K_0$ and are  independent of $\pert$. We conclude that 
when $M$ is singular and $S$ is non-singular
the constants in \eqref{qdefparts} can be computed using \eqref{qdefpartsSNOK}.
%{\kom -- even in the limit $\pert\to0$ such that $\compAsnok \to \compA$}.
In this case we get $\q_0=\q_N=1/h$ and $\q_c=0$, such that $\q=1/h$. 

In addition to the operator discussed above, %\eqref{2nd}, %in  \eqref{2nd}, 
we %will in this paper 
use the diagonal-norm operators  in \cite{Mattsson2004503}.
For the higher order accurate operators found in \cite{Mattsson2004503}, $\q$ varies with $N$. 
For example, for the narrow  (4,2) order accurate operator, we have
\begin{align*}
\begin{array}{cccc}
N&\q_0h&\q_ch&\q h\\
\hline
   8&   3.986350339808304  & 0.000041141179445 &  3.986391480987749\\
   9&   3.986350339313381 &  0.000002953803786  & 3.986353293117168\\
  10&   3.986350339310830 &  0.000000212073570  & 3.986350551384400\\
  11&   3.986350339310817  & 0.000000015226197  & 3.986350354537014\\
  12&   3.986350339310817   &0.000000001093192  & 3.986350340404008\\
\end{array}
\end{align*}
Since the values do not differ so much, it is practical to use the largest value, the one for $N=8$, regardless of the number of grid points.

%%%%%%%%%%%%%%%%%%%%%%%%%%%%%%%%%%%%%%%%%%%%%%%%%%%%%%%%%%%%%%%%%%%%%%%%%%%%%%%%%%%%%%%%%%%%%%%%%%%%%%%%%%%%%%%%%%%%%

\bibliographystyle{plain}      
%\bibliography{OptPen}   %
\bibliography{OptPenShort}   %

\end{document}